\tikzset{snake it/.style={decorate, decoration=snake}}
\tikzset{snake it/.style={decorate, decoration=snake}}
\newtheorem{theorem}{Theorem}[section]
\newtheorem{lemma}[theorem]{Lemma}
\newtheorem{proposition}[theorem]{Proposition}
\theoremstyle{definition}
\newtheorem{definition}[theorem]{Definition}
\newtheorem{corollary}[theorem]{Corollary}
\newtheorem{example}[theorem]{Example}
\theoremstyle{remark}
\newtheorem{remark}[theorem]{Remark}
\numberwithin{equation}{section}
\newcommand{\aspace}{\\[0.5ex]}
\let\@wraptoccontribs\wraptoccontribs
\begin{document}
 \title{Existence of the Map $det^{S^3}$}\thanks{Appendix in collaboration with  Tony Passero}
 
\author{Steven R. Lippold}
\address{Department of Mathematics and Statistics, Bowling Green State University, Bowling Green, OH 43403}

\email{steverl@bgsu.edu}

\author{Mihai D. Staic}
\address{Department of Mathematics and Statistics, Bowling Green State University, Bowling Green, OH 43403 }
\address{Institute of Mathematics of the Romanian Academy, PO.BOX 1-764, RO-70700 Bu\-cha\-rest, Romania.}

\email{mstaic@bgsu.edu}
\subjclass{Primary 15A15; Secondary 05C65, 05C70, 15A75}

 \begin{abstract}
In this paper we show the existence of a nontrivial linear map $det^{S^3}:V_d^{\otimes\binom{3d}{3}}\to k$ with the property that $det^{S^3}(\otimes_{1\leq i<j<k\leq 3d}(v_{i,j,k}))=0$ if there exists $1\leq x<y<z<t\leq 3d$ such that $v_{x,y,z}=v_{x,y,t}=v_{x,z,t}=v_{y,z,t}$.   This gives a partial answer to a conjecture form \cite{S3}.  As an application, we use the map $det^{S^3}$ to study those $d$-partitions of the complete hypergraph $K_{3d}^3$ that have zero Betti numbers. 
We also discuss algebraic and combinatorial properties of a map $det^{S^r}:V_d^{\otimes\binom{rd}{r}}\to k$ which generalizes the determinant map, the map $det^{S^2}$ from \cite{dets2}, and  $det^{S^3}$.
 \end{abstract}

\keywords{Determinant, partitions of hypergraphs, Betti numbers}

 \maketitle

\section{Introduction}
 Let $V_d$ be a vector space of dimension $d$. The determinant is the unique, up to a constant, nontrivial linear map $det:V_d^{\otimes d}\to k$ with the property that $det(\otimes_{1\leq i\leq d}(v_i))=0$ when there is $1\leq x<y\leq d$ such that $v_x=v_y$. It is well known that the existence and uniqueness of the determinant map is equivalent with the fact that  $dim_k(\Lambda_{V_d}[d])=1$ (where $\Lambda_{V_d}[d]$ is the component of degree $d$ of the exterior algebra).

A generalization of the determinant was discussed in  \cite{dets2}, where it was shown that for every $d\geq 1$ there exists a nontrivial linear map $det^{S^2}:V_d^{\otimes\binom{2d}{2}}\to k$ with the property that $det^{S^2}(\otimes_{1\leq i<j\leq 2d}(v_{i,j}))=0$ if there is $1\leq x<y<z\leq 2d$ such that $v_{x,y}=v_{x,z}=v_{y,z}$. Further, when $d=2$ in \cite{sta2}, and $d=3$ in \cite{edge}, it was shown that $det^{S^2}$ is unique, up to a constant. As an application of the map $det^{S^2}$ to combinatorics,  it was shown in \cite{dets2} that if $(\Gamma_1,\Gamma_2,\dots,\Gamma_d)$ is an edge $d$-partition of the complete graph $K_{2d}$ then $(\Gamma_1,\Gamma_2,\dots,\Gamma_d)$ is cycle-free if and only if $det^{S^2}(f_{(\Gamma_1,\Gamma_2,\dots,\Gamma_d)})\neq 0$ (see Theorem \ref{thcfp} for more details). 

The main idea behind the map $det^{S^2}$ is a construction $\Lambda^{S^2}_{V_d}$, which was introduced in \cite{sta2} as a generalization of the exterior algebra $\Lambda_{V_d}$. Just like the determinant  $det$ is associated to $\Lambda_{V_d}[d]$ (the component of degree $d$ of the exterior algebra $\Lambda_{V_d}$), the map $det^{S^2}$ is associated to  $\Lambda^{S^2}_{V_d}[2d]$ (the component of degree $2d$ of a graded vector space $\Lambda^{S^2}_{V_d}$). 
Moreover, the existence and uniqueness of the map $det^{S^2}$ is equivalent with $dim_k(\Lambda^{S^2}_{V_d}[2d])=1$. 


In \cite{S3}, another generalization of the exterior algebra was introduced, the graded vector space $\Lambda^{S^3}_{V_d}$. It was shown that $dim_k\left(\Lambda^{S^3}_{V_2}[6]\right)=1$ and that there exists a unique, up to a constant, nontrivial linear map $det^{S^3}:V_2^{\otimes20}\to k$ with the property that
\begin{eqnarray}
det^{S^3}(\otimes_{1\leq i<j<k\leq 6}(v_{i,j,k}))=0, \label{detS3}
\end{eqnarray} 
if there exists $1\leq x<y<z<t\leq 6$ such that $v_{x,y,z}=v_{x,y,t}=v_{x,z,t}=v_{y,z,t}$. Concerning the results of \cite{S3}, it was conjectured that $dim_k\left(\Lambda^{S^3}_{V_d}[3d]\right)=1$ for all $d\geq 1$. 

In this paper, we show that for all $d\geq 1$ there exists a nontrivial linear map $det^{S^3}:V_d^{\otimes\binom{3d}{3}}\to k$  with the property that $det^{S^3}(\otimes_{1\leq i<j<k\leq 3d}(v_{i,j,k}))=0$ if there exists $1\leq x<y<z<t\leq 3d$ such that $v_{x,y,z}=v_{x,y,t}=v_{x,z,t}=v_{y,z,t}$. 
In particular this proves that $dim_k\left(\Lambda^{S^3}_{V_d}[3d]\right)\geq 1$, which partially answers the conjecture from \cite{S3}. As an application we use the map $det^{S^3}$ to give a characterization of those  $d$-partitions of the complete $3$-uniform hypergraph $K_{3d}^3$ that have zero Betti numbers. 
In \cite{S3} a more general construction $\Lambda^{S^r}_{V_d}$ was introduced and some connections to $d$-partitions of the complete $r$-uniform hypergraph $K_{rd}^r$ were discussed. It was conjectured that $dim_k(\Lambda^{S^r}_{V_d}[rd])=1$. Related to this question, we  construct a linear map $det^{S^r}:V_d^{\otimes\binom{rd}{r}}\to k$  and establish some of its algebraic and combinatorial properties.


The paper is organized  as follows:  In order to provide some context for our results and make the presentation self contained, in Section \ref{section2}, we recall certain properties of the determinant and the $det^{S^2}$ map. We present from \cite{dets2} the main construction  and some properties  of $det^{S^2}$. We also recall results from \cite{S3}, which include the definitions of $\Lambda^{S^3}_{V_d}$ and $\Lambda^{S^r}_{V_d}$. Lastly, we discuss briefly $d$-partitions of the $r$-uniform complete hypergraph $K_{n}^r$ and present a connection to $\Lambda^{S^r}_{V_d}$.

In Section  \ref{section3}, 
we start by introducing a collection of vector equations $\mathcal{E}_{m,n}$, and use the matrix associated to the system given by $\mathcal{E}_{m,n}$ for all $1\leq m<n<3d$ to define a map $det^{S^3}:V_d^{\otimes\binom{3d}{3}}\to k$. 
We show $det^{S^3}$ has property \ref{detS3}, 
and we introduce an element $E^{(3)}_d\in V_d^{\otimes\binom{3d}{3}}$, which we use to prove that $det^{S^3}$ is nontrivial. In particular, we obtain that $dim_k\left(\Lambda^{S^3}_{V_d}[3d]\right)\geq1$. Lastly, we prove that $det^{S^3}$  is invariant under the action of the group $SL_d(k)$.

In Section \ref{SectionCombS3} we show that if $\mathcal{P}=(\mathcal{H}_1, \mathcal{H}_2,\dots, \mathcal{H}_d)$ is a 
$d$-partition of the $3$-uniform hypergraph $K_{3d}^3$, and $\omega_{\mathcal{P}}$ is its corresponding element in   $V_d^{\otimes\binom{3d}{3}}$ then $det^{S^3}(\omega_{\mathcal{P}})\neq 0$ if and only if $\mathcal{P}$ is homogeneous and the Betti numbers $b_2(\mathcal{H}_i)$ are zero for all $1\leq i\leq d$. Along the way, we obtain an interesting combinatorial identity (Lemma \ref{lemmaCombrd}) which essentially is equivalent with the fact that the Euler characteristic of certain CW complexes is zero. 

In Section  \ref{section4}, we extend our construction for all $r\geq 1$ to give linear maps  $det^{S^r}:V_d^{\otimes\binom{rd}{r}}\to k$ with the property that $det^{S^r}(\otimes_{1\leq i_1<i_2<\ldots<i_r\leq rd}(v_{i_1,\ldots,i_r}))=0$ if there exists $1\leq x_1<x_2<\ldots<x_{r+1}\leq rd$ such that \[v_{x_1,x_2,\ldots,x_r}=v_{x_1,\ldots,x_{r-1},x_{r+1}}=v_{x_1,\ldots,x_{r-2},x_r,x_{r+1}}=\ldots=v_{x_1,x_3,\ldots,x_r,x_{r+1}}=v_{x_2,x_3,\ldots,x_{r+1}}.\]
When $r=1$, $r=2$ and $r=3$ one recovers the maps $det$, $det^{S^2}$ and  $det^{S^3}$ respectively.  In general it is not not clear if the maps $det^{S^r}$ are nontrivial. If true, this would show that $dim_k\left(\Lambda^{S^r}_{V_d}[rd]\right)\geq 1$. The uniqueness of the map $det^{S^r}$ (with the above property) is an open question.  We also discuss a combinatorial property of the map $det^{S^r}$ in terms of  homogeneous $d$-partitions of the complete $r$-uniform hypergraph $K_{rd}^r$ that have zero Betti numbers.  

In the Appendix we present some results obtained in MATLAB which check that the map $det^{S^r}$ is nontrivial for certain particular values of $(r,d)$. 
We also propose an element $E^{(r)}_d \in V_d^{\otimes\binom{rd}{r}}$ which we believe it is nonzero in $\Lambda^{S^r}_{V_d}[rd]$. The results in the Appendix were obtained in collaboration with Tony Passero. 

\section{Preliminaries}
\label{section2}

In this section we establish a few conventions and notations that will be used throughout this paper. In order to make the  presentation self contained we recall some notations and constructions from \cite{edge,sta2,dets2,S3}.

In this paper,  $k$ is a field of characteristic zero,  and $\otimes=\otimes_k$. We take $V_d$ to be a $k$-vector space of dimension $d$ with basis $\mathcal{B}_d=\{e_1,\dots,e_d\}$. Finally,  we denote $\binom{n}{k}=\frac{n!}{k!(n-k)!}$. 
\subsection{The maps $det$ and $det^{S^2}$}
Recall that the determinant map is the unique, up to a constant, nontrivial linear map $det:V_d^{\otimes d}\to k$ with the property that $det(\otimes_{1\leq i\leq d}(v_i))=0$ if there exists $1\leq x<y\leq d$ such that $v_x=v_y$. In particular, the determinant induces a unique, up to a constant, nontrivial map $\Lambda_{V_d}[d]\to k$ (where $\Lambda_{V_d}[d]$ is the component of degree $d$ of the  exterior algebra $\Lambda_{V_d}$). It is well known that the existence and uniqueness of the map $det$ with the above properties is equivalent to the fact that $dim_k\left(\Lambda_{V_d}[d]\right)=1$.

In  \cite{sta2}, a generalization $\Lambda^{S^2}_{V_d}$ of $\Lambda_{V_d}$ was introduced. The exact construction of the graded vector space $\Lambda^{S^2}_{V_d}$ is not important at this point, but it was conjectured in \cite{sta2} that $dim_k(\Lambda^{S^2}_{V_d}[2d])=1$. This conjecture is equivalent with the existence of a unique, up to a constant, nontrivial linear map 
$$det^{S^2}:V_d^{\otimes\binom{2d}{2}}\to k,$$ with the property that $det^{S^2}(\otimes_{1\leq i<j\leq 2d}(v_{i,j}))=0$ if there is $1\leq x<y<z\leq 2d$ such that $v_{x,y}=v_{x,z}=v_{y,z}$. The conjecture was checked in the cases $d=2$ in \cite{sta2}, and $d=3$ in \cite{edge}. Moreover, it was shown in \cite{dets2} that for every $d\geq 1$ there exists a nontrivial linear map $det^{S^2}:V_d^{\otimes\binom{2d}{2}}\to k$ with the property that $det^{S^2}(\otimes_{1\leq i<j\leq 2d}(v_{i,j}))=0$ if there is $1\leq x<y<z\leq 2d$ such that $v_{x,y}=v_{x,z}=v_{y,z}$. In particular, this shows that $dim_k\left(\Lambda^{S^2}_{V_d}[2d]\right)\geq1$ for all $d\geq1$. The uniqueness of the map $det^{S^2}$ is still an open question for $d\geq 4$. 

We recall the construction of $det^{S^2}$ from \cite{dets2}. Let $v_{i,j}\in V_d$ with $1\leq i<j\leq 2d$. For $1\leq m\leq 2d$, define the vector equations $\mathcal{E}_m$ by
\begin{equation}\label{dets2Rel}
    \sum_{s=1}^{m-1}(-1)^{s-1}v_{s,m}\lambda_{s,m}+\sum_{t=m+1}^{2d}(-1)^tv_{m,t}\lambda_{m,t}=0.
\end{equation}
As noted in \cite{dets2}, the equations $\mathcal{E}_s$ for $1\leq s\leq 2d$ are not independent, but they satisfy the relation
\begin{equation}\label{dets2Eq}
    \sum_{s=1}^{2d}(-1)^s\mathcal{E}_s=0.
\end{equation}

In particular this means that the equation $\mathcal{E}_{2d}$ is a consequence of the first $2d-1$ equations $\mathcal{E}_l$, where $1\leq l\leq 2d-1$. And so, when we study the system consisting of equations  $\mathcal{E}_{l}$ for $1\leq l\leq 2d$, we can ignore the equation $\mathcal{E}_{2d}$. 
 
Let $\mathcal{S}_{2d}((v_{i,j})_{1\leq i<j\leq 2d})$ be the system given by the vector equations $\mathcal{E}_l$, where $1\leq l<2d$ and $A_{2d}((v_{i,j})_{1\leq i<j\leq 2d})$ be the associated matrix. Notice that $A_{2d}((v_{i,j})_{1\leq i<j\leq 2d})$ is a square matrix of size $d(2d-1)\times d(2d-1)$.   We define the map $det^{S^2}:V_d^{\otimes\binom{2d}{2}}\to k$ given by
$$det^{S^2}(\otimes_{1\leq i<j\leq 2d}(v_{i,j}))=det(A_{2d}((v_{i,j})_{1\leq i<j\leq 2d})).$$ 
It was shown in \cite{dets2} that $det^{S^2}(\otimes_{1\leq i<j\leq 2d}(v_{i,j}))=0$ if there is $1\leq x<y<z\leq 2d$ such that $v_{x,y}=v_{x,z}=v_{y,z}$. Moreover,  if $E_d=(\otimes_{1\leq i<j\leq 2d}(e_{i,j}))$ is given by
\[e_{i,j}=\begin{cases}
e_t, \ \textrm{if $i<2t-1$, $i$ is odd and $j=2t-1$},\\
e_t, \ \textrm{if $i<2t$, $i$ is even and $j=2t$},\\
e_t, \ \textrm{if $i=2t-1$, $j>2t-1$ and $j$ is even},\\
e_t, \ \textrm{if $i=2t$, $j>2t$ and $j$ is odd},
\end{cases}\]
then $det^{S^2}(E_d)\neq0$.
Some geometrical properties of the map $det^{S^2}$ were studied in \cite{dets2,sv}

In addition to the geometric properties, there are combinatorial properties of $det^{S^2}$ related to edge $d$-partitions of the complete graph $K_{2d}$. It is known from \cite{edge} that the set of edge $d$-partitions of the complete graph $K_{2d}$ is in bijection with a basis of the vector space  $V_d^{\otimes \binom{2d}{2}}$. More precisely, to an edge $d$-partition $(\Gamma_1,\dots, \Gamma_d)$ of $K_{2d}$ one  associates the element $f_{(\Gamma_1,\dots, \Gamma_d)}=\otimes_{1\leq i<j\leq 2d}(f_{i,j})\in V_d^{\otimes \binom{2d}{2}}$ determined as follows: if the edge $(i,j)$ belongs to the subgraph $\Gamma_x$ of $K_{2d}$ then we define $f_{i,j}=e_x$. One can easily show that the set 
$$\mathcal{G}_{{\mathcal B}_d}^{S^2}[2d]=\{f_{(\Gamma_1,\dots, \Gamma_d)}| (\Gamma_1,\dots, \Gamma_d) ~{\rm is}~{\rm a}~d{\rm-partition}~{\rm of}~K_{2d}\},$$ is a basis for $V_d^{\otimes \binom{2d}{2}}$. Finally, we say that an edge $d$-partition $(\Gamma_1,\dots, \Gamma_d)$ of $K_{2d}$ is cycle-free if each graph  $\Gamma_x$  is cycle free.  The following result  was proven in \cite{dets2}.
\begin{theorem}\label{thcfp}
 Let $(\Gamma_1,\dots, \Gamma_d)$ be an edge $d$-partition of $K_{2d}$, and $f_{(\Gamma_1,\dots, \Gamma_d)}$ be the associated element in $V_d^{\otimes \binom{2d}{2}}$. Then, $(\Gamma_1,\dots, \Gamma_d)$ is cycle-free if and only if $det^{S^2}(f_{(\Gamma_1,\dots, \Gamma_d)})\neq 0$.
\end{theorem}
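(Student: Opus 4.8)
The plan is to prove both directions by relating the determinant of $A_{2d}(f_{(\Gamma_1,\dots,\Gamma_d)})$ to a product of ordinary determinants, one for each color class $\Gamma_x$. The key observation is that the system $\mathcal{S}_{2d}$ is block-decomposable along colors: if one writes each $v_{i,j} = e_x$ (where $(i,j)\in\Gamma_x$) in coordinates, then the unknown $\lambda_{i,j}$ appears in the $x$-th component of equation $\mathcal{E}_m$ only if $(i,j)$ is an edge of $\Gamma_x$ incident to $m$. Hence, after reindexing rows of $A_{2d}$ by pairs $(x,m)$ with $x\in\{1,\dots,d\}$ and $m\in\{1,\dots,2d-1\}$, and columns by pairs $(x, (i,j))$ with $(i,j)\in\Gamma_x$, the matrix becomes block-diagonal with one block $A^{(x)}$ per color $x$. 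So $\det(A_{2d}(f_{(\Gamma_1,\dots,\Gamma_d)})) = \pm\prod_{x=1}^{d}\det(A^{(x)})$, and the theorem reduces to: $\det(A^{(x)})\neq 0$ if and only if $\Gamma_x$ is cycle-free.

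For this reduction to make sense dimensionally, I would first record that in any $d$-partition of $K_{2d}$ the number of edges is $\binom{2d}{2} = d(2d-1)$, so on average each $\Gamma_x$ has $2d-1$ edges; but a priori the sizes vary. The cleanest route is to observe that $A^{(x)}$ is (up to signs) the incidence-type matrix of the graph $\Gamma_x$ on the vertex set $\{1,\dots,2d\}$ with the last row (vertex $2d$) deleted — rows indexed by vertices $1,\dots,2d-1$, columns indexed by edges of $\Gamma_x$, entry $\pm1$ when the vertex is an endpoint of the edge and $0$ otherwise. This is precisely the \emph{reduced incidence matrix} of $\Gamma_x$ (with a sign convention making it behave like an oriented incidence matrix, which is legitimate over a field of characteristic zero). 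A standard fact from algebraic graph theory is that the reduced incidence matrix of a graph $G$ on $n$ vertices is square and nonsingular if and only if $G$ is a spanning tree; more generally, for it to be a square matrix we need $|E(\Gamma_x)| = 2d-1$, and a square reduced incidence matrix is nonsingular iff the graph is a forest with the right edge count, i.e.\ a spanning tree on $\{1,\dots,2d\}$, equivalently a connected cycle-free graph on all $2d$ vertices.

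So the argument splits into two parts. \emph{If $(\Gamma_1,\dots,\Gamma_d)$ is cycle-free:} each $\Gamma_x$ is a forest; counting edges, $\sum_x |E(\Gamma_x)| = d(2d-1)$ while each forest on (at most) $2d$ vertices has at most $2d-1$ edges, forcing every $\Gamma_x$ to have exactly $2d-1$ edges, hence to be a spanning tree of $K_{2d}$; then each $A^{(x)}$ is a nonsingular reduced incidence matrix of a tree, so the product — and thus $det^{S^2}$ — is nonzero. \emph{Conversely, if some $\Gamma_x$ contains a cycle:} I need to produce a nonzero element of the kernel of $A^{(x)}$ (or show the block is not square, also giving determinant zero by the conventions). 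A cycle in $\Gamma_x$ gives the usual circulation vector — assign $\pm1$ alternately around the cycle, $0$ elsewhere — which lies in the kernel of the full (un-reduced) incidence matrix, and one checks it survives the deletion of the last row; this forces $\det(A^{(x)}) = 0$, hence $det^{S^2} = 0$.

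\textbf{Main obstacle.} The delicate point is bookkeeping the signs. Equation \eqref{dets2Rel} carries the alternating factors $(-1)^{s-1}$ and $(-1)^t$, so the entries of $A^{(x)}$ are not literally $0/1$ but $0/\pm1$ with a prescribed pattern; one must verify that this pattern is a genuine oriented-incidence pattern (equivalently, that each $2\times 2$ ``edge'' contribution has opposite signs in its two vertex-rows, possibly after row rescaling by $\pm1$), so that the classical incidence-matrix rank results apply verbatim. I expect this to be a short but careful sign-chase; the structural claims (block-diagonality, edge-count forcing, cycle gives kernel vector) are then routine. A secondary subtlety is the case where $\Gamma_x$ has a cycle \emph{and} consequently some other $\Gamma_y$ has fewer than $2d-1$ edges, making $A^{(y)}$ non-square; here I simply note that $A_{2d}$ is globally square, so any local non-squareness is compensated elsewhere, and the vanishing of $\det(A^{(x)})$ (interpreted as the relevant maximal-minor obstruction) still yields $det^{S^2} = 0$ — but the cleanest statement is that a cycle in some class produces a genuine linear dependence among the columns of the full square matrix $A_{2d}$, which is what I would ultimately write down.
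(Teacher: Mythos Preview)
Your approach is correct, and it is exactly the $r=2$ specialization of the method the paper uses for the analogous results at $r\geq 3$ (Lemma~\ref{lemmaPreH2} and Theorem~\ref{ThComb}): after the row/column sign rescaling you anticipate---multiply row $\mathcal{E}_m$ by $(-1)^m$ and column $\lambda_{i,j}$ by $(-1)^{i+j}$---the matrix $A_{2d}$ becomes block-diagonal with blocks equal to the reduced oriented incidence matrices of the $\Gamma_x$, i.e.\ the matrix of $\bigoplus_x \partial_1^x$. Note that the paper itself does not prove Theorem~\ref{thcfp} but only cites it from \cite{dets2}; your write-up would serve as a self-contained proof in the spirit of the paper's own $r=3$ argument, phrased in the elementary language of incidence matrices and spanning trees rather than boundary maps and Betti numbers.
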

One should notice that a homogeneous $d$-partition $(\Gamma_1,\dots, \Gamma_d)$ is cycle free if and only if  $b_1(\Gamma_i)=0$ for all $1\leq i\leq d$, where $b_1(\Gamma)$ is the first Betti number of $\Gamma$. 

\subsection{The Graded Vector Space $\Lambda^{S^3}_{V_d}$}
In \cite{S3}, we introduced $\Lambda^{S^3}_{V_d}$, another generalization of the exterior algebra.   We recall now that construction.  

Let $\mathcal{T}^{S^3}_{V_d}$ be the graded vector space
$\mathcal{T}^{S^3}_{V_d}=\displaystyle\bigoplus_{n\geq0}\mathcal{T}^{S^3}_{V_d}[n],$
where \[\mathcal{T}^{S^3}_{V_d}[n]=V_d^{\otimes\binom{n}{3}}.\] 
Define  $\mathcal{E}_{V_d}^{S^3}[n]\subseteq\mathcal{T}^{S^3}_{V_d}[n]$ as the subspace linearly generated by simple tensors $\otimes_{1\leq i<j<k\leq n}(v_{i,j,k})$ with the property that there is $1\leq x<y<z<t\leq n$ such that $v_{x,y,z}=v_{x,y,t}=v_{x,z,t}=v_{y,z,t}$. We define $\Lambda^{S^3}_{V_d}=\displaystyle\bigoplus_{n\geq0}\Lambda^{S^3}_{V_d}[n],$ where
\[\Lambda^{S^3}_{V_d}[n]=\frac{\mathcal{T}^{S^3}_{V_d}[n]}{\mathcal{E}_{V_d}^{S^3}[n]}.\]
We summarize some relevant facts concerning $\Lambda^{S^3}_{V_2}[n]$ from \cite{S3} in the following proposition.
\begin{proposition}[\cite{S3}]\label{S3Facts}
 Let $V_2$ be a vector space of dimension 2. Then,
 \begin{enumerate}
     \item $dim_k\left(\Lambda^{S^3}_{V_2}[6]\right)=1$.
     \item There exists a unique, up to a constant, nontrivial linear map $det^{S^3}:V_2^{\otimes20}\to k$ with the property that $det^{S^3}(\otimes_{1\leq i<j<k\leq 6} (v_{i,j,k}))=0$ if there is $1\leq x<y<z<t\leq 6$ such that $v_{x,y,z}=v_{x,y,t}=v_{x,z,t}=v_{y,z,t}$.
     \item If $T:V_2\to V_2$, then $det^{S^3}(\otimes_{1\leq i<j<k\leq 6} (T(v_{i,j,k})))=det(T)^{10}det^{S^3}(\otimes_{1\leq i<j<k\leq 6} (v_{i,j,k}))$. In particular, $det^{S^3}$ is invariant under the action of $SL_2(k)$.
 \end{enumerate}
\end{proposition}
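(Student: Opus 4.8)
The plan is to prove the three assertions in order, obtaining (2) and (3) more or less formally from (1). For (1), I would establish separately that $dim_k(\Lambda^{S^3}_{V_2}[6])\geq1$ and that $dim_k(\Lambda^{S^3}_{V_2}[6])\leq1$. For the lower bound the natural strategy is the one that is later used for $det^{S^3}$ in general, which in turn imitates $det$ and $det^{S^2}$: to the $20$ vectors $(v_{i,j,k})_{1\leq i<j<k\leq 6}$ one attaches a homogeneous system of vector equations in scalar unknowns $\lambda_{i,j,k}$, organized so that (i) after discarding the dependent equations the system is square --- for $d=2$ a $20\times20$ scalar system --- and (ii) whenever one of the degeneracies $v_{x,y,z}=v_{x,y,t}=v_{x,z,t}=v_{y,z,t}$ holds, the coefficient matrix is singular. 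Since the column indexed by $\lambda_{i,j,k}$ involves only the vector $v_{i,j,k}$ and does so linearly, the determinant of that matrix is multilinear in the $v_{i,j,k}$ and hence, by the universal property of the tensor product, defines a linear functional $det^{S^3}$ on $V_2^{\otimes20}$; property (ii) makes it vanish on $\mathcal{E}^{S^3}_{V_2}[6]$. To see it is not identically zero one evaluates it on a carefully chosen family of basis vectors (for instance the $d=2$ instance of the element $E^{(3)}_d$) and checks that the resulting numerical matrix is nonsingular, which is feasible because $d=2$ keeps the size small.

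For the upper bound one must show that every simple tensor built from the basis $\{e_1,e_2\}$ --- equivalently, every assignment of $e_1$ or $e_2$ to each of the $20$ triples of $\{1,\dots,6\}$ --- is congruent modulo $\mathcal{E}^{S^3}_{V_2}[6]$ to a fixed scalar multiple of one reference tensor. Concretely this amounts to computing the rank of the (very sparse) matrix whose rows list the spanning relations of $\mathcal{E}^{S^3}_{V_2}[6]$ and verifying that it equals $2^{20}-1$, a finite computation that can be carried out with computer assistance, as in \cite{S3}. A more conceptual route I would attempt is to set up a rewriting procedure: expand the defining relations $v_{x,y,z}=v_{x,y,t}=v_{x,z,t}=v_{y,z,t}$ with common value $e_1+e_2$ (and their multilinear consequences) in order to trade any basis tensor for tensors with fewer monochromatic tetrahedra, combine this with the antisymmetry-type identities obtained by permuting the four indices $x,y,z,t$, and prove the process terminates in a canonical form.

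Parts (2) and (3) then follow formally. By construction $\Lambda^{S^3}_{V_2}[6]=\mathcal{T}^{S^3}_{V_2}[6]/\mathcal{E}^{S^3}_{V_2}[6]$, so a linear map $V_2^{\otimes20}\to k$ that annihilates every degenerate simple tensor is exactly a linear functional on $\Lambda^{S^3}_{V_2}[6]$; since that space is one-dimensional by (1), such a map exists, is nontrivial, and is unique up to a constant, which is (2). For (3), observe that for $T:V_2\to V_2$ the operator $T^{\otimes20}$ preserves $\mathcal{E}^{S^3}_{V_2}[6]$, since it carries a degenerate simple tensor whose common value on the tetrahedron $\{x,y,z,t\}$ is $w$ to one whose common value is $T(w)$; hence $T^{\otimes20}$ descends to an endomorphism of the line $\Lambda^{S^3}_{V_2}[6]$, that is, to multiplication by a scalar $c(T)$. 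The assignment $T\mapsto c(T)$ is multiplicative because $(ST)^{\otimes20}=S^{\otimes20}\circ T^{\otimes20}$, it is a polynomial in the entries of $T$ that is homogeneous of degree $20$, and $c(\mathrm{Id})=1$; since every multiplicative polynomial function on $GL_2(k)$ is a power of the determinant and $det$ is homogeneous of degree $2$ on $2\times2$ matrices, matching degrees forces $c(T)=det(T)^{10}$, and restriction to $SL_2(k)$ yields the stated invariance.

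The step I expect to be the main obstacle is the upper bound in (1): proving that the relation space $\mathcal{E}^{S^3}_{V_2}[6]$ has codimension exactly $1$, and not merely at most something larger. Once a correctly calibrated system of vector equations has been written down the lower bound and the existence half of (2) are routine, and (3) is bookkeeping; but pinning down the precise rank of $\mathcal{E}^{S^3}_{V_2}[6]$ requires either the computer verification used in \cite{S3} or a delicate termination argument for the rewriting system generated by the defining relations.
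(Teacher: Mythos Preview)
The paper does not prove this proposition: it is stated in the preliminaries as a summary of results from \cite{S3}, with no proof given. There is therefore no argument in the present paper to compare yours against directly.

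That said, the paper does later prove, for general $d$, the existence half of (2) and the transformation law (3) (Lemmas~\ref{universality} and~\ref{nontrivial}, Proposition~\ref{InvarProps}). Your plan for the lower bound in (1) and for existence in (2) --- build the square system of vector equations, take the determinant of its coefficient matrix, and evaluate on the element $E^{(3)}_2$ --- is precisely the paper's strategy for general $d$, specialized to $d=2$. Your identification of the upper bound in (1) as the real obstacle is also accurate: neither this paper nor your sketch supplies an argument for codimension exactly one beyond invoking the computation in \cite{S3}, and indeed the paper leaves uniqueness of $det^{S^3}$ open for all $d\geq 3$.

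Your argument for (3) does differ from the paper's. You first use (1) to see that $T^{\otimes 20}$ acts on the one-dimensional quotient $\Lambda^{S^3}_{V_2}[6]$ by a scalar $c(T)$, and then identify $c(T)=det(T)^{10}$ via the fact that multiplicative polynomial functions on $GL_2$ are powers of the determinant. Proposition~\ref{InvarProps} instead observes directly that $A_{3d}((T(v_{i,j,k})))=I_{3,d,T}\cdot A_{3d}((v_{i,j,k}))$, where $I_{3,d,T}$ is block diagonal with $\binom{3d-1}{2}$ copies of $T$, so taking determinants produces the factor $det(T)^{\binom{3d-1}{2}}$, which is $det(T)^{10}$ when $d=2$. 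The paper's route is more elementary and, importantly, does not depend on (1); your route is conceptually clean but hinges on the one-dimensionality, which is exactly the computer-assisted step.
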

It was conjectured in \cite{S3} that $dim_k\left(\Lambda^{S^3}_{V_d}[3d]\right)=1$ for any $d\geq1$. This is equivalent with the fact that there is a unique, up to a constant, nontrivial linear map 
$$det^{S^3}:V_d^{\otimes\binom{3d}{3}}\to k,$$ with the property that $det^{S^3}(\otimes_{1\leq i<j<k\leq 3d}(v_{i,j,k}))=0$ if there is $1\leq x<y<z<t\leq 3d$ such that $v_{x,y,z}=v_{x,y,t}=v_{x,z,t}=v_{y,z,t}$. 

One of the  main results of this paper is the existence of such a nontrivial map $det^{S^3}$ for all $d\geq 1$. In particular, this implies that $dim_k\left(\Lambda^{S^3}_{V_d}[3d]\right)\geq 1$. The uniqueness of $det^{S^3}$ is still an open question. 

\subsection{The Graded Vector Space $\Lambda^{S^r}_{V_d}$}

 Next, we recall the construction for $\Lambda^{S^r}_{V_d}$ for $r\geq 1$. This construction was first described in \cite{S3} as a generalization of the exterior algebra, $\Lambda^{S^2}_{V_d}$, and $\Lambda^{S^3}_{V_d}$. 
 
 Let $\mathcal{T}^{S^r}_{V_d}[n]=V_d^{\otimes\binom{n}{r}},$
 and let the graded vector space $\mathcal{T}^{S^r}_{V_d}$ be defined as $$\mathcal{T}^{S^r}_{V_d}=\displaystyle\bigoplus_{n\geq0}\mathcal{T}^{S^r}_{V_d}[n].$$ 
Take $\mathcal{E}^{S^r}_{V_d}[n]\subseteq\mathcal{T}^{S^r}_{V_d}[n]$ be the subspace linearly generated by simple tensors $$(\otimes_{1\leq i_1<i_2<\ldots<i_r\leq n}(v_{i_1,\ldots,i_r}))\in \mathcal{T}^{S^r}_{V_d}[n],$$ with the property that there exists $1\leq x_1<x_2<\ldots<x_{r+1}\leq n$ such that 
\[v_{x_1,x_2,\ldots,x_r}=v_{x_1,\ldots,x_{r-1},x_{r+1}}=v_{x_1,\ldots,x_{r-2},x_r,x_{r+1}}=\ldots=v_{x_1,x_3,\ldots,x_r,x_{r+1}}=v_{x_2,x_3,\ldots,x_{r+1}}.\]
 Then $\Lambda^{S^r}_{V_d}$ is defined as
 $\Lambda^{S^r}_{V_d}=\bigoplus_{n\geq0}\Lambda^{S^r}_{V_d}[n],$ where \[\Lambda^{S^r}_{V_d}[n]=\frac{\mathcal{T}^{S^r}_{V_d}[n]}{\mathcal{E}^{S^r}_{V_d}[n]}.\] 
One can see that when $r=1$ we get the exterior algebra $\Lambda_{V_d}$, and when $r=2$ or $r=3$ we recover the constructions mentioned above.  It was conjectured in \cite{S3} that $dim_k\left(\Lambda^{S^r}_{V_d}[rd]\right)=1$. In Section \ref{section4} we give a possible path towards showing that  $dim_k\left(\Lambda^{S^r}_{V_d}[rd]\right)\geq1$. 
More precisely, we construct a linear map 
$$det^{S^r}:V_d^{\otimes\binom{rd}{r}}\to k,$$ with the property that $det^{S^r}(\otimes_{1\leq i_1<i_2<\ldots<i_r\leq rd}(v_{i_1,\ldots,i_r}))=0$ if there exists $1\leq x_1<x_2<\ldots<x_{r+1}\leq rd$ such that 
\[v_{x_1,x_2,\ldots,x_r}=v_{x_1,\ldots,x_{r-1},x_{r+1}}=v_{x_1,\ldots,x_{r-2},x_r,x_{r+1}}=\ldots=v_{x_1,x_3,\ldots,x_r,x_{r+1}}=v_{x_2,x_3,\ldots,x_{r+1}}.\]
It is not clear if this map is always nontrivial. 

In the Appendix we check that the map $det^{S^r}$ is nontrivial for certain values $4\leq r\leq 8$ and $2\leq d\leq 9$. Those results were obtained with MATLAB.

\subsection{Partition of Hypergraphs}

Lastly, we recall from \cite{bretto, S3} a few definitions and results about hypergraphs.  
\begin{definition} A hypergraph $\mathcal{H}=(V, E)$ consists of two finite sets  $V = \{v_1, v_2, \dots , v_n\}$ called  the set of vertices, and  $E=\{E_1, E_2, ... , E_m\}$  a family of subsets of $V$ called the hyperedges of $\mathcal{H}$. 

If every hyperedge of $\mathcal{H}$ is of size $r$, then $\mathcal{H}$ is called an $r$-uniform hypergraph.

For $2 \leq r \leq n$, we define the complete
$r$-uniform hypergraph to be the hypergraph $K^r_n= (V, E)$ for which $V=\{1,2,\dots ,n\}$, and  $E$
is the family of all subsets of $V$ of size $r$.
\end{definition} 
Notice that a $2$-uniform hypergraph is nothing else but a graph, and $K_n^2$ is the complete graph $K_n$.
\begin{definition} Let $\mathcal{H}$ be a hypergraph and $d\geq 2$ be a natural number. A $d$-partition of $\mathcal{H}$ is an ordered collection $\mathcal{P}=(\mathcal{H}_1,\mathcal{H}_2,...,\mathcal{H}_d)$ of sub-hypergraphs $\mathcal{H}_i$ of  $\mathcal{H}$ such that:\\
(i) $V(\mathcal{H}_i)=V(\mathcal{H})$ for all $1\leq i \leq d$,  \\
(ii) $E(\mathcal{H}_i)\cap E(\mathcal{H}_j)=\emptyset$ for all $i\neq j$, \\
(iii) $\cup_{i=1}^dE(\mathcal{H}_i)=E(\mathcal{H})$. 
\end{definition}
Notice that if $\mathcal{H}$ is a graph then we recover the definition of a $d$-partition of a graph from \cite{edge}. We denote by  $\mathcal{P}_d(K_n^r)$ the set of $d$-partition of the complete $r$-uniform hypergraph $K_n^r$.

Next we recall some notations from \cite{S3}. Let $\mathcal{B}_d=\{e_1,\dots,e_d\}$ be a basis for $V_d$. We define
\[\mathcal{G}_{\mathcal{B}_d}^{S^r}[n]=\{\otimes_{1\leq i_1<\dots<i_r\leq n}(v_{i_1,\dots,i_r})\in\mathcal{T}^{S^r}_{V_d}[n]\;\vert \; v_{i_1,\dots,i_r}\in  \mathcal{B}_d\}.\]
It is obvious that $\mathcal{G}_{\mathcal{B}_d}^{S^r}[n]$ is a basis for 
$\mathcal{T}^{S^r}_{V_d}[n]$, and so its image in $\Lambda^{S^r}_{V_d}[n]$ will be a system of generators. 

Moreover, there exists a bijection between the elements in $\mathcal{G}_{\mathcal{B}_d}^{S^r}[n]$ and the set of $d$-partitions of the hypergraph $K_n^r$. Indeed, to every element  $\omega=\otimes_{1\leq i_1<\dots<i_r\leq n}(v_{i_1,\dots,i_r})\in \mathcal{G}_{\mathcal{B}_d}^{S^r}[n]$ we associate the partition $\mathcal{P}_{\omega}=(\mathcal{H}_1,\dots,\mathcal{H}_d)$ where the hyperedge $\{i_1,\dots,i_r\}\in \mathcal{H}_i$ if and only if $v_{i_1,\dots,i_r}=e_i$. It is easy to see that this map is a bijection between
$\mathcal{G}_{\mathcal{B}_d}^{S^r}[n]$ and $\mathcal{P}_d(K_n^r)$.  For a $d$-partition $\mathcal{P}\in \mathcal{P}_d(K_n^r)$ the corresponding element in $\mathcal{G}_{\mathcal{B}_d}^{S^r}[n]$ will be denoted by $\omega_{\mathcal{P}}$.

\section{Existence of a Map $det^{S^3}$}\label{dets3Construct}
\label{section3}
The purpose of this section is to give for every vector space $V_d$ of dimension $d$, a nontrivial linear map $det^{S^3}:V_d^{\otimes\binom{3d}{3}}\to k$ with the property $det^{S^3}(\otimes_{1\leq i<j<k\leq 3d}(v_{i,j,k}))=0$ if there is $1\leq x<y<z<t\leq 3d$ such that $v_{x,y,z}=v_{x,y,t}=v_{x,z,t}=v_{y,z,t}$. In the case $d=2$, such a map was given in \cite{S3}. We also show that the map $det^{S^3}$ is invariant by the action of  $SL_d(k)$. 
\subsection{Construction of a Map $det^{S^3}$} We will follow a similar idea to the one from \cite{dets2}. 
 Let $v_{i,j,k}\in V_d$ for $1\leq i<j<k\leq 3d$. For each $1\leq m<n\leq 3d$, we define a vector equation $\mathcal{E}_{m,n}$ as
\begin{equation}\label{MainEq}
 \sum_{r=1}^{m-1}(-1)^{r-1}\lambda_{r,m,n}v_{r,m,n}+\sum_{s=m+1}^{n-1}(-1)^s\lambda_{m,s,n}v_{m,s,n}+\sum_{t=n+1}^{3d}(-1)^{t+1}\lambda_{m,n,t}v_{m,n,t}=0.
\end{equation}
By abuse of notation, we also denote the left side of Equation (\ref{MainEq}) as $\mathcal{E}_{m,n}$. Denote by $\mathcal{S}((v_{i,j,k})_{1\leq i<j<k\leq 3d})$ the system consisting of equations $\mathcal{E}_{m,n}$ for all $1\leq m<n\leq 3d$ and by  $A((v_{i,j,k})_{1\leq i<j<k\leq 3d})$ the associated matrix.

\begin{remark}\label{MatrixNotationRemark}
In order to talk about the associated matrix, one has to introduce an ordering for the set of indices $\{(i,j,k)~|~1\leq i<j<k\leq 3d\}$ and $\{(m,n)~|~1\leq m<n\leq 3d\}$ for the columns and rows of 
$A((v_{i,j,k})_{1\leq i<j<k\leq 3d})$ respectively. The order we choose will change our construction only up to a sign, so it's not really relevant. To make things simpler we consider the dictionary order on both these sets. 
\end{remark}

First, notice that  the vector equations defined in (\ref{MainEq}) are not linearly independent. More precisely we have the following.
\begin{lemma}\label{eqRelationsLemma}
    Let $1\leq n\leq 3d$. Then we have the relation $\mathcal{R}_n$. 
    \begin{equation}\label{relations}
     \mathcal{R}_n: \quad\quad\quad \sum_{m=1}^{n-1}(-1)^m\mathcal{E}_{m,n}+\sum_{p=n+1}^{3d}(-1)^{p+1}\mathcal{E}_{n,p}=0.
    \end{equation}
\end{lemma}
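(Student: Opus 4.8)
The plan is to verify the relation $\mathcal{R}_n$ by expanding each equation $\mathcal{E}_{m,n}$ and $\mathcal{E}_{n,p}$ appearing in (\ref{relations}) according to its definition (\ref{MainEq}), and then showing that every monomial $\lambda_{a,b,c}v_{a,b,c}$ with $n \in \{a,b,c\}$ occurs exactly twice with opposite signs, while every monomial with $n \notin \{a,b,c\}$ does not occur at all. First I would observe that each equation $\mathcal{E}_{m,n}$ in the sum has $n$ as one of its two ``fixed'' lower indices, and each $\mathcal{E}_{n,p}$ has $n$ as one of its fixed indices; hence every variable $\lambda_{a,b,c}v_{a,b,c}$ produced by expanding the left side of $\mathcal{R}_n$ satisfies $n \in \{a,b,c\}$, which disposes of the second claim immediately.

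For the cancellation, I would fix a triple $a < b < c$ with $n \in \{a,b,c\}$ and split into the three cases $n = a$, $n = b$, $n = c$. In the case $n = c$ (so $a < b < n$), the monomial $\lambda_{a,b,n}v_{a,b,n}$ arises from $\mathcal{E}_{a,n}$ via its middle sum (with $s = b$) and from $\mathcal{E}_{b,n}$ via its first sum (with $r = a$); tracking the signs, $\mathcal{E}_{a,n}$ contributes with coefficient $(-1)^a \cdot (-1)^b$ and $\mathcal{E}_{b,n}$ with coefficient $(-1)^b \cdot (-1)^{a-1}$, and these are negatives of each other. In the case $n = a$ (so $n < b < c$), the monomial comes from $\mathcal{E}_{n,b}$ via its third sum (with $t = c$) and from $\mathcal{E}_{n,c}$ via its second sum (with $s = b$); comparing $(-1)^b \cdot (-1)^{c+1}$ against $(-1)^{c+1} \cdot (-1)^b$ — wait, one must instead read these off as $\mathcal{E}_{n,b}$ carrying the outer sign $(-1)^{b+1}$ and inner sign $(-1)^{c+1}$ versus $\mathcal{E}_{n,c}$ carrying outer sign $(-1)^{c+1}$ and inner sign $(-1)^b$ — so I would just carefully compute the total exponents and check they differ by one. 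The case $n = b$ (so $a < n < c$) is symmetric: the monomial appears in $\mathcal{E}_{a,n}$ through its third sum (with $t = c$) and in $\mathcal{E}_{n,c}$ through its first sum (with $r = a$), again with opposite total signs.

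The only real bookkeeping obstacle is the sign arithmetic: the equations (\ref{MainEq}) use three different sign conventions $(-1)^{r-1}$, $(-1)^s$, $(-1)^{t+1}$ on the three blocks, and the relation (\ref{relations}) uses $(-1)^m$ and $(-1)^{p+1}$, so one must be meticulous about which block of which equation a given monomial falls into and add the two exponents mod $2$. I expect the cleanest write-up is to make a small table of the six ways a monomial $\lambda_{a,b,c}v_{a,b,c}$ (one row for each position of $n$ among $a,b,c$, and for each such row two columns for the two equations it comes from) listing the resulting sign, and then observe each pair sums to zero. Since every monomial is accounted for by exactly one such pair and the two entries cancel, the entire left-hand side of $\mathcal{R}_n$ vanishes identically as a formal linear combination of the symbols $\lambda_{a,b,c}v_{a,b,c}$, which is precisely the claimed relation. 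No deeper idea is needed; this is an elementary (if sign-sensitive) verification analogous to (\ref{dets2Eq}) in the $S^2$ case.
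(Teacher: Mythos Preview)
Your approach is correct and essentially identical to the paper's: both observe that only monomials $\lambda_{a,b,c}v_{a,b,c}$ with $n\in\{a,b,c\}$ appear, split into the three cases according to the position of $n$, identify in each case the two equations contributing that monomial, and verify the signs cancel. The paper carries out the three sign checks explicitly (and cleanly), so in a final write-up you should replace the ``wait'' self-correction and the promise to ``carefully compute'' with the actual computed exponents, as the paper does.
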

\begin{proof} In order to prove that relation $\mathcal{R}_n$ holds we will show that all terms that appear in $\mathcal{R}_n$ come in pair and with opposite sign. First notice that the vector $v_{x,y,z}$ appears in the relation $\mathcal{R}_n$ if  and only if $n\in\{x,y,z\}$. 

Case 1: If $1\leq i<j<n $ then the vector $v_{i,j,n}$ appears in equation $(-1)^{i}\mathcal{E}_{i,n}$ and equation $(-1)^{j}\mathcal{E}_{j,n}$ with coefficient $(-1)^i(-1)^{j}\lambda_{i,j,n}$ and $(-1)^j(-1)^{i-1}\lambda_{i,j,n}$ respectively, and so the two occurrence cancel each other. 

Case 2: If $1\leq i<n<j\leq 3d$ then the vector $v_{i,n,j}$ appears in equation $(-1)^{i}\mathcal{E}_{i,n}$ and equation $(-1)^{j+1}\mathcal{E}_{n,j}$ with coefficient $(-1)^i(-1)^{j+1}\lambda_{i,n,j}$ and $(-1)^{j+1}(-1)^{i-1}\lambda_{i,n,j}$ respectively, and so the two occurrence cancel each other. 

Case 3: If $ n<i<j\leq 3d$ then the vector $v_{n,i,j}$ appears in equation $(-1)^{i+1}\mathcal{E}_{n,i}$ and equation $(-1)^{j+1}\mathcal{E}_{n,j}$ with coefficient $(-1)^{i+1}(-1)^{j+1}\lambda_{n,i,j}$ and 
$(-1)^{j+1}(-1)^{i}\lambda_{n,i,j}$  respectively, and so the two occurrence cancel each other. 

\end{proof}

Lemma \ref{eqRelationsLemma} shows that when studying the system $\mathcal{S}((v_{i,j,k})_{1\leq i<j<k\leq 3d})$ it suffices to consider only the vector equations $\mathcal{E}_{m,n}$, where $1\leq m<n<3d$. Indeed, by Equation (\ref{relations}) we know that for each $1\leq n<3d$ we have
\begin{equation}\label{3dIsolated}
    (-1)^{3d}\mathcal{E}_{n,3d}=\sum_{m=1}^{n-1}(-1)^m\mathcal{E}_{m,n}+\sum_{p=n+1}^{3d-1}(-1)^{p+1}\mathcal{E}_{n,p}.
\end{equation}
So, we need not study $\mathcal{E}_{m,3d}$ for all $1\leq m<3d$ to study $\mathcal{S}((v_{i,j,k})_{1\leq i<j<k\leq 3d})$. 

\begin{definition}\label{mainDefs}
 Let $v_{i,j,k}\in V_d$ for all $1\leq i<j<k\leq 3d$. Let $\mathcal{S}_{3d}((v_{i,j,k})_{1\leq i<j<k\leq 3d})$ be the system consisting of the equations $\mathcal{E}_{m,n}$ for $1\leq m<n<3d$. Define $A_{3d}((v_{i,j,k})_{1\leq i<j<k\leq 3d})$ to be the $d\binom{3d-1}{2}\times d\binom{3d-1}{2}$ square matrix corresponding to the system $\mathcal{S}_{3d}((v_{i,j,k})_{1\leq i<j<k\leq 3d})$.

 Define a map $det^{S^3}:V_d^{\binom{3d}{3}}\to k$ by 
     \begin{equation}
         det^{S^3}((v_{i,j,k})_{1\leq i<j<k\leq 3d})=det(A_{3d}((v_{i,j,k})_{1\leq i<j<k\leq 3d})).
     \end{equation} 
\end{definition}

\begin{remark}
 From properties of determinants, it immediately follows that $det^{S^3}$ is multilinear. By abuse of notation, we will also write $det^{S^3}$ for the linear map $$det^{S^3}:V_d^{\otimes\binom{3d}{3}}\to k,$$ determined by 
 \begin{equation}
     det^{S^3}(\otimes_{1\leq i<j<k\leq 3d}(v_{i,j,k}))=det(A_{3d}((v_{i,j,k})_{1\leq i<j<k\leq 3d})).
 \end{equation}
\end{remark}

Here is the main property of the map $det^{S^3}$. 
\begin{lemma}\label{universality}
 The map $det^{S^3}$ has the property that $det^{S^3}(\otimes_{1\leq i<j<k\leq 3d}(v_{i,j,k}))=0$ if there exists $1\leq x<y<z<t\leq 3d$ such that $v_{x,y,z}=v_{x,y,t}=v_{x,z,t}=v_{y,z,t}.$
\end{lemma}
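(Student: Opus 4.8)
The plan is to mimic the argument for $det^{S^2}$ from \cite{dets2}: the vanishing of $det^{S^3}$ under the hypothesis will follow from exhibiting a nontrivial element in the kernel (equivalently, a nontrivial linear dependence among the \emph{columns}) of the matrix $A_{3d}((v_{i,j,k}))$ whenever there is a quadruple $1\leq x<y<z<t\leq 3d$ with $v_{x,y,z}=v_{x,y,t}=v_{x,z,t}=v_{y,z,t}$. Fix such a quadruple and call this common vector $w$. The key observation is that the four ``faces'' of the tetrahedron on vertices $\{x,y,z,t\}$ — the index triples $(x,y,z)$, $(x,y,t)$, $(x,z,t)$, $(y,z,t)$ — should be assigned the unknowns $\lambda_{x,y,z}=\lambda_{x,y,t}=\lambda_{x,z,t}=\lambda_{y,z,t}=1$ (up to signs dictated by the conventions in Equation~(\ref{MainEq})), with all other unknowns $\lambda_{i,j,k}$ set to zero. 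I would then check that this candidate assignment satisfies every equation $\mathcal{E}_{m,n}$ in the system.

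First I would record which equations $\mathcal{E}_{m,n}$ can possibly receive a nonzero contribution: only those with $(m,n)$ a $2$-subset of $\{x,y,z,t\}$, since every term of $\mathcal{E}_{m,n}$ that survives must have all three of its indices among $\{x,y,z,t\}$ and contain $\{m,n\}$. There are $\binom{4}{2}=6$ such pairs, namely $(x,y),(x,z),(x,t),(y,z),(y,t),(z,t)$. For each, exactly two of the four chosen triples contain that pair: e.g. $\mathcal{E}_{x,y}$ sees $v_{x,y,z}$ and $v_{x,y,t}$, both equal to $w$; $\mathcal{E}_{z,t}$ sees $v_{x,z,t}$ and $v_{y,z,t}$, again both $w$; and so on. So each of these six equations reduces to $(\pm w)\mp w = 0$, \emph{provided the two signs coming from the $(-1)^{\,\cdot}$ conventions in (\ref{MainEq}) are opposite}. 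Verifying that sign cancellation is the crux: it is a direct but slightly fiddly parity check, completely analogous to the three-case sign bookkeeping already carried out in the proof of Lemma~\ref{eqRelationsLemma}, and I would organize it the same way — split into cases according to the position of the ``extra'' index (the one among $\{x,y,z,t\}$ not in $\{m,n\}$) relative to $m$ and $n$ (before both, between, or after both), and in each case read off the two relevant exponents from the three sums in (\ref{MainEq}) and confirm they differ by an odd integer. This parity check is the main obstacle; everything else is formal.

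Once the candidate vector is shown to solve $\mathcal{S}((v_{i,j,k}))$ — and hence a fortiori the subsystem $\mathcal{S}_{3d}((v_{i,j,k}))$ obtained by discarding the equations $\mathcal{E}_{m,3d}$, which by Equation~(\ref{3dIsolated}) are consequences of the rest — I must confirm it is a \emph{nonzero} solution of the square system $A_{3d}$. This is immediate: the four unknowns $\lambda_{x,y,z},\lambda_{x,y,t},\lambda_{x,z,t},\lambda_{y,z,t}$ are set to $\pm 1\neq 0$, and each of these four triples has largest entry at most $t\leq 3d$; at least one, say the triple containing neither occurrence of index $3d$ — in fact since $x<y<z<t$ and $t\le 3d$, the triple $(x,y,z)$ has all entries strictly less than $3d$ unless $z=3d$, but even when $t=3d$ the triple $(x,y,z)$ still has $z<3d$ — corresponds to a genuine column of $A_{3d}$, so the solution vector restricted to the columns of $A_{3d}$ is not identically zero. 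Therefore $\det\big(A_{3d}((v_{i,j,k}))\big)=0$, i.e. $det^{S^3}(\otimes(v_{i,j,k}))=0$, which is the claim. I would close by remarking that this reproduces, for $d=2$, the vanishing property of the map $det^{S^3}$ from Proposition~\ref{S3Facts}.
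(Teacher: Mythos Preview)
Your approach is correct and essentially identical to the paper's: both exhibit a nontrivial linear dependence among the four columns $c_{x,y,z}, c_{x,y,t}, c_{x,z,t}, c_{y,z,t}$, which the paper writes down explicitly as $(-1)^t c_{x,y,z} - (-1)^z c_{x,y,t} + (-1)^y c_{x,z,t} - (-1)^x c_{y,z,t} = 0$ (this is precisely the sign pattern your case analysis would produce). One unnecessary detour: your worry about whether the four triples correspond to ``genuine columns of $A_{3d}$'' is moot, since $A_{3d}$ is obtained from $A$ by deleting \emph{rows} (the equations $\mathcal{E}_{m,3d}$), not columns, so all $\binom{3d}{3}$ column indices remain and the kernel element is automatically nonzero.
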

\begin{proof}
 Suppose there exists $1\leq x<y<z<t\leq 3d$ such that $v_{x,y,z}=v_{x,y,t}=v_{x,z,t}=v_{y,z,t}.$ 
Consider the matrix $A=A((v_{i,j,k})_{1\leq i<j<k\leq 3d})$. From the definition of $\mathcal{E}_{m,n}$, we have that
 \begin{align*}\label{BigMatrix}
  A=\begin{pmatrix}
                                         \dots&0&\dots&0&\dots&0&\dots&0&\dots\\
                                         \vdots&\vdots&\vdots&\vdots&\vdots&\vdots&\vdots&\vdots&\vdots\\
                                         \dots&(-1)^{z+1}v_{x,y,z}&\dots&(-1)^{t+1}v_{x,y,t}&\dots&0&\dots&0&\dots\\
                                         \vdots&\vdots&\vdots&\vdots&\vdots&\vdots&\vdots&\vdots&\vdots\\
                                         \dots&(-1)^yv_{x,y,z}&\dots&0&\dots&(-1)^{t+1}v_{x,z,t}&\dots&0&\dots\\
                                         \vdots&\vdots&\vdots&\vdots&\vdots&\vdots&\vdots&\vdots&\vdots\\
                                         \dots&0&\dots&(-1)^yv_{x,y,t}&\dots&(-1)^zv_{x,z,t}&\dots&0&\dots\\
                                         \vdots&\vdots&\vdots&\vdots&\vdots&\vdots&\vdots&\vdots&\vdots\\
                                         \dots&(-1)^{x-1}v_{x,y,z}&\dots&0&\dots&0&\dots&(-1)^{t+1}v_{y,z,t}&\dots\\		
                                         \vdots&\vdots&\vdots&\vdots&\vdots&\vdots&\vdots&\vdots&\vdots\\
                                         \dots&0&\dots&(-1)^{x-1}v_{x,y,t}&\dots&0&\dots&(-1)^zv_{y,z,t}&\dots\\
                                         \vdots&\vdots&\vdots&\vdots&\vdots&\vdots&\vdots&\vdots&\vdots\\
                                         \dots&0&\dots&0&\dots&(-1)^{x-1}v_{x,z,t}&\dots&(-1)^{y-1}v_{y,z,t}&\dots\\
                                         \vdots&\vdots&\vdots&\vdots&\vdots&\vdots&\vdots&\vdots&\vdots\\
                                        \end{pmatrix},
 \end{align*}
where we listed in order the rows corresponding to $\mathcal{E}_{x,y}$, $\mathcal{E}_{x,z}$, $\mathcal{E}_{x,t}$, $\mathcal{E}_{y,x}$, $\mathcal{E}_{y,t}$ and $\mathcal{E}_{z,t}$. 
Denote the column corresponding the $\lambda_{i,j,k}$ as $c_{i,j,k}$. Since $v_{x,y,z}=v_{x,y,t}=v_{x,z,t}=v_{y,z,t}$, we have that
\begin{equation}\label{S3Cols}
    (-1)^tc_{x,y,z}-(-1)^zc_{x,y,t}+(-1)^yc_{x,z,t}-(-1)^xc_{y,z,t}=0.
\end{equation}
Further, we have the columns of $A_{3d}((v_{i,j,k})_{1\leq i<j<k\leq 3d})$ still satisfy Equation (\ref{S3Cols}). Indeed, since $\mathcal{S}_{3d}((v_{i,j,k})_{1\leq i<j<k\leq 3d})$ is obtained from $\mathcal{S}((v_{i,j,k})_{1\leq i<j<k\leq 3d})$ by excluding the vector equations $\mathcal{E}_{m,3d}$ for $1\leq m<3d$, we have the columns of $A_{3d}((v_{i,j,k})_{1\leq i<j<k\leq 3d})$ still satisfy Equation (\ref{S3Cols}). So the columns of $A_{3d}((v_{i,j,k})_{1\leq i<j<k\leq 3d})$ are linearly dependent which implies that $det(A_{3d}((v_{i,j,k})_{1\leq i<j<k\leq 3d}))=0$. 

Thus, if $v_{x,y,z}=v_{x,y,t}=v_{x,z,t}=v_{y,z,t}$ then we have $$det^{S^3}(\otimes_{1\leq i<j<k\leq 3d}(v_{i,j,k}))=det(A_{3d}(v_{i,j,k})_{1\leq i<j<k\leq 3d})=0.$$
\end{proof}

\subsection{The Map $det^{S^3}$ is Nontrivial}

Next, we define an element $E^{(3)}_d\in V_d^{\otimes\binom{3d}{3}}$ and we show that $det^{S^3}(E^{(3)}_d)\neq 0$, which gives that $det^{S^3}$ is a nontrivial map.
\begin{definition}\label{defE3}
Take $d\geq 2$ and for every $1\leq a\leq d$ consider the set $$S_a=\{3a-2,3a-1,3a\}.$$ Let $1\leq i<j<k\leq 3d$ such that  $i\in S_a$, $j\in S_b$, and $k\in S_c$, where $1\leq a\leq b\leq c\leq d$. Define $e_{i,j,k}\in V_d$ by
\[e_{i,j,k}=\begin{cases}e_a, \ \ \ \ \textrm{if } i+j+k=0 \ \textrm{(mod $3$)},\\
e_b, \ \ \ \ \textrm{if } i+j+k=1 \ \textrm{(mod $3$)},\\
e_c, \ \ \ \ \textrm{if } i+j+k=2 \ \textrm{(mod $3$)}\end{cases}.\]

We define $E^{(3)}_d\in V_d^{\binom{3d}{3}}$ to be defined by $E^{(3)}_d=((e_{i,j,k})_{1\leq i<j<k\leq 3d})$. By abuse of notation, we will also denote the image of $E^{(3)}_d$ in $V_d^{\otimes\binom{3d}{3}}$ as $E^{(3)}_d$.
\end{definition} 
We have the following result.
\begin{lemma}\label{nontrivial}
 For every $d\geq 2$ we have $det^{S^3}(E^{(3)}_d)\neq0$. In particular, the map $det^{S^3}:V_d^{\otimes\binom{3d}{3}}\to k$ is nontrivial.
\end{lemma}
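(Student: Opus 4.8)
The plan is to compute $\det\bigl(A_{3d}(E^{(3)}_d)\bigr)$ by exhibiting a block structure of the matrix $A_{3d}((e_{i,j,k})_{1\le i<j<k\le 3d})$ that makes it (up to row and column permutations) block triangular, so that its determinant is a product of determinants of smaller, explicitly identifiable blocks, each of which is visibly nonzero. The key is that the labels $e_{i,j,k}$ in Definition~\ref{defE3} are chosen so that, once we sort the triples $\{i,j,k\}$ by the partition data $(a,b,c)$ with $i\in S_a$, $j\in S_b$, $k\in S_c$, each basis vector $e_m$ appears in the equation $\mathcal{E}_{u,w}$ coupled only to other triples whose ``$e$-value'' is also $e_m$ and whose index set overlaps $\{u,w\}$ in a controlled way. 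So the first step is to partition the rows (equations $\mathcal{E}_{u,w}$, $1\le u<w<3d$) and columns (indices $(i,j,k)$) according to which of the three residue-classes governs the label and to which triple $S_a$ the relevant indices belong, and then show that with respect to a suitable ordering of these blocks the matrix is lower (or upper) block triangular.

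Concretely, I would first handle the $d=1$ case separately (here $\binom{3}{3}=1$, the single equation $\mathcal{E}_{1,2}$ reads $\lambda_{1,2,3}v_{1,2,3}=0$ after removing $\mathcal{E}_{m,3}$, so $A_3$ is the $1\times 1$ matrix $(e_1)$ and the statement is immediate once one fixes the identification $V_1\cong k$). For $d\ge 2$, the second step is to observe that the diagonal blocks of the triangularization should be exactly copies of the matrices that arose for $det^{S^2}$ and for the ordinary determinant: the combinatorial design of $E^{(3)}_d$ in Definition~\ref{defE3} is engineered so that ``freezing'' the first coordinate $i$ (or, on other blocks, a pair of coordinates) reduces $\mathcal{E}_{u,w}$ to an instance of the $det^{S^2}$ system~(\ref{dets2Rel}) on the index set $\{1,\dots,3d\}\setminus(\text{frozen part})$, evaluated on the corresponding restriction of $E^{(3)}_d$, which by the nonvanishing result for $E_d$ recalled in Section~\ref{section2} has nonzero $det^{S^2}$. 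Thus each diagonal block has nonzero determinant, and the off-diagonal blocks, by the label-coupling analysis, sit strictly on one side. The third step is the bookkeeping: verify that the chosen block order is consistent (no cycles), count that the block sizes add up to $d\binom{3d-1}{2}$, and track the overall sign (which is irrelevant since we only need nonvanishing). Finally, multiply: $\det A_{3d}(E^{(3)}_d)=\pm\prod(\text{diagonal block determinants})\ne 0$, hence $det^{S^3}(E^{(3)}_d)\ne 0$ and $det^{S^3}$ is nontrivial, which also yields $dim_k\bigl(\Lambda^{S^3}_{V_d}[3d]\bigr)\ge 1$.

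The main obstacle will be step two: proving that the matrix genuinely becomes block triangular under the proposed ordering, i.e.\ controlling \emph{all} the off-diagonal entries. One must show that whenever $\mathcal{E}_{u,w}$ contains a term $\pm\lambda_{i,j,k}e_{i,j,k}$ with $e_{i,j,k}=e_m$, the index pattern of $(i,j,k)$ relative to $(u,w)$ forces $(i,j,k)$ into the same block as the ``pivot'' triple of $\mathcal{E}_{u,w}$ or into an earlier block — never a later one. This requires a careful case analysis on the positions of $u,w$ within their triples $S_{a'}$, $S_{b'}$ and on the residue $i+j+k \pmod 3$, exactly parallel to (but one dimension more intricate than) the three-case cancellation argument in the proof of Lemma~\ref{eqRelationsLemma}. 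A secondary subtlety is the removal of the equations $\mathcal{E}_{m,3d}$: one must check that discarding these rows (and there is no matching column to discard, since $A_{3d}$ is square by a different count) is compatible with the block structure — equivalently, that the indices $(i,j,3d)$ which would have been ``pivots'' of the removed rows are instead absorbed, via the relations~(\ref{relations}), as non-pivot entries of the retained blocks without breaking triangularity. If the direct triangularization proves too delicate, a fallback is an induction on $d$: peel off the block of indices lying in $S_1$ (or meeting $\{1,2,3\}$), show the complementary block is $A_{3(d-1)}$ evaluated on $E^{(3)}_{d-1}$ up to relabeling, and show the $S_1$-block has nonzero determinant by reducing it to determinant/$det^{S^2}$ data as above.
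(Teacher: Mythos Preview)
Your plan and the paper's argument diverge substantially. The paper does not block-triangularize $A_{3d}(E^{(3)}_d)$ at all, nor does it invoke $det^{S^2}$ or any inductive reduction on $d$. Instead it works directly with the \emph{full} system $\mathcal{S}((e_{i,j,k}))$ (using the relations~(\ref{relations}) only to pass back and forth between $\mathcal{S}$ and $\mathcal{S}_{3d}$) and shows that the only solution is the trivial one by a three-case elimination: for triples $(i,j,k)$ spread over three distinct sets $S_a,S_b,S_c$ one finds, for each residue of $i+j+k$ mod $3$, a single vector equation $\mathcal{E}_{\bullet,\bullet}$ in which the relevant basis vector $e_a$ (or $e_b$, $e_c$) occurs with coefficient $\lambda_{i,j,k}$ and nowhere else, forcing $\lambda_{i,j,k}=0$; the remaining triples (two or three indices in the same $S_a$) are then killed by a short explicit table of equalities extracted from the equations $\mathcal{E}_{x,y}$ with $x,y$ confined to $S_a\cup S_b$. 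No matrix decomposition, no $det^{S^2}$, no induction.

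The weak point of your strategy is precisely the step you flagged: the assertion that the diagonal blocks are ``copies of the matrices that arose for $det^{S^2}$'' evaluated on the element $E_d$ of Section~\ref{section2}. This is not borne out by the construction. The element $E_d$ used for $det^{S^2}$ is built from \emph{parity} conditions on $i,j$, whereas $E^{(3)}_d$ is built from $i+j+k\pmod 3$; freezing one index of $e_{i,j,k}$ does not hand you the $e_{i,j}$ of the $det^{S^2}$ story, and there is no evident reason the putative diagonal blocks should coincide with (or even have the same size as) any $A_{2d'}(E_{d'})$. So before the ``bookkeeping'' step you would need a genuinely new argument that each diagonal block is nonsingular --- and at that point you are essentially redoing the paper's case analysis inside each block. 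The inductive fallback has the same problem: peeling off $S_1$ does not leave you with $A_{3(d-1)}(E^{(3)}_{d-1})$ in the complementary block, because the equations $\mathcal{E}_{u,w}$ with $u\in S_1$ or $w\in S_1$ mix triples across all $S_a$, and those rows do not disappear after you zero out the $S_1$-triples. In short, the triangularization you envisage may exist, but neither its block description nor the non-vanishing of its diagonal blocks follows from anything recalled about $det^{S^2}$; the paper's direct equation-by-equation argument is both simpler and complete.
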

\begin{proof}
 First, we explain the strategy of our proof. Let $((\lambda_{i,j,k})_{1\leq i<j<k\leq 3d})$ be a solution for the system $\mathcal{S}((v_{i,j,k})_{1\leq i<j<k\leq 3d})$. Then,  $((\lambda_{i,j,k})_{1\leq i<j<k\leq 3d})$ is also a solution for the system $\mathcal{S}_{3d}((v_{i,j,k})_{1\leq i<j<k\leq 3d})$, as all of the vector equations which define $\mathcal{S}_{3d}((v_{i,j,k})_{1\leq i<j<k\leq 3d})$ are also used in $\mathcal{S}((v_{i,j,k})_{1\leq i<j<k\leq 3d})$. 

Conversely, by Equation (\ref{relations}), we know that for $1\leq p<3d$,
 \[\sum_{i=1}^{p-1}(-1)^i\mathcal{E}_{i,p}+\sum_{j=p+1}^{3d-1}(-1)^{j+1}\mathcal{E}_{p,j}=(-1)^{3d}\mathcal{E}_{p,3d}.\]
 So, if $((\lambda_{i,j,k})_{1\leq i<j<k\leq 3d})$ is a solution for the system $\mathcal{S}_{3d}((v_{i,j,k})_{1\leq i<j<k\leq 3d})$, then it is a solution for $\mathcal{S}((v_{i,j,k})_{1\leq i<j<k\leq 3d})$. This implies $((\lambda_{i,j,k})_{1\leq i<j<k\leq 3d})$ is a solution for $\mathcal{S}_{3d}((v_{i,j,k})_{1\leq i<j<k\leq 3d})$ if and only if $((\lambda_{i,j,k})_{1\leq i<j<k\leq 3d})$ is a solution for $\mathcal{S}((v_{i,j,k})_{1\leq i<j<k\leq 3d})$.

 By  the definition of the map $det^{S^3}$, the only solution of the system $\mathcal{S}_{3d}((v_{i,j,k})_{1\leq i<j<k\leq 3d})$ is the trivial solution if and only if  $det^{S^3}(\otimes_{1\leq i<j<k\leq 3d}(v_{i,j,k}))=det(A_{3d}((v_{i,j,k})_{1\leq i<j<k\leq 3d}))\neq 0$.  
Combining these remarks, it follows that in order to prove that $det^{S^3}(E^{(3)}_d)\neq 0$ it is enough to show that the only solution for $\mathcal{S}((e_{i,j,k})_{1\leq i<j<k\leq 3d})$ is the trivial solution.

 Let $((\lambda_{i,j,k})_{1\leq i<j<k\leq 3d})$ be a solution for $\mathcal{S}((e_{i,j,k})_{1\leq i<j<k\leq 3d})$. 
We will show that $\lambda_{i,j,k}=0$ for all $1\leq i<j<k\leq 3d$.

 \textbf{Case 1}: Let $i\in S_a$, $j\in S_b$, and $k\in S_c$, where $1\leq a<b<c\leq d$. 

First, suppose that $i+j+k=0 \ (mod \ 3)$. This means that $e_{i,j,k}=e_a$. In addition, we know that the vectors $e_{r,j,k}$ for all $1\leq r<j$ with $r\neq i$ belong to the set $\{e_1,\ldots,e_{a-1},e_{a+1},\ldots,e_d\}$. Lastly, we see that $e_{j,s,k}$ for $j<s<k$, and $e_{j,k,t}$ for $k<t\leq 3d$ all belong to the set $\{e_1,\ldots,e_{a-1},e_{a+1},\ldots,e_d\}$. By using the vector equation $\mathcal{E}_{j,k}$, this gives that
 \begin{align}
     \sum_{\substack{1\leq r\leq j-1, \\ r\neq i}}(-1)^{r-1}\lambda_{s,j,k}e_{s,j,k}&+(-1)^{i-1}\lambda_{i,j,k}e_a\nonumber\aspace
     +\sum_{j+1\leq s\leq k-1}(-1)^s\lambda_{j,s,k}e_{j,s,k}&+\sum_{k+1\leq t\leq 3d}(-1)^{t+1}\lambda_{j,k,t}e_{j,k,t}=0.
		\label{eqcase1}
 \end{align}
 Notice that in equation (\ref{eqcase1}) the vector  $e_a$ appears exactly once with coefficient $(-1)^{i-1}\lambda_{i,j,k}$, while the other vectors are in $\{e_1,\dots,e_d\}\setminus \{e_a\}$. Since $\{e_1,e_2,\ldots,e_d\}$ is a basis, we obtain that $\lambda_{i,j,k}=0$.
 
 Next suppose that $i+j+k=1 \ (mod \ 3)$, which means  that $e_{i,j,k}=e_b$. Notice that the vectors $e_{i,s,k}$ for all $i<s<k$ with $s\neq j$ belong to the set $\{e_1,\ldots,e_{b-1},e_{b+1},\ldots,e_d\}$. Lastly, we see that the vectors $e_{r,i,k}$ for $1\leq r< i$ and $e_{i,k,t}$ for $k<t\leq 3d$ all belong to the set $\{e_1,\ldots,e_{b-1},e_{b+1},\ldots,e_d\}$. Using the vector equation $\mathcal{E}_{i,k}$, this gives that
 \begin{align} \label{eqcase2}
     \sum_{1\leq r\leq i-1}(-1)^{r-1}\lambda_{s,i,k}e_{s,i,k}&+\sum_{\substack{i+1\leq s\leq k-1, \\ s\neq j}}(-1)^s\lambda_{i,s,k}e_{i,s,k}\nonumber\aspace 
     +(-1)^{j}\lambda_{i,j,k}e_b&+\sum_{k+1\leq t\leq 3d}(-1)^{t+1}\lambda_{i,k,t}e_{i,k,t}=0.
 \end{align}
 Notice that in equation (\ref{eqcase2}) the vector  $e_b$ appears exactly once with coefficient $(-1)^{j}\lambda_{i,j,k}$, while the other vectors are in $\{e_1,\dots,e_d\}\setminus \{e_b\}$. Since $\{e_1,e_2,\ldots,e_d\}$ is a basis, we obtain that $\lambda_{i,j,k}=0$.
 
 Lastly, suppose that $i+j+k=2 \ (mod \ 3)$, so $e_{i,j,k}=e_c$. The vectors $e_{i,j,t}$ for all $j<t\leq 3d$ with $t\neq k$ all belong to the set $\{e_1,\ldots,e_{c-1},e_{c+1},\ldots,e_d\}$. Also, the vectors $e_{r,i,j}$ for $1\leq r<i$ and $e_{i,s,j}$ for $i<s<j$ all belong to the set $\{e_1,\ldots,e_{c-1},e_{c+1},\ldots,e_d\}$. By using the vector equation $\mathcal{E}_{i,j}$, this gives that
 \begin{align} \label{eqcase3}
     \sum_{1\leq r\leq i-1}(-1)^{r-1}\lambda_{s,i,j}e_{s,i,j}&+\sum_{i+1\leq s\leq j-1}(-1)^s\lambda_{i,s,j}e_{i,s,j}\nonumber\aspace
     +\sum_{\substack{j+1\leq t\leq 3d, \\ t\neq k}}(-1)^{t+1}\lambda_{i,j,t}e_{i,j,t}&+(-1)^{k+1}\lambda_{i,j,k}e_c=0.
 \end{align}
 Notice that in equation (\ref{eqcase3}) the vector  $e_c$ appears exactly once with coefficient $(-1)^{k+1}\lambda_{i,j,k}$, while the other vectors are in $\{e_1,\dots,e_d\}\setminus \{e_c\}$. Since $\{e_1,e_2,\ldots,e_d\}$ is a basis, we obtain that $\lambda_{i,j,k}=0$.
To summarize, we proved that if $1\leq a<b<c\leq d$, $i\in S_a$, $j\in S_b$, and $k\in S_c$, then $\lambda_{i,j,k}=0$.
 
 \textbf{Case 2}: Let $1\leq a<b\leq d$. We will consider the case when $i\in S_a$, $k\in S_b$, and $j\in S_a$ or $j\in S_b$. 

 First, notice that in the equation $\mathcal{E}_{3a-2,3a-1}$, we have that $e_{3a-2,3a-1,3b-1}=e_b$ since $3a-2+3a-1+3b-1=-1 \ (mod \ 3)=2 \ (mod \ 3)$. In addition, all of the other vectors appearing in $\mathcal{E}_{3a-2,3a-1}$ belong to the set $\{e_1,\ldots,e_d\}\setminus \{e_b\}$. So, since $\{e_1,\ldots,e_d\}$ is a basis for $V_d$, we know $\lambda_{3a-2,3a-1,3b-1}=0$. Similarly, using the equations $\mathcal{E}_{3a-2,3a}$ and $\mathcal{E}_{3a-1,3a}$, we get $\lambda_{3a-2,3a,3b-2}=0$ and $\lambda_{3a-1,3a,3b}=0$, respectively.

 Next, notice that in the equation $\mathcal{E}_{3b-2,3b-1}$, we have that $e_{3a,3b-2,3b-1}=e_a$ since $3a+3b-2+3b-1=0 \ (mod \ 3)$. In addition, all of the other vectors appearing in $\mathcal{E}_{3b-2,3b-1}$ belong to the set $\{e_1,\ldots,e_d\}\setminus\{e_a\}$. So, since $\{e_1,\ldots,e_d\}$ is a basis for $V_d$, we now $\lambda_{3a,3b-2,3b-1}=0$. Similarly, using the equations $\mathcal{E}_{3b-2,3b}$ and $\mathcal{E}_{3b-1,3b}$, we get $\lambda_{3a-1,3b-2,3b}=0$ and $\lambda_{3a-2,3b-1,3b}=0$, respectively. The above is summarized in Table \ref{ZerosTable}.

 \begin{table}[h!!]
     \centering
     \begin{tabular}{|c|c|}
        \hline
        Equation  & Relation \\
        \hline
        &\\
        $\mathcal{E}_{3a-2,3a-1}$  & $\lambda_{3a-2,3a-1,3b-1}=0$\\
        &\\
        \hline
        &\\
        $\mathcal{E}_{3a-2,3a}$  & $\lambda_{3a-2,3a,3b-2}=0$\\
        &\\
        \hline
        &\\
        $\mathcal{E}_{3a-1,3a}$  & $\lambda_{3a-1,3a,3b}=0$\\
        &\\
        \hline
        &\\
        $\mathcal{E}_{3b-2,3b-1}$  & $\lambda_{3a,3b-2,3b-1}=0$\\
        &\\
        \hline
        &\\
        $\mathcal{E}_{3b-2,3b}$  & $\lambda_{3a-1,3b-2,3b}=0$\\
        &\\
        \hline
        &\\
        $\mathcal{E}_{3b-1,3b}$  & $\lambda_{3a-2,3b-1,3b}=0$\\
        &\\
        \hline
     \end{tabular}
     \vspace{1em}
     \caption{Equalities arising from the equations $\mathcal{E}_{x,y}$, where $x<y\in S_a$ or $x<y\in S_b$}
     \label{ZerosTable}
 \end{table}

 Next, if we consider equation $\mathcal{E}_{3a-2,3b-2}$, then due to Case 1 all of the terms are zero except for
 \begin{eqnarray*}(-1)^{3a-1}\lambda_{3a-2,3a-1,3b-2}e_{3a-2,3a-1,3b-2}+(-1)^{3a}\lambda_{3a-2,3a,3b-2}e_{3a-2,3a-3b-2}\\
 +(-1)^{3b}\lambda_{3a-2,3b-2,3b-1}e_{3a-2,3b-2,3b-1}+(-1)^{3b+1}\lambda_{3a-2,3b-2,3b}e_{3a-2,3b-2,3b}=0.
 \end{eqnarray*}
 From Definition \ref{defE3} we have $e_{3a-2,3a-1,3b-2}=e_a$ and $e_{3a-2,3a,3b-2}=e_{3a-2,3b-2,3b-1}=e_{3a-2,3b-2,3b}=e_b$. However, we already know that $\lambda_{3a-2,3a,3b-2}=0$ from Table \ref{ZerosTable}. This gives us two identities
 \begin{align}
     \lambda_{3a-2,3a-1,3b-2}=0,
 \end{align}
 and
 \begin{align}
     \lambda_{3a-2,3b-2,3b-1}=\lambda_{3a-2,3b-2,3b}.
 \end{align}
 Also, if we consider equation $\mathcal{E}_{3a-2,3b-1}$, then due to Case 1 all of the terms are zero except for
 \begin{eqnarray*}
(-1)^{3a-1}\lambda_{3a-2,3a-1,3b-1}e_{3a-2,3a-1,3b-1}+(-1)^{3a}\lambda_{3a-2,3a,3b-1}e_{3a-2,3a,3b-1}\\
+(-1)^{3b-2}\lambda_{3a-2,3b-2,3b-1}e_{3a-2,3b-2,3b-1}+(-1)^{3b+1}\lambda_{3a-2,3b-1,3b}e_{3a-2,3b-1,3b}=0.
\end{eqnarray*}
 From Definition \ref{defE3}, we have $e_{3a-2,3a,3b-1}=e_{3a-2,3b-1,3b}=e_a$ and $e_{3a-2,3a-1,3b-1}=e_{3a-2,3b-2,3b-1}=e_b$. But we know that $\lambda_{3a-2,3a-1,3b-1}=\lambda_{3a-2,3b-1,3b}=0$ from Table \ref{ZerosTable}. This gives us two identities
 \begin{align}
     \lambda_{3a-2,3a,3b-1}=0,
 \end{align}
 and
 \begin{align}
     \lambda_{3a-2,3b-2,3b-1}=0.
 \end{align}
 We can similarly use equations $\mathcal{E}_{3a-2,3b-1}$, $\mathcal{E}_{3a-2,3b}$, $\mathcal{E}_{3a-1,3b-2}$, $\mathcal{E}_{3a-1,3b-1}$, $\mathcal{E}_{3a-1,3b}$, $\mathcal{E}_{3a,3b-2}$, $\mathcal{E}_{3a,3b-1}$, and $\mathcal{E}_{3a,3b}$ to get the equalities given in Table \ref{Case2Table}.
 
    \begin{table}[h!!]
    \centering
        \begin{tabular}{|c|c|c|}
        \hline
          Equation&Equality Corresponding to $e_a$&Equality Corresponding to $e_b$  \\
          \hline
          &&\\
          $\mathcal{E}_{3a-2,3b-2}$&$\lambda_{3a-2,3a-1,3b-2}=0$&$\lambda_{3a-2,3b-2,3b-1}=\lambda_{3a-2,3b-2,3b}$\\
          &&\\
          \hline
          &&\\
          $\mathcal{E}_{3a-2,3b-1}$&$\lambda_{3a-2,3a,3b-1}=0$&$\lambda_{3a-2,3b-2,3b-1}=0$\\
          &&\\
          \hline
          &&\\
          $\mathcal{E}_{3a-2,3b}$&$\lambda_{3a-2,3a-1,3b}=\lambda_{3a-2,3a,3b}$&$\lambda_{3a-2,3b-2,3b}=0$\\
          &&\\
          \hline
          &&\\
          $\mathcal{E}_{3a-1,3b-2}$&$\lambda_{3a-2,3a-1,3b-2}=\lambda_{3a-1,3a,3b-2}$&$\lambda_{3a-1,3b-2,3b-1}=0$\\
          &&\\
          \hline
          &&\\
          $\mathcal{E}_{3a-1,3b-1}$&$\lambda_{3a-1,3a,3b-1}=0$&$\lambda_{3a-1,3b-2,3b-1}=\lambda_{3a-1,3b-1,3b}$\\
          &&\\
          \hline
          &&\\
          $\mathcal{E}_{3a-1,3b}$&$\lambda_{3a-2,3a-1,3b}=0$&$\lambda_{3a-1,3b-1,3b}=0$\\
          &&\\
          \hline
          &&\\
          $\mathcal{E}_{3a,3b-2}$&$\lambda_{3a-1,3a,3b-2}=0$&$\lambda_{3a,3b-2,3b}=0$\\
          &&\\
          \hline
          &&\\
          $\mathcal{E}_{3a,3b-1}$&$\lambda_{3a-2,3a,3b-1}=\lambda_{3a-1,3a,3b-1}$&$\lambda_{3a,3b-1,3b}=0$\\
          &&\\
          \hline
          &&\\
          $\mathcal{E}_{3a,3b}$&$\lambda_{3a-2,3a,3b}=0$&$\lambda_{3a,3b-2,3b}=\lambda_{3a,3b-1,3b}$\\
          &&\\
          \hline
     \end{tabular}
     \vspace{1em}
     \caption{Equalities arising from the vector equations $\mathcal{E}_{x,y}$ for $x\in S_a$ and $y\in S_b$}\label{Case2Table}
    \end{table}

		Let's first notice that all the entries in Table \ref{Case2Table} are zero. Indeed, for example from Equation $\mathcal{E}_{3a-2,3b}$ we know that $\lambda_{3a-2,3a-1,3b}=\lambda_{3a-2,3a,3b}$, while from equation Equation $\mathcal{E}_{3a-1,3b}$ we know that $\lambda_{3a-2,3a-1,3b}=0$ which implies that $\lambda_{3a-2,3a,3b}=0$. 
		
Similar arguments show that all entries in Table \ref{Case2Table} are equal to zero. One can then check by inspection of Tables \ref{ZerosTable}  and \ref{Case2Table} that $\lambda_{i,j,k}=0$ if $i\in S_a$, $k\in S_b$ and $j$ is either in $S_a$ or $S_b$ (there are 18 such entries).

 \textbf{Case 3}: Suppose $1\leq a\leq d$. We will consider the case $i,j,k\in S_a$, that is, $i=3a-2$, $j=3a-1$, and $k=3a$. Notice that $\lambda_{i,3a-2,3a-1}=\lambda_{3a-2,3a-1,k}=0$ for all $1\leq i<3a-2$ and $3a<k\leq 3d$ by Case 2. Further, $e_{3a-2,3a-1,3a}=e_a$. So, by using the vector equation $\mathcal{E}_{3a-2,3a-1}$, we get 
 \begin{equation}
     (-1)^{3a+1}\lambda_{3a-2,3a-1,3a}e_{3a-2,3a-1,3a}=0,
 \end{equation}
  thus, we have $\lambda_{3a-2,3a-1,3a}=0$.
 
 Therefore, we proved that the system $\mathcal{S}((e_{i,j,k})_{1\leq i<j<k\leq 3d})$ only has the trivial solution and so $det^{S^3}(E^{(3)}_d)\neq0$.
\end{proof}

To summarize we have the following theorem.

\begin{theorem}\label{mainresult}
 Let $d\geq2$ and $V_d$ be a $d$-dimensional vector space. The map $det^{S^3}:V_d^{\otimes\binom{3d}{3}}\to k$ is a nontrivial linear map with the property that $det^{S^3}(\otimes_{1\leq i<j<k\leq 3d}(v_{i,j,k}))=0$ if there exists $1\leq x<y<z<t\leq 3d$ such that $v_{x,y,z}=v_{x,y,t}=v_{x,z,t}=v_{y,z,t}$. 
\end{theorem}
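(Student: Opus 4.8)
The plan is to assemble Theorem \ref{mainresult} directly from the two lemmas already in hand, since the statement is essentially a restatement of what has been proved. First I would recall from Definition \ref{mainDefs} that $det^{S^3}$ is defined as $\otimes_{1\leq i<j<k\leq 3d}(v_{i,j,k})\mapsto \det(A_{3d}((v_{i,j,k})_{1\leq i<j<k\leq 3d}))$, where $A_{3d}$ is the square matrix associated to the reduced system $\mathcal{S}_{3d}$; the remark following the definition notes that this map is multilinear, hence induces a well-defined linear map $V_d^{\otimes\binom{3d}{3}}\to k$. So the linearity half of the statement is immediate.

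Next I would invoke Lemma \ref{universality}, which gives precisely the vanishing property: if there exists $1\leq x<y<z<t\leq 3d$ with $v_{x,y,z}=v_{x,y,t}=v_{x,z,t}=v_{y,z,t}$, then the columns $c_{x,y,z},c_{x,y,t},c_{x,z,t},c_{y,z,t}$ of $A_{3d}$ satisfy the linear dependence relation \eqref{S3Cols}, so $\det(A_{3d})=0$ and hence $det^{S^3}(\otimes v_{i,j,k})=0$. Then I would invoke Lemma \ref{nontrivial}, which exhibits the explicit element $E^{(3)}_d\in V_d^{\otimes\binom{3d}{3}}$ from Definition \ref{defE3} and shows $det^{S^3}(E^{(3)}_d)\neq 0$ by proving that the homogeneous system $\mathcal{S}((e_{i,j,k}))$ has only the trivial solution (which, as the proof of Lemma \ref{nontrivial} explains, is equivalent to $\det(A_{3d})\neq 0$). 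This witnesses that $det^{S^3}$ is not identically zero, i.e. nontrivial.

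Combining these three observations gives the theorem: $det^{S^3}$ is a well-defined nontrivial linear map with the asserted vanishing property. I would close by remarking that, since $\mathcal{E}^{S^3}_{V_d}[3d]$ is by definition spanned by the simple tensors on which $det^{S^3}$ vanishes, the map factors through the quotient $\Lambda^{S^3}_{V_d}[3d]=\mathcal{T}^{S^3}_{V_d}[3d]/\mathcal{E}^{S^3}_{V_d}[3d]$, and nontriviality of $det^{S^3}$ forces $dim_k\left(\Lambda^{S^3}_{V_d}[3d]\right)\geq 1$, recording the promised partial answer to the conjecture of \cite{S3}.

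There is no real obstacle here — the theorem is a packaging of Lemmas \ref{universality} and \ref{nontrivial} — so the only thing to be careful about is making sure the ambient bookkeeping (the choice of ordering of rows and columns in Remark \ref{MatrixNotationRemark}, which only affects the sign of $\det A_{3d}$, and the reduction from $\mathcal{S}$ to $\mathcal{S}_{3d}$ justified by Lemma \ref{eqRelationsLemma} and Equation \eqref{3dIsolated}) is correctly cited so that the definition of $det^{S^3}$ is unambiguous. If one wanted the proof to be entirely self-contained one could also remark that the case $d=1$ is the ordinary determinant $det:V_1\to k$ (a $1\times 1$ matrix, trivially nonzero), matching the hypothesis $d\geq 2$ of the stated lemmas with the $d=1$ base case handled separately, though as phrased the theorem only asserts $d\geq 2$.
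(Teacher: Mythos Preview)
Your proposal is correct and matches the paper's own proof exactly: the paper simply writes ``Follows immediately from Lemmas \ref{universality} and \ref{nontrivial}.'' Your additional remarks about linearity, the factoring through $\Lambda^{S^3}_{V_d}[3d]$, and the bookkeeping are accurate elaborations but not needed for the argument.
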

\begin{proof}
Follows immediately from Lemmas \ref{universality} and \ref{nontrivial}.
\end{proof}
The next corollary immediately follows from Theorem \ref{mainresult} and the construction of $\Lambda^{S^3}_{V_d}$. It gives a partial answer to a conjecture from \cite{S3}.
\begin{corollary}\label{S3Dim}
 Let $V_d$ be a vector space of dimension $d$. Then, $dim_k\left(\Lambda^{S^3}_{V_d}[3d]\right)\geq1$.
\end{corollary}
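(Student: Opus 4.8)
The main content of this corollary has already been established: Theorem \ref{mainresult} produces, for every $d\geq 2$, a nontrivial linear map $det^{S^3}:V_d^{\otimes\binom{3d}{3}}\to k$ that vanishes on every simple tensor $\otimes_{1\leq i<j<k\leq 3d}(v_{i,j,k})$ for which there exist $1\leq x<y<z<t\leq 3d$ with $v_{x,y,z}=v_{x,y,t}=v_{x,z,t}=v_{y,z,t}$. The plan is simply to push this functional through the quotient defining $\Lambda^{S^3}_{V_d}[3d]$. First I would recall from the construction that $\Lambda^{S^3}_{V_d}[3d]=\mathcal{T}^{S^3}_{V_d}[3d]/\mathcal{E}^{S^3}_{V_d}[3d]=V_d^{\otimes\binom{3d}{3}}/\mathcal{E}^{S^3}_{V_d}[3d]$, where, by definition, $\mathcal{E}^{S^3}_{V_d}[3d]$ is the subspace \emph{linearly generated} by exactly the simple tensors on which Lemma \ref{universality} guarantees $det^{S^3}$ vanishes.

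The key step is then the observation that, because $det^{S^3}$ is linear and annihilates each of these generating simple tensors, it annihilates their entire span, i.e. the whole subspace $\mathcal{E}^{S^3}_{V_d}[3d]$. By the universal property of the quotient, $det^{S^3}$ therefore factors uniquely as $det^{S^3}=\overline{det^{S^3}}\circ\pi$, where $\pi:V_d^{\otimes\binom{3d}{3}}\to\Lambda^{S^3}_{V_d}[3d]$ is the canonical projection and $\overline{det^{S^3}}:\Lambda^{S^3}_{V_d}[3d]\to k$ is linear. Finally, Lemma \ref{nontrivial} gives $det^{S^3}(E^{(3)}_d)\neq 0$ for the explicit tensor $E^{(3)}_d$ of Definition \ref{defE3}, so $\overline{det^{S^3}}\bigl(\pi(E^{(3)}_d)\bigr)\neq 0$; hence $\overline{det^{S^3}}$ is a nonzero functional, $\Lambda^{S^3}_{V_d}[3d]\neq 0$, and thus $dim_k\bigl(\Lambda^{S^3}_{V_d}[3d]\bigr)\geq 1$. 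The remaining case $d=1$ is trivial: then $\binom{3d}{3}=1$, there is no quadruple $x<y<z<t\leq 3$, so $\mathcal{E}^{S^3}_{V_1}[3]=0$ and $\Lambda^{S^3}_{V_1}[3]=V_1$ is one-dimensional.

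I do not expect any genuine obstacle here; the argument is purely formal once Theorem \ref{mainresult} is in hand. The only point deserving care is the passage from "$det^{S^3}$ vanishes on the generating simple tensors" to "$det^{S^3}$ vanishes on $\mathcal{E}^{S^3}_{V_d}[3d]$," which is immediate from linearity but is the precise reason the functional descends. Everything substantive — the vanishing property and the nontriviality witnessed by $E^{(3)}_d$ — is borrowed wholesale from the preceding lemmas.
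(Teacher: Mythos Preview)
Your proof is correct and follows exactly the approach the paper intends: it simply notes that the corollary is immediate from Theorem \ref{mainresult} and the definition of $\Lambda^{S^3}_{V_d}$, and you have spelled out precisely why (factoring $det^{S^3}$ through the quotient). Your separate treatment of $d=1$ is a nice touch, since Theorem \ref{mainresult} is stated only for $d\geq 2$.
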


\subsection{$SL_d(k)$ invariance}
Next we show that the map $det^{S^3}$ is invariant under the action of the group $SL_d(k)$. First notice that we can extend the action of the group  $GL_d(k)$ on $V_d$, to an action on $\mathcal{T}^{S^3}_{V_d}[3d]= V_d^{\otimes\binom{3d}{3}}$ by
\begin{equation}
  T\ast(\otimes_{1\leq i<j<k\leq 3d}(v_{i,j,k}))=\otimes_{1\leq i<j<k\leq 3d}(T(v_{i,j,k})).  
\end{equation}
We have the following.
\begin{proposition}\label{InvarProps}
    Let $T\in GL_d(k)$ and $v_{i,j,k}\in V_d$ for $1\leq i<j<k\leq 3d$. Then,
    \begin{equation}
        det^{S^3}(\otimes_{1\leq i<j<k\leq 3d}(T(v_{i,j,k})))=det(T)^{\binom{3d-1}{2}}\cdot det^{S^3}(\otimes_{1\leq i<j<k\leq 3d}(v_{i,j,k})).
    \end{equation}
    In particular, $det^{S^3}:V_d^{\otimes\binom{3d}{3}}\to k$ is invariant under the action by $SL_d(k)$.
\end{proposition}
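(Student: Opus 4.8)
The plan is to track how the matrix $A_{3d}((v_{i,j,k}))$ transforms when each vector $v_{i,j,k}$ is replaced by $T(v_{i,j,k})$, and then use multiplicativity of the determinant. First I would fix the basis $\mathcal{B}_d$ and observe that the system $\mathcal{S}_{3d}$ is a \emph{vector} system: each equation $\mathcal{E}_{m,n}$ is an identity in $V_d$, so upon choosing coordinates with respect to $\mathcal{B}_d$ it becomes $d$ scalar equations. Concretely, $A_{3d}((v_{i,j,k}))$ is the $d\binom{3d-1}{2}\times d\binom{3d-1}{2}$ block matrix whose $((m,n),(i,j,k))$ entry is the column vector (signed) $\pm v_{i,j,k}\in k^d$ when $(i,j,k)$ is one of the three index-types occurring in $\mathcal{E}_{m,n}$ and $0\in k^d$ otherwise; that is, writing $v_{i,j,k}=\sum_{\ell} (v_{i,j,k})_\ell\, e_\ell$, the scalar matrix has a $d\times 1$ vertical strip of entries in the columns indexed by $(i,j,k)$.

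The key step is then the following observation: replacing $v_{i,j,k}$ by $T(v_{i,j,k})$ replaces each such vertical $d\times 1$ strip by $T$ applied to it. Grouping the $d\binom{3d-1}{2}$ rows into $\binom{3d-1}{2}$ blocks of size $d$ (one block per equation $\mathcal{E}_{m,n}$), the effect is precisely left-multiplication by the block-diagonal matrix $\mathrm{diag}(T,T,\ldots,T)$ with $\binom{3d-1}{2}$ copies of $T$ on the diagonal. Hence
\[
A_{3d}((T(v_{i,j,k})))=\bigl(I_{\binom{3d-1}{2}}\otimes T\bigr)\cdot A_{3d}((v_{i,j,k})),
\]
and taking determinants gives $\det\bigl(A_{3d}((T(v_{i,j,k})))\bigr)=\det(T)^{\binom{3d-1}{2}}\det\bigl(A_{3d}((v_{i,j,k}))\bigr)$, which is exactly the claimed identity. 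The final sentence is then immediate: if $T\in SL_d(k)$ then $\det(T)=1$, so $det^{S^3}$ is $SL_d(k)$-invariant.

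The one point that needs care — and is the main (minor) obstacle — is verifying that the block-row structure is genuinely clean, i.e.\ that every scalar entry of $A_{3d}$ sits inside one of the $d$-row blocks attached to a single equation $\mathcal{E}_{m,n}$, with no mixing of coordinates across equations. This is true by construction because the rows of $A_{3d}$ are indexed by the pairs $(m,n)$ with $1\le m<n<3d$, and each vector equation contributes exactly $d$ consecutive scalar rows (its $d$ coordinate components in $\mathcal{B}_d$), so the row-index set is partitioned into the blocks as described; the column indexing by triples $(i,j,k)$ is unaffected by $T$. I would also remark that the exponent $\binom{3d-1}{2}$ equals the number of blocks, consistent with the $d=2$ case of Proposition \ref{S3Facts}(3), where $\binom{5}{2}=10$. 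One should note as well that the argument does not require $T$ invertible for the determinant identity itself (it holds for all $T\in GL_d(k)$, and in fact for all $T\in\mathrm{End}(V_d)$ by continuity/polynomiality), but invertibility is what is stated, and only $\det(T)=1$ is used for the invariance conclusion.
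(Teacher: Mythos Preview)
Your proof is correct and follows essentially the same approach as the paper: both arguments observe that $A_{3d}((T(v_{i,j,k})))$ is obtained from $A_{3d}((v_{i,j,k}))$ by left-multiplication with the block-diagonal matrix having $\binom{3d-1}{2}$ copies of $T$ (your $I_{\binom{3d-1}{2}}\otimes T$ is the paper's $I_{3,d,T}$), and then take determinants. Your extra paragraph justifying the clean block-row structure is more explicit than the paper's treatment but not a different idea.
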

\begin{proof}
    Let $T\in GL_d(k)$ be a linear transformation $T:V_d\to V_d$ and $v_{i,j,k}\in V_d$ for $1\leq i<j<k\leq 3d$. We can define the $d\binom{3d-1}{2}\times d\binom{3d-1}{2}$ matrix $I_{3,d,T}$ by the block matrix
    \[I_{3,d,T}=\begin{pmatrix}
        T&\textbf{0}&\ldots&\textbf{0}\\
        \textbf{0}&T&\ldots&\textbf{0}\\
        \vdots&\vdots&\ddots&\vdots\\
        \textbf{0}&\textbf{0}&\ldots&T
    \end{pmatrix},\]
    where \textbf{0} is the $d\times d$ zero matrix. Notice that $det(I_{3,d,T})=det(T)^{\binom{3d-1}{2}}$. 
 
With this notation, one can see that 
$$A_{3d}((T(v_{i,j,k}))_{1\leq i<j<k\leq 3d})=I_{3,d,T}A_{3d}((v_{i,j,k})_{1\leq i<j<k\leq 3d}).$$  We have
\begin{eqnarray*} det^{S^3}(T\ast((v_{i,j,k})_{1\leq i<j<k\leq 3d})))&=&det(A_{3d}((T(v_{i,j,k}))_{1\leq i<j<k\leq 3d}))\\
&=&det(I_{3,d,T}A_{3d}((v_{i,j,k})_{1\leq i<j<k\leq 3d}))\\
&=&det(I_{3,d,T})det(A_{3d}((v_{i,j,k})_{1\leq i<j<k\leq 3d}))\\
&=&det(T)^{\binom{3d-1}{2}}det^{S^3}((v_{i,j,k})_{1\leq i<j<k\leq 3d}), 
\end{eqnarray*}
which proves our statement. 
\end{proof}

\begin{remark}
   Since $det^{S^3}$ is $SL_d(k)$ invariant, from general results in invariant theory (see \cite{stu}, for example), we know that $det^{S^3}(\otimes_{1\leq i<j<k\leq 3d}(v_{i,j,k}))$ can be written as the sum of products of determinants of $d\times d$ matrices with columns corresponding to $v_{i,j,k}$.  In the case $d=2$ an explicit formula in terms of these determinants was given in \cite{S3}. It would be interesting to obtain an explicit formula in general.
\end{remark}

\section{$d$-partitions of $K_{3d}^3$ that have zero Betti numbers}

\label{SectionCombS3}


As mentioned in Section \ref{section2}, an edge  $d$-partition $(\Gamma_1,\Gamma_2,\dots, \Gamma_d)$ of the complete graph $K_{2d}$ is cycle free if and only if $det^{S^2}(f_{(\Gamma_1,\Gamma_2,\dots, \Gamma_d)})\neq 0$. 

Generalizing the idea of cycles from graphs to hypergraphs is not a trivial problem. For some results and accounts of this problem one can check \cite{geo,gk,rrs,seb}. A natural approach is suggested by the fact that a connected simple graph is cycle free if and only if its first Betti number is zero. With this remark, the above result shows that $det^{S^2}(f_{(\Gamma_1,\Gamma_2,\dots, \Gamma_d)})\neq 0$ if and only if  $(\Gamma_1,\Gamma_2,\dots, \Gamma_d)$ is a homogeneous $d$-partition of $K_{2d}$ and  $b_{1}(\Gamma_i)=0$  for all $1\leq i\leq d$, (where $b_{1}(\Gamma)$ is the first Betti number of the graph $\Gamma$). 

In this section we will show that if $(\mathcal{H}_1,\mathcal{H}_2,\dots, \mathcal{H}_d)$ is a $d$-partition of the complete hypergraph $K_{3d}^3$ then $det^{S^3}(\omega_{(\mathcal{H}_1,\mathcal{H}_2,\dots, \mathcal{H}_d)})\neq 0$ if and only if $(\mathcal{H}_1,\mathcal{H}_2,\dots, \mathcal{H}_d)$ is homogeneous and the 
$b_{2}(\mathcal{H}_i)=0$ for all $1\leq i\leq d$, where $b_{2}(\mathcal{H})$ is the second Betti number of the hypergraph $\mathcal{H}$.

First we introduce a few notations and conventions. In this section $\mathbb{R}$ is the field of real numbers,  and  $\{l_1,l_2,\dots,l_n\}$ is a fixed basis of the vector space $\mathbb{R}^n$.

If $\mathcal{H}=(V,E)$ is a hypergraph we denote 
\begin{eqnarray*}E_s(\mathcal{H})&=&\{\{a_1,a_2,\dots,a_s\}| \{a_1,a_2,\dots,a_s,x_1,x_2,\dots,x_{r-s}\}\in E(\mathcal{H})\\ &&~{\rm for ~ some ~}x_1,x_2,\dots,x_{r-s}\in V(\mathcal{H})\}.
\end{eqnarray*}

 For every sub-hypergraph $\mathcal{H}$ of $K_{n}^3$ one can associate a $2$-dimensional CW complex $X(\mathcal{H})\subseteq \mathbb{R}^n$ by taking 
$$X_{-1}=\emptyset,$$ 
$$X_{0}=\{l_{a}\in \mathbb{R}^n|\{a\}\in E_1(\mathcal{H})\},$$
 $$X_{1}=\{t_1l_{a}+t_2l_b\in \mathbb{R}^n|\;  t_1,t_2,\in [0,1],~t_1+t_2=1,~{\rm and }~\{a,b\}\in E_2(\mathcal{H})\},$$ and  
$$X_{2}=\{t_1l_{a}+t_2l_b+t_3l_c\in \mathbb{R}^n|\; t_1,t_2,t_3\in [0,1],~t_1+t_2+t_3=1,~{\rm and }~ \{a,b,c\}\in E_3(\mathcal{H}) \}.$$

\begin{definition} Let  $\mathcal{H}$ be a sub-hypergraph of $K_{n}^3$. We define the Betti numbers of $\mathcal{H}$   as $$b_i(\mathcal{H})=b_i(X(\mathcal{H})),$$ 
the  reduced Betti numbers of its corresponding CW complex $X(\mathcal{H})$. 
\end{definition}
\begin{remark} Notice that if $\mathcal{H}$ is a sub-hypergraph of $K_{n}^3$ then the CW complex $X(\mathcal{H})$ has dimension $2$, and so $b_{i}(\mathcal{H})=0$ for all $i\geq 3$. 
\end{remark}

\begin{example}
If we consider the hypergraph $K_{3}^3$, then $X(K_{3}^3)$ is the simplex $\Delta_2=\{(x,y,z)\in \mathbb{R}^3|\;x,y,z\in [0,1], ~x+y+z=1\}$. Its Betti numbers are $b_{-1}(K_{3}^3)=b_{0}(K_{3}^3)=b_{1}(K_{3}^3)=b_{2}(K_{3}^3)=0$.
\end{example}
\begin{example}
If we consider the hypergraph $K_{4}^3$, then $X(K_{4}^3)$ is the sphere $S^2\subseteq \mathbb{R}^4$. Its Betti numbers are $b_{-1}(K_{4}^3)=b_{0}(K_{4}^3)=b_{1}(K_{4}^3)=0$, and $b_{2}(K_{4}^3)=1$.
\end{example}

\begin{definition}
We say that a $d$-partition $(\mathcal{H}_1,\mathcal{H}_2,\dots, \mathcal{H}_d)$ of $K_{3d}^3$ is pre-homogeneous if  for all $1\leq i\leq d$ we have $$\vert E_1(\mathcal{H}_i)\vert=3d,$$ and  $$\vert E_2(\mathcal{H}_i)\vert=\frac{3d(3d-1)}{2}.$$ 
We say that a pre-homogeneous  $d$-partition $(\mathcal{H}_1,\mathcal{H}_2,\dots, \mathcal{H}_d)$ of $K_{3d}^3$ is homogeneous if  for all $1\leq i\leq d$ we have $$\vert E_3(\mathcal{H}_i)\vert=\frac{(3d-1)(3d-2)}{2}.$$
\end{definition}

\begin{remark}
Notice that $\vert E_1(K_{3d}^3)\vert=3d$, $\vert E_2(K_{3d}^3)\vert=\frac{3d(3d-1)}{2}$, and $\vert E_3(K_{3d}^3)\vert=d\frac{(3d-1)(3d-2)}{2}$. So, if $(\mathcal{H}_1,\mathcal{H}_2,\dots, \mathcal{H}_d)$ is a homogeneous $d$-partition of $K_{3d}^3$, then each  $X(\mathcal{H}_i)$ will have the same $1$-skeleton  as $X(K_{3d}^3)$, and the hyperedges of $K_{3d}^3$ are divided evenly among the hypergraphs $\mathcal{H}_i$. 
\end{remark}

\begin{lemma} Let $(\mathcal{H}_1,\mathcal{H}_2,\dots, \mathcal{H}_d)$ be a $d$-partition of $K_{3d}^3$ that is not pre-homogeneous, then $det^{S^3}(\omega_{(\mathcal{H}_1,\mathcal{H}_2,\dots, \mathcal{H}_d)})=0$. \label{lemmaPreH1} 
\end{lemma}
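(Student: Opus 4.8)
The plan is to show that if the $d$-partition fails pre-homogeneity, then the matrix $A_{3d}$ associated to $\omega_{(\mathcal{H}_1,\dots,\mathcal{H}_d)}$ has a nonzero kernel, hence $det^{S^3}(\omega_{(\mathcal{H}_1,\dots,\mathcal{H}_d)})=\det(A_{3d})=0$. Equivalently, I will exhibit a nontrivial solution $(\lambda_{i,j,k})$ of the system $\mathcal{S}((e_{i,j,k})_{1\le i<j<k\le 3d})$, where $e_{i,j,k}=e_x$ iff the hyperedge $\{i,j,k\}$ lies in $\mathcal{H}_x$; by the equivalence established in the proof of Lemma \ref{nontrivial} (between solutions of $\mathcal{S}$ and of $\mathcal{S}_{3d}$), this forces $\det(A_{3d})=0$. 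The first step is to recast the failure of pre-homogeneity in a usable form: a counting argument shows that $\sum_{i=1}^d |E_1(\mathcal{H}_i)| = 3d\cdot d$ would hold if every $\mathcal{H}_i$ used all $3d$ vertices, and similarly $\sum_{i=1}^d |E_2(\mathcal{H}_i)|=d\cdot\frac{3d(3d-1)}{2}$. Since in general $|E_1(\mathcal{H}_i)|\le 3d$ and $|E_2(\mathcal{H}_i)|\le \binom{3d}{2}$ — because $K_{3d}^3$ has exactly these many $1$-subsets and $2$-subsets and each appears in some $\mathcal{H}_i$ — "not pre-homogeneous" means there is some index $i_0$ with either $|E_1(\mathcal{H}_{i_0})|<3d$ (some vertex $a$ appears in no hyperedge of $\mathcal{H}_{i_0}$), or $|E_2(\mathcal{H}_{i_0})|<\frac{3d(3d-1)}{2}$ (some pair $\{a,b\}$ is covered by no hyperedge of $\mathcal{H}_{i_0}$).

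The key step is then to produce the kernel vector from such a "missing face." Suppose first a pair $\{m,n\}$ (say $m<n$) is not an element of $E_2(\mathcal{H}_{i_0})$, i.e. no hyperedge of the form $\{m,n,\ast\}$, $\{m,\ast,n\}$, or $\{\ast,m,n\}$ lies in $\mathcal{H}_{i_0}$. Look at the vector equation $\mathcal{E}_{m,n}$ from Equation (\ref{MainEq}): every term $v_{r,m,n},v_{m,s,n},v_{m,n,t}$ appearing in it is then some $e_x$ with $x\ne i_0$. Hence the $e_{i_0}$-component of $\mathcal{E}_{m,n}$ is identically zero, so $\mathcal{E}_{m,n}$ as written splits off no constraint on the $e_{i_0}$-coordinate; this means the row(s) of $A$ indexed by $(m,n)$ and the "color" $i_0$ are zero rows. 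Since $A_{3d}$ is obtained from $A$ by deleting rows $\mathcal{E}_{m,3d}$, as long as $n<3d$ we directly conclude $A_{3d}$ has a zero row and $\det A_{3d}=0$; if $n=3d$, use relation $\mathcal{R}_{m}$ or $\mathcal{R}_n$ from Lemma \ref{eqRelationsLemma} (Equation (\ref{relations})) to rewrite $\mathcal{E}_{m,3d}$ as a combination of the retained equations and transfer the degeneracy there — concretely, the $e_{i_0}$-row of the matrix $A_{3d}$ is a linear combination of other $e_{i_0}$-rows with the same vanishing, forcing a dependency among rows. The case where instead a vertex $a$ is missing from $E_1(\mathcal{H}_{i_0})$ is subsumed: if $a\in S$ has no hyperedge of $\mathcal{H}_{i_0}$ through it, then in particular every pair containing $a$ is missing from $E_2(\mathcal{H}_{i_0})$, so pick any such pair $\{m,n\}$ and run the previous argument.

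The main obstacle I expect is the bookkeeping at the boundary index $n=3d$: the matrix $A_{3d}$ does not literally contain the rows $\mathcal{E}_{m,3d}$, so one cannot just point to a zero row, and one must instead invoke the relations $\mathcal{R}_n$ to see that the missing-color degeneracy still produces a genuine linear dependence among the columns or rows of $A_{3d}$. A clean way to handle this uniformly is to argue on columns rather than rows, mirroring the proof of Lemma \ref{universality}: show that for a missing face the relevant columns $c_{i,j,k}$ satisfy a nontrivial linear relation in the $e_{i_0}$-block, which is preserved under deleting the $\mathcal{E}_{m,3d}$ rows. I would also double-check the elementary counting claim that $|E_1(\mathcal{H}_i)|\le 3d$ and $|E_2(\mathcal{H}_i)|\le\binom{3d}{2}$ with equality for all $i$ exactly when pre-homogeneous, since that is what translates the hypothesis into the existence of a missing face; this is immediate from $E_1(\mathcal{H}_i)\subseteq E_1(K_{3d}^3)$ and $E_2(\mathcal{H}_i)\subseteq E_2(K_{3d}^3)$ together with the partition property (iii).
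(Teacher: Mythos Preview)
Your proposal is correct and follows essentially the same route as the paper's proof: translate ``not pre-homogeneous'' into the existence of some $i_0$ and some pair $\{m,n\}\notin E_2(\mathcal{H}_{i_0})$ (the missing-vertex case reducing to this), observe that the $e_{i_0}$-row of $\mathcal{E}_{m,n}$ is then identically zero, conclude directly when $n<3d$, and for $n=3d$ invoke the relation $\mathcal{R}_m$ from Lemma~\ref{eqRelationsLemma} to produce a linear dependence among the retained rows of $A_{3d}$. The paper argues exactly this way (on rows, not columns), so your main plan matches it step for step; your suggested column-based alternative is not needed.
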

\begin{proof} 
First assume that there exist $1\leq m<n\leq 3d$, and $1\leq i\leq d$ such that $\{m,n\}\notin  E_2(\mathcal{H}_i)$. Consider the vector equation $\mathcal{E}_{m,n}$ and notice that none of the vectors $v_{s,m,n}$, $v_{m,t,n}$ or $v_{m,n,s}$ are equal to $e_i$. In particular, the $i$-th row of its corresponding  matrix $M_{m,n}(\omega_{(\mathcal{H}_1,\mathcal{H}_2,\dots, \mathcal{H}_d)})$ is zero. 

If $n\neq 3d$, then the matrix $M_{m,n}$ appears in $A_{3d}(\omega_{(\mathcal{H}_1,\mathcal{H}_2,\dots, \mathcal{H}_d)})$, which implies that  $det(A_{3d}(\omega_{(\mathcal{H}_1,\mathcal{H}_2,\dots, \mathcal{H}_d)}))=0$, and so $det^{S^3}(\omega_{(\mathcal{H}_1,\mathcal{H}_2,\dots \mathcal{H}_d)})=0$. 

If $n=3d$, then because $i$-th row of $M_{m,3d}$ is zero,  from relation $\mathcal{R}_m$ (see Lemma \ref{eqRelationsLemma}) we get that the $i$-th rows of the matrices $M_{s,m}$  and $M_{m,t}$ for $1\leq s<m$, and $m<t\leq 3d-1$ are linearly dependent. Since all of them appear in  $A_{3d}(\omega_{(\mathcal{H}_1,\mathcal{H}_2,\dots, \mathcal{H}_d)})$ we get again that $det(A_{3d}(\omega_{(\mathcal{H}_1,\mathcal{H}_2,\dots, \mathcal{H}_d)}))=0$, which implies that $det^{S^3}(\omega_{(\mathcal{H}_1,\mathcal{H}_2,\dots, \mathcal{H}_d)})=0$. 

If there exists a vertex $1\leq m\leq 3d$, and $1\leq i\leq d$ such that $m \notin  E_1(\mathcal{H}_i)$, then obviously any edge that ends in $m$ will not be in $E_2(\mathcal{H}_i)$ (for example $\{1,m\}$ or $\{m,3d\}$), and so we will be in the case we already discussed. 
\end{proof}

\begin{remark} \label{rem48} Let  $(\mathcal{H}_1,\mathcal{H}_2,\dots, \mathcal{H}_d)$ be a pre-homogeneous $d$-partition of $K_{3d}^3$.   We will denote by $\mathcal{K}_i$ the reduced-homology complex associated to the CW complex  $X(\mathcal{H}_i)$. More precisely, for all $1\leq i\leq d$ we denote 
\begin{itemize}
\item the generator in degree  $-1$ by $f_i^{\emptyset}$, 
\item the generators in degree $0$ by $f_{i}^{a}$ for $1\leq a\leq 3d$,  
\item the generators in degree $1$ by $f_{i}^{a,b}$ for $1\leq a<b\leq 3d$, 
\item the generators in degree $2$ by $f_{i}^{a,b,c}$ for $1\leq a<b<c\leq 3d$ such that $\{a,b,c\}\in E_3(\mathcal{H}_i)$.
\end{itemize} 
With this notation, the complex $\mathcal{K}_i$ is given by
\[ 0\to \bigoplus_{\{a,b,c\}\in E_3(\mathcal{H}_i)}kf_{i}^{a,b,c}\stackrel{\partial_2^i}{\longrightarrow} \bigoplus_{1\leq a<b\leq 3d}kf_{i}^{a,b} \stackrel{\partial_1^i}{\longrightarrow} \bigoplus_{1\leq a\leq 3d}kf_{i}^{a}\stackrel{\partial_0^i}{\longrightarrow} kf_{i}^{\emptyset}\to 0,\]
where $\partial_0^i(f_{i}^{a})=f_{i}^{\emptyset}$, $\partial_1^i(f_{i}^{a,b})=f_{i}^{b}-f_{i}^{a}$, and $\partial_2^i(f_{i}^{a,b,c})=f_{i}^{b,c}-f_{i}^{a,c}+f_{i}^{a,b}$.  One should notice that the complex $\mathcal{K}_i$ gives the reduced homology of the CW complex $X(\mathcal{H}_i)$. 
\end{remark}

\begin{lemma}  \label{lemmaPreH2} Let  $(\mathcal{H}_1,\mathcal{H}_2,\dots, \mathcal{H}_d)$ be a pre-homogeneous 
$d$-partition of $K_{3d}^3$. Then  $$rank (\bigoplus_{1\leq i\leq d}\partial_2^i)=rank (A(\omega_{(\mathcal{H}_1,\mathcal{H}_2,\dots, \mathcal{H}_d)})).$$
\end{lemma}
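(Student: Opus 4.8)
The plan is to exhibit an explicit correspondence between the rows of $A(\omega_{(\mathcal{H}_1,\dots,\mathcal{H}_d)})$ and the data of the boundary maps $\partial_2^i$, and then to argue that the row spaces of the two matrices have the same dimension. First I would unpack what the matrix $A = A(\omega_{(\mathcal{H}_1,\dots,\mathcal{H}_d)})$ looks like: its rows are indexed by pairs $(m,n)$ with $1\le m<n\le 3d$ together with a choice of basis vector $e_i$ (equivalently, by the $i$-th component of the vector equation $\mathcal{E}_{m,n}$), and its columns are indexed by triples $\{i_1,i_2,i_3\}$ with $1\le i_1<i_2<i_3\le 3d$ together with a choice of $e_i$. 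Because the partition is pre-homogeneous, every pair $\{m,n\}$ lies in exactly one $E_2(\mathcal{H}_i)$, so in the $i$-th component of $\mathcal{E}_{m,n}$ only those triples $\{m,n,\ell\}$ (in whatever order) that belong to $E_3(\mathcal{H}_i)$ contribute, each with coefficient $\pm1$ (the signs coming from Equation~(\ref{MainEq})). This is precisely the data of which $2$-cells of $X(\mathcal{H}_i)$ contain a given $1$-cell $\{m,n\}$, with signs matching $\partial_2^i(f_i^{a,b,c})=f_i^{b,c}-f_i^{a,c}+f_i^{a,b}$ up to an overall sign per row. Hence, after this identification, the submatrix of $A$ supported on the rows indexed by $(m,n,i)$ with $\{m,n\}\in E_2(\mathcal{H}_i)$ is, up to signs of rows and columns, exactly the block matrix $\bigoplus_{1\le i\le d}\partial_2^i$ (the transpose thereof, or $\partial_2^i$ itself depending on convention — only the rank matters).

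Next I would handle the rows of $A$ that do \emph{not} correspond to a $1$-cell of the relevant complex — namely the rows $(m,n,i)$ with $\{m,n\}\notin E_2(\mathcal{H}_i)$, and also the rows coming from the discarded equations $\mathcal{E}_{m,3d}$. For the first kind, by the argument already used in Lemma~\ref{lemmaPreH1} such a row is identically zero (none of $v_{s,m,n}, v_{m,s,n}, v_{m,n,t}$ equals $e_i$), so it contributes nothing to the rank. Wait — I need to be careful: in a pre-homogeneous partition every pair $\{m,n\}$ does lie in some $E_2(\mathcal{H}_i)$, so for each fixed $\{m,n\}$ exactly one value of $i$ gives a nonzero row; the other $d-1$ rows are zero. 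That is consistent. For the rows corresponding to $\mathcal{E}_{m,3d}$: recall $A_{3d}$ (hence $A$ restricted to the equations we keep) deletes these, but $A$ itself includes all $\mathcal{E}_{m,n}$; by Lemma~\ref{eqRelationsLemma} (relation $\mathcal{R}_n$) the rows $\mathcal{E}_{m,3d}$ are linear combinations of the retained rows, so $\operatorname{rank}(A)=\operatorname{rank}(A_{3d})$ and I may as well work with whichever is more convenient. The point to make cleanly is that deleting zero rows and linearly dependent rows does not change the rank, so $\operatorname{rank}(A)$ equals the rank of the $\bigl(\sum_i |E_2(\mathcal{H}_i)|\bigr)\times\bigl(\sum_i |E_3(\mathcal{H}_i)|\bigr)$ submatrix, which I will have identified with $\bigoplus_i \partial_2^i$.

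The main obstacle — and the step that needs genuine care rather than bookkeeping — is the sign matching. I expect the per-row sign of $\mathcal{E}_{m,n}$'s $i$-th component against the standard simplicial $\partial_2^i$ will not be globally $+1$: Equation~(\ref{MainEq}) has the asymmetric sign pattern $(-1)^{r-1}$, $(-1)^s$, $(-1)^{t+1}$ on the three positions of the index in $\{r,m,n\}$, $\{m,s,n\}$, $\{m,n,t\}$, whereas the simplicial boundary uses the alternating pattern $+,-,+$ on the three faces. I would resolve this by a diagonal change of basis: multiply the row indexed by $(m,n,i)$ by a suitable sign $\varepsilon_{m,n}$ and the column indexed by $(\{a,b,c\},i)$ by a sign $\eta_{a,b,c}$, chosen so that all entries match the simplicial signs simultaneously. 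The existence of such $\varepsilon, \eta$ is exactly the statement that a certain $2$-cocycle with $\mathbb{Z}/2$ coefficients on the simplex is a coboundary, which holds because the simplex is contractible; concretely one can just write down $\varepsilon_{m,n}=(-1)^{m+n}$ (or similar) and $\eta_{a,b,c}=(-1)^{a+b+c}$ (or similar) and verify the three positional signs line up — a short finite check. Since diagonal scaling by units preserves rank, this completes the identification and hence the proof. I would present the sign computation as one displayed verification and otherwise keep the argument at the level of "row space of $A$ = row space of $\bigoplus_i \partial_2^i$."
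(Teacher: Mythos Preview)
Your overall strategy --- identify the matrix $A(\omega_{(\mathcal{H}_1,\dots,\mathcal{H}_d)})$ with the matrix of $\bigoplus_i\partial_2^i$ up to row/column permutations and diagonal sign changes --- is exactly the paper's approach, and your proposed sign adjustments $\varepsilon_{m,n}=(-1)^{m+n}$, $\eta_{a,b,c}=(-1)^{a+b+c}$ are essentially what the paper uses (the paper multiplies the column $\mathcal{C}_{x,y,z}$ by $(-1)^{x+y+z+1}$ and the rows of $\mathcal{E}_{x,y}$ by $(-1)^{x+y}$, then checks directly that the three positional signs become $+1,-1,+1$).

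However, there is a genuine error in your understanding of ``pre-homogeneous'' that makes parts of your argument wrong. You write that pre-homogeneity implies ``every pair $\{m,n\}$ lies in exactly one $E_2(\mathcal{H}_i)$'' and hence ``for each fixed $\{m,n\}$ exactly one value of $i$ gives a nonzero row; the other $d-1$ rows are zero.'' This is backwards: pre-homogeneous means $|E_2(\mathcal{H}_i)|=\binom{3d}{2}$ for \emph{every} $i$, so each pair $\{m,n\}$ lies in $E_2(\mathcal{H}_i)$ for \emph{all} $i$, and the row $(m,n,i)$ is in general nonzero for every $i$. Correspondingly, in Remark~\ref{rem48} the degree-$1$ generators $f_i^{a,b}$ of $\mathcal{K}_i$ range over all $1\le a<b\le 3d$, for each $i$. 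Thus both matrices already have exactly $d\binom{3d}{2}$ rows and $\binom{3d}{3}$ columns (your description of the columns as indexed by a triple \emph{together with} a choice of $e_i$ is also off: the columns of $A$ are indexed by the unknowns $\lambda_{i,j,k}$, i.e.\ by triples alone), and the identification is direct with no zero-row deletion needed. Once you fix this, your whole paragraph about ``rows that do not correspond to a $1$-cell'' disappears, and what remains is precisely the paper's argument.
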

\begin{proof}
We will show that up to permuting rows and columns, and multiplying them by $-1$ the two matrices are the same. 
Indeed, first notice that $A(\omega_{(\mathcal{H}_1,\mathcal{H}_2,\dots, \mathcal{H}_d)})$ is a $d{3d \choose 2}\times {3d \choose 3}$ matrix, 
\[dim_k(\bigoplus_{1\leq i\leq d} (\bigoplus_{1\leq a<b\leq 3d}kf_{i}^{a,b}))=d{3d \choose 2},\]
and
\[dim_k(\bigoplus_{1\leq i\leq d}(\bigoplus_{\{a,b,c\}\in E_3(\mathcal{H}_i)}kf_{i}^{a,b,c}))={3d \choose 3},\]
so the corresponding matrices have the same dimension. 

The columns in both matrices are indexed by $\{a,b,c\}$ where $1\leq a<b<c\leq 3d$, so we can permute the columns of $\bigoplus_{1\leq i\leq d}\partial_2^i$ such that they correspond the the ordering used for $A(\omega_{(\mathcal{H}_1,\mathcal{H}_2,\dots, \mathcal{H}_d)})$ (see Remark \ref{MatrixNotationRemark}). 

For rows, we first  fix $\{a,b\}\in E_2(K_{3d}^3)$ and order  $\{f_i^{a,b}\}_{1\leq i\leq d}$ over the index $1\leq i\leq d$. If we denote such a collection  of rows by $f^{a,b}$, we can order them so it matches the order of the vector equations $\mathcal{E}_{a,b}$ in the matrix $A(\omega_{(\mathcal{H}_1,\mathcal{H}_2,\dots, \mathcal{H}_d)})$ (see Remark \ref{MatrixNotationRemark}).

Finally, notice that if in the matrix
 \[ A_{3d}((v_{i,j,k})_{1\leq i<j<k\leq 3d})=\bordermatrix{ &  & \mathcal{C}_{x,y,z} &  \cr
           & \vdots & 0 & \vdots \cr
       \mathcal{E}_{x,y} & \dots & (-1)^{z+1}v_{x,y,z}  & \dots \cr
       & \vdots & 0 & \vdots \cr
			   \mathcal{E}_{x,z}& \dots & (-1)^yv_{x,y,z}& \dots \cr
       & \vdots & 0 & \vdots \cr
			    \mathcal{E}_{y,z} &  \dots & (-1)^{x-1}v_{x,y,z}& \dots \cr
       & \vdots & 0 & \vdots  }, \qquad 
\]
we multiply the column $\mathcal{C}_{x,y,z}$ by $(-1)^{x+y+z+1}$, and the rows corresponding to the equation $\mathcal{E}_{x,y}$  by $(-1)^{x+y}$ we get the matrix 
 \[ \bordermatrix{ &  & \mathcal{C}_{x,y,z} &  \cr
           & \vdots & 0 & \vdots \cr
       \mathcal{E}_{x,y} & \dots & v_{x,y,z}  & \dots \cr
       & \vdots & 0 & \vdots \cr
			   \mathcal{E}_{x,z}& \dots & -v_{x,y,z}& \dots \cr
       & \vdots & 0 & \vdots \cr
			    \mathcal{E}_{y,z} &  \dots & v_{x,y,z}& \dots \cr
       & \vdots & 0 & \vdots  }. \qquad 
\]

Moreover, if  $\{x,y,z\}\in E_3(\mathcal{H}_j)$,  we have that $v_{x,y,z}=e_j$, and so this matrix becomes
\[ \bordermatrix{ &  & \mathcal{C}_{x,y,z} &  \cr
           & \vdots & 0 & \vdots \cr
       \mathcal{E}_{x,y} & \dots & e_j  & \dots \cr
       & \vdots & 0 & \vdots \cr
			   \mathcal{E}_{x,z}& \dots & -e_j& \dots \cr
       & \vdots & 0 & \vdots \cr
			    \mathcal{E}_{y,z} &  \dots & e_j& \dots \cr
       & \vdots & 0 & \vdots  }, \qquad 
\]
which can be identified with the the matrix associated to  $\bigoplus_{1\leq i\leq d}\partial_2^i$
\[ \bordermatrix{ &  & f_j^{x,y,z} &  \cr
           & \vdots & 0 & \vdots \cr
       f_j^{x,y} & \dots & 1  & \dots \cr
       & \vdots & 0 & \vdots \cr
			   f_j^{x,z}& \dots & -1& \dots \cr
       & \vdots & 0 & \vdots \cr
			    f_j^{y,z} &  \dots & 1& \dots \cr
       & \vdots & 0 & \vdots  }. \qquad 
\] 
In particular, we have that \[rank (\bigoplus_{1\leq i\leq d}\partial_2^i)=rank (A(\omega_{(\mathcal{H}_1,\mathcal{H}_2,\dots, \mathcal{H}_d)})).\]
\end{proof}

The following Lemma will be used in this section for $r=3$, and will play an important role when studding the combinatorial interpretation of the map $det^{S^r}$ discussed in Section \ref{section4}. Notice that the cases $d=1$ and $d=2$ are rather well known, while the case $r=3$ can be easily checked by direct computation.  The proof of the general result is elementary, but we were not able to find a reference.  
\begin{lemma} \label{lemmaCombrd}Let $r$ and $d$ be two positive integers. Then 
\begin{eqnarray}
\sum_{k=0}^{r-1}  (-1)^k\binom{rd}{k}+\frac{(-1)^r}{d}\binom{rd}{r}=0. \label{EquationRD}
\end{eqnarray}
\end{lemma}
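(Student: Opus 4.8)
The plan is to prove the identity
\[
\sum_{k=0}^{r-1}(-1)^k\binom{rd}{k}+\frac{(-1)^r}{d}\binom{rd}{r}=0
\]
by first rewriting the alternating partial sum of binomial coefficients in closed form and then checking that the closed form matches $-\tfrac{(-1)^r}{d}\binom{rd}{r}$. The key classical fact I would invoke is the partial-sum identity
\[
\sum_{k=0}^{m}(-1)^k\binom{N}{k}=(-1)^m\binom{N-1}{m},
\]
valid for all $N\ge 1$ and $m\ge 0$, which is proved by a one-line induction on $m$ using Pascal's rule $\binom{N-1}{m}+\binom{N-1}{m-1}=\binom{N}{m}$ (or by telescoping). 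Applying this with $N=rd$ and $m=r-1$ gives
\[
\sum_{k=0}^{r-1}(-1)^k\binom{rd}{k}=(-1)^{r-1}\binom{rd-1}{r-1}.
\]

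It then remains to verify the purely algebraic identity $(-1)^{r-1}\binom{rd-1}{r-1}=-\tfrac{(-1)^r}{d}\binom{rd}{r}$, i.e.
\[
\binom{rd-1}{r-1}=\frac{1}{d}\binom{rd}{r}.
\]
This is immediate from the factorial formula: $\binom{rd}{r}=\frac{(rd)!}{r!\,(rd-r)!}=\frac{rd}{r}\cdot\frac{(rd-1)!}{(r-1)!\,(rd-r)!}=d\binom{rd-1}{r-1}$, where I use $(rd-r)!=(rd-1-(r-1))!$ so the denominator matches. Substituting back, $\sum_{k=0}^{r-1}(-1)^k\binom{rd}{k}=(-1)^{r-1}\cdot\frac{1}{d}\binom{rd}{r}=-\frac{(-1)^r}{d}\binom{rd}{r}$, which rearranges to the claimed identity.

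There is essentially no serious obstacle here: the proof is a two-step reduction (partial-sum formula, then a factorial manipulation), and the only points requiring a line of care are the induction establishing the partial-sum identity and keeping the sign bookkeeping straight in $(-1)^{r-1}$ versus $(-1)^r$. If one prefers a self-contained argument avoiding the cited partial-sum identity, an alternative is to prove the whole statement by induction on $r$ for fixed $d$ (base case $r=1$ reads $1-\binom{d}{1}/d=0$), using Pascal's rule to relate the $r$ and $r-1$ sums; I would mention this as a remark but carry out the shorter route above as the main proof.
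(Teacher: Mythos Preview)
Your proof is correct and follows essentially the same approach as the paper: the paper's proof is the one-line hint ``use $\binom{rd}{k}=\binom{rd-1}{k-1}+\binom{rd-1}{k}$ and $\frac{1}{d}\binom{rd}{r}=\binom{rd-1}{r-1}$,'' and your partial-sum identity $\sum_{k=0}^{m}(-1)^k\binom{N}{k}=(-1)^m\binom{N-1}{m}$ is exactly what Pascal's rule gives by telescoping, after which both arguments finish with the same factorial identity.
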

\begin{proof}
Use the fact that $\binom{rd}{k}=\binom{rd-1}{k-1}+\binom{rd-1}{k}$, and $\frac{1}{d}\binom{rd}{r}=\binom{rd-1}{r-1}$.
\end{proof}

Next we state the main result of this section.
\begin{theorem} \label{ThComb} Let $(\mathcal{H}_1,\mathcal{H}_2,\dots, \mathcal{H}_d)$ be a $d$-partition of $K_{3d}^3$. The following are equivalent
\begin{enumerate} 
\item $det^{S^3}(\omega_{(\mathcal{H}_1,\mathcal{H}_2,\dots, \mathcal{H}_d)})\neq 0$.
\item  $(\mathcal{H}_1,\mathcal{H}_2,\dots \mathcal{H}_d)$ is pre-homogeneous, and  $b_k(\mathcal{H}_i)=0$ for every $1\leq i\leq d$, and $-1\leq k\leq 2$.
\item  $(\mathcal{H}_1,\mathcal{H}_2,\dots, \mathcal{H}_d)$ is pre-homogeneous, and  $b_2(\mathcal{H}_i)=0$ for every $1\leq i\leq d$.
\end{enumerate}
\end{theorem}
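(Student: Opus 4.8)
The plan is to prove the cycle of implications $(3)\Rightarrow(2)\Rightarrow(1)\Rightarrow(3)$, with the bulk of the work split between the equivalence of $(1)$ with a rank condition on $A(\omega_{\mathcal P})$ (via Lemmas \ref{lemmaPreH1} and \ref{lemmaPreH2}) and an Euler-characteristic argument that uses Lemma \ref{lemmaCombrd} with $r=3$ to pin down the dimensions of the chain groups. First I would handle the easy reductions: by Lemma \ref{lemmaPreH1}, in all three statements we may assume $\mathcal P=(\mathcal H_1,\dots,\mathcal H_d)$ is pre-homogeneous, since otherwise $det^{S^3}(\omega_{\mathcal P})=0$ and, as each $\mathcal H_i$ then fails to have the full $1$-skeleton of $K_{3d}^3$, at least one reduced Betti number $b_k(\mathcal H_i)$ with $-1\le k\le 1$ is forced to be nonzero (a non-pre-homogeneous partition cannot have all of $X(\mathcal H_i)$ connected with trivial $H_1$ while missing edges or vertices in the required counts), so $(2)$ and $(3)$ also fail. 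Thus throughout we assume pre-homogeneity.

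Next I would establish $(1)\Leftrightarrow$ ``$\partial_2^i$ is injective for all $i$''. By Lemma \ref{lemmaPreH2}, under pre-homogeneity $rank(A(\omega_{\mathcal P}))=rank(\bigoplus_i\partial_2^i)=\sum_i rank(\partial_2^i)$. Since $A_{3d}(\omega_{\mathcal P})$ is a square $d\binom{3d-1}{2}\times d\binom{3d-1}{2}$ matrix whose rank equals that of the full matrix $A(\omega_{\mathcal P})$ (the deleted equations $\mathcal E_{m,3d}$ being linear combinations of the retained ones by Lemma \ref{eqRelationsLemma}), we get $det^{S^3}(\omega_{\mathcal P})\neq0$ iff $rank(\bigoplus_i\partial_2^i)=d\binom{3d-1}{2}$. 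Now $\partial_2^i$ maps $\bigoplus_{\{a,b,c\}\in E_3(\mathcal H_i)}kf_i^{a,b,c}$ into the $\binom{3d}{2}$-dimensional space of $1$-chains, so $rank(\partial_2^i)\le |E_3(\mathcal H_i)|$; summing, $rank(\bigoplus_i\partial_2^i)\le\sum_i|E_3(\mathcal H_i)|=|E_3(K_{3d}^3)|=d\binom{3d-1}{2}$, with equality iff each $\partial_2^i$ is injective. Hence $(1)$ holds iff every $\partial_2^i$ is injective, i.e. $H_2(\mathcal H_i)=0$ for all $i$, which is exactly $b_2(\mathcal H_i)=0$; this gives $(1)\Leftrightarrow$ the first half of $(3)$.

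It remains to show that for a pre-homogeneous partition, $b_2(\mathcal H_i)=0$ for all $i$ already forces $b_k(\mathcal H_i)=0$ for all $-1\le k\le 2$ and all $i$ — this is where Lemma \ref{lemmaCombrd} enters and is the main obstacle. The chain complex $\mathcal K_i$ from Remark \ref{rem48} has chain groups of dimensions $1,3d,\binom{3d}{2},|E_3(\mathcal H_i)|$ in degrees $-1,0,1,2$; its reduced Euler characteristic is
\begin{equation}
-1+3d-\binom{3d}{2}+|E_3(\mathcal H_i)|=\sum_{k=-1}^{2}(-1)^{k+1}b_k(\mathcal H_i).
\end{equation}
I would argue that pre-homogeneity plus $b_2(\mathcal H_i)=0$ for all $i$ forces $|E_3(\mathcal H_i)|=\binom{3d-1}{2}$ for every $i$: indeed $\sum_i|E_3(\mathcal H_i)|=d\binom{3d-1}{2}$ is fixed, and since each $\partial_2^i$ is injective we get $|E_3(\mathcal H_i)|=rank(\partial_2^i)=\binom{3d}{2}-\dim H_1(\mathcal K_i)-\dim\ker\partial_1^i\big/\dots$; more directly, the inequality chain $rank(\bigoplus_i\partial_2^i)\le\sum_i|E_3(\mathcal H_i)|=d\binom{3d-1}{2}=rank(\bigoplus_i\partial_2^i)$ already forces $|E_3(\mathcal H_i)|=rank(\partial_2^i)$, hence injectivity, and the homogeneity conclusion $|E_3(\mathcal H_i)|=\binom{3d-1}{2}$ follows because the full rank $d\binom{3d-1}{2}$ splits as a sum of $d$ equal ranks only when... — here one must be slightly careful, so instead I would use: $(1)$ is symmetric in a relabeling argument, or simply observe $rank(\partial_2^i)\le\binom{3d}{2}-\big(3d-1\big)$ always (the image lies in $\ker\partial_1^i$ whenever $X_1$ is connected, which holds since $E_2(\mathcal H_i)=E_2(K_{3d}^3)$), giving $rank(\partial_2^i)\le\binom{3d-1}{2}$, and summing forces equality for each $i$, hence $|E_3(\mathcal H_i)|=\binom{3d-1}{2}$ (so $\mathcal P$ is homogeneous) and $\partial_2^i$ injective with image exactly $\ker\partial_1^i$. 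Then $H_2(\mathcal K_i)=0$, $H_1(\mathcal K_i)=0$, and since $X_0=\{1,\dots,3d\}$ is nonempty and $X_1$ is the complete graph $K_{3d}$ we have $H_0(\mathcal K_i)=\tilde H_0=0$ and $H_{-1}=0$; thus all reduced Betti numbers vanish, establishing $(3)\Rightarrow(2)$. The implication $(2)\Rightarrow(1)$ is then immediate since $b_2(\mathcal H_i)=0$ for all $i$ gives each $\partial_2^i$ injective, hence full rank, hence $det^{S^3}(\omega_{\mathcal P})\neq0$. The one genuinely delicate point to write carefully is the use of Lemma \ref{lemmaCombrd} (with $r=3$: $1-3d+\binom{3d}{2}\cdot(-1)+\frac{-1}{d}\binom{3d}{3}$ reorganized) to confirm that $\binom{3d-1}{2}$ is precisely the number making the Euler characteristic of each $X(\mathcal H_i)$ vanish, which is what ties ``homogeneous'' to ``all Betti numbers zero'' and closes the argument.
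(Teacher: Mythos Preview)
Your argument is correct and largely parallels the paper's, but the step showing $b_1(\mathcal H_i)=0$ is organized differently. The paper computes the Euler characteristic of the \emph{sum} complex $\mathcal K=\bigoplus_i\mathcal K_i$, invokes Lemma~\ref{lemmaCombrd} with $r=3$ to see $\chi_{\mathcal K}=0$, and then uses $b_{-1}=b_0=b_2=0$ to conclude $\sum_i b_1(\mathcal H_i)=0$. You instead bound each piece directly: since the $1$-skeleton is the complete graph $K_{3d}$, $\dim\ker\partial_1^i=\binom{3d}{2}-(3d-1)=\binom{3d-1}{2}$, so $\operatorname{rank}\partial_2^i\le\binom{3d-1}{2}$; summing and comparing with $\sum_i|E_3(\mathcal H_i)|=\binom{3d}{3}=d\binom{3d-1}{2}$ forces equality, giving $\operatorname{im}\partial_2^i=\ker\partial_1^i$ and hence $b_1(\mathcal H_i)=0$. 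This is a perfectly good alternative and in fact does \emph{not} require Lemma~\ref{lemmaCombrd}: the two identities you use are just Pascal's rule and $\binom{3d}{3}=d\binom{3d-1}{2}$. So your closing remark that Lemma~\ref{lemmaCombrd} is ``the one genuinely delicate point'' is misplaced---your route bypasses it entirely.

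Two presentational issues. First, your reduction to the pre-homogeneous case is overcomplicated and the parenthetical justification is wrong: a non-pre-homogeneous partition can certainly have some $X(\mathcal H_i)$ connected with trivial $H_1$ (for example, contractible) while missing an edge in $E_2$. The correct observation is simply that $(2)$ and $(3)$ \emph{include} pre-homogeneity as a hypothesis, so they fail trivially when it fails; no Betti-number claim is needed there. Second, the write-up contains visible false starts (``here one must be slightly careful, so instead\dots'', the abandoned displayed computation of $\operatorname{rank}\partial_2^i$). The final version should present only the clean rank-bound argument and drop the Euler-characteristic detour you don't actually use.
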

\begin{proof} 
Let's assume that $det^{S^3}(\omega_{(\mathcal{H}_1,\mathcal{H}_2,\dots \mathcal{H}_d)})\neq 0$, then by Lemma \ref{lemmaPreH1} we know that the partition $(\mathcal{H}_1,\mathcal{H}_2,\dots \mathcal{H}_d)$  must be pre-homogeneous. In particular, it follows that  for each $1\leq i\leq d$ the $1$-skeleton of $X(\mathcal{H}_i)$ coincide with the $1$-skeleton of $\Delta_{3d-1}$, and so we have that $b_{-1}(\mathcal{H}_i)=b_0(\mathcal{H}_i)=0$. 

Since $(\mathcal{H}_1,\mathcal{H}_2,\dots \mathcal{H}_d)$  is pre-homogeneous, to compute the homology of $X(\mathcal{H}_i)$ we can use the complex $\mathcal{K}_i$
\[ 0\to \bigoplus_{\{a,b,c\}\in E_3(\mathcal{H}_i)}kf_{i}^{a,b,c}\stackrel{\partial_2^i}{\longrightarrow} \bigoplus_{1\leq a<b\leq 3d}kf_{i}^{a,b} \stackrel{\partial_1^i}{\longrightarrow} \bigoplus_{1\leq a\leq 3d}kf_{i}^{a}\stackrel{\partial_0^i}{\longrightarrow} kf_{i}^{\emptyset}\to 0.\]

It follows from Lemma \ref{lemmaPreH2}
that $$rank (\bigoplus_{1\leq i\leq d}\partial_2^i)=rank (A(\omega_{(\mathcal{H}_1,\mathcal{H}_2,\dots, \mathcal{H}_d)}))=d\frac{(3d-1)(3d-2)}{2},$$ in other words the map $\bigoplus_{1\leq i\leq d}\partial_2^i$ is one to one.  This implies that each map $$\partial_2^i:\bigoplus_{\{a,b,c\}\in E_3(\mathcal{H}_i)}kf_{i}^{a,b,c}
\to \bigoplus_{1\leq a<b\leq 3d}kf_{i}^{a,b}$$ is one to one, in particular we get that $b_2(\mathcal{H}_i)=0$. 

Finally, notice that if we take $\mathcal{K}$ to be the direct sum $\bigoplus_{1\leq i\leq d}(\mathcal{K}_i)$ we get the complex 
\[ 0\to \bigoplus\limits_{i=1}^d(\bigoplus_{\{a,b,c\}\in E_3(\mathcal{H}_i)}kf_{i}^{a,b,c})\stackrel{\bigoplus\limits_{i=1}^d\partial_2^i}{\longrightarrow} \bigoplus\limits_{i=1}^d(\bigoplus_{1\leq a<b\leq 3d}kf_{i}^{a,b}) \stackrel{\bigoplus\limits_{i=1}^d\partial_1^i}{\longrightarrow} \bigoplus\limits_{i=1}^d(\bigoplus_{1\leq a\leq 3d}kf_{i}^{a})\stackrel{\bigoplus\limits_{i=1}^d\partial_0^i}{\longrightarrow}\bigoplus\limits_{i=1}^d(kf_{i}^{\emptyset})\to 0,\]
which is the complex associated to $X(K_{3d}^3)$, and whose Euler characteristic is $$\chi_{\mathcal{K}}=d-d(3d)+d\frac{3d(3d-1)}{2}-\frac{3d(3d-1)(3d-2)}{6}.$$ By Lemma \ref{lemmaCombrd} for $r=3$ we get  $\chi_{\mathcal{K}}=0$. 

On the other hand, we can use Betti numbers to compute 
$$\chi_{\mathcal{K}}=\sum_{i=1}^d(b_{-1}(\mathcal{H}_i)-b_{0}(\mathcal{H}_i)+b_{1}(\mathcal{H}_i)+b_{2}(\mathcal{H}_i)).$$We already proved that  $b_{-1}(\mathcal{H}_i)=b_{0}(\mathcal{H}_i)=b_{2}(\mathcal{H}_i)=0$ for all $1\leq i\leq d$, and so we get $$\sum_{i=1}^db_{1}(\mathcal{H}_i)=0.$$ Since Betti numbers are non-negative we have $b_{1}(\mathcal{H}_i)=0$ for all $1\leq i\leq d$.

Conversely, let's assume that $(\mathcal{H}_1,\mathcal{H}_2,\dots, \mathcal{H}_d)$ is pre-homogeneous and  $b_2(\mathcal{H}_i)=0$ for all $1\leq i\leq d$. 

Because the partition is pre-homogeneous, in order to compute $b_2(\mathcal{H}_i)$ we can use the morphism 
\[ 0\to \bigoplus_{\{a,b,c\}\in E_3(\mathcal{H}_i)}kf_{i}^{a,b,c}\stackrel{\partial_2^i}{\longrightarrow} \bigoplus_{1\leq a<b\leq 3d}kf_{i}^{a,b}.\]
Since $b_2(\mathcal{H}_i)=0$ for all $1\leq i\leq d$, it follows that the map $\partial_2^i$ is one to one for each $1\leq i\leq d$. This implies that the map $\bigoplus_{1\leq i\leq d}\partial_2^i$ is one to one, and so, by Lemma \ref{lemmaPreH2} we get that 
$$rank (A(\omega_{(\mathcal{H}_1,\mathcal{H}_2,\dots, \mathcal{H}_d)}))=rank (\bigoplus_{1\leq i\leq d}\partial_2^i)=d\frac{(3d-1)(3d-2)}{2},$$ which shows that  $det^{S^3}(\omega_{(\mathcal{H}_1,\mathcal{H}_2,\dots, \mathcal{H}_d)})\neq 0$. 
\end{proof}

\begin{corollary} Let $(\mathcal{H}_1,\mathcal{H}_2,\dots, \mathcal{H}_d)$ be a $d$-partition of $K_{3d}^3$ such that  $det^{S^3}(\omega_{(\mathcal{H}_1,\mathcal{H}_2,\dots, \mathcal{H}_d)})\neq 0$. Then $(\mathcal{H}_1,\mathcal{H}_2,\dots, \mathcal{H}_d)$ is a homogeneous $d$-partition.
\end{corollary}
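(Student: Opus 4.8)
The plan is to extract everything we need from Theorem \ref{ThComb} and then run an Euler characteristic count. By Theorem \ref{ThComb}, the hypothesis $det^{S^3}(\omega_{(\mathcal{H}_1,\dots,\mathcal{H}_d)})\neq 0$ already gives us that $(\mathcal{H}_1,\dots,\mathcal{H}_d)$ is pre-homogeneous and that $b_k(\mathcal{H}_i)=0$ for all $1\leq i\leq d$ and all $-1\leq k\leq 2$. So the only thing left to verify is the homogeneity condition $\vert E_3(\mathcal{H}_i)\vert=\frac{(3d-1)(3d-2)}{2}$ for each $i$.

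Fix $1\leq i\leq d$. Since the partition is pre-homogeneous, the reduced chain complex $\mathcal{K}_i$ from Remark \ref{rem48} has exactly $1$ generator in degree $-1$, exactly $3d$ generators in degree $0$, exactly $\binom{3d}{2}=\frac{3d(3d-1)}{2}$ generators in degree $1$, and exactly $\vert E_3(\mathcal{H}_i)\vert$ generators in degree $2$. First I would compute the reduced Euler characteristic of $\mathcal{K}_i$ by the alternating sum of the dimensions of its terms, obtaining
\[
\chi(\mathcal{K}_i)=-1+3d-\frac{3d(3d-1)}{2}+\vert E_3(\mathcal{H}_i)\vert .
\]
Then I would compute the same Euler characteristic via the reduced Betti numbers $b_{-1}(\mathcal{H}_i),b_0(\mathcal{H}_i),b_1(\mathcal{H}_i),b_2(\mathcal{H}_i)$, all of which vanish by the first step; hence $\chi(\mathcal{K}_i)=0$. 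Equating the two expressions and solving for $\vert E_3(\mathcal{H}_i)\vert$ gives
\[
\vert E_3(\mathcal{H}_i)\vert=1-3d+\frac{3d(3d-1)}{2},
\]
and a short simplification (or an application of Lemma \ref{lemmaCombrd} with $r=3$, which states precisely $1-3d+\frac{3d(3d-1)}{2}=\frac{1}{d}\binom{3d}{3}=\binom{3d-1}{2}$) identifies the right-hand side with $\frac{(3d-1)(3d-2)}{2}$. Since $i$ was arbitrary, every $\mathcal{H}_i$ satisfies the homogeneity condition, so $(\mathcal{H}_1,\dots,\mathcal{H}_d)$ is homogeneous.

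Honestly, I do not expect any real obstacle here: the statement is a formal consequence of Theorem \ref{ThComb} together with the fact that the Euler characteristic of a complex equals the alternating sum of its (reduced) Betti numbers. The only points requiring a little care are the bookkeeping of signs in the reduced Euler characteristic (the degree $-1$ term enters with a minus sign) and making sure the pre-homogeneity hypothesis is what pins down the dimensions of the degree $0$ and degree $1$ parts of $\mathcal{K}_i$; after that the arithmetic is immediate, and Lemma \ref{lemmaCombrd} handles the final identity cleanly.
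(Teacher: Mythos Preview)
Your proposal is correct and matches the paper's proof essentially line for line: invoke Theorem \ref{ThComb} to obtain pre-homogeneity and vanishing Betti numbers, compute the Euler characteristic $\chi(\mathcal{K}_i)$ both from the chain dimensions and from the Betti numbers, equate to zero, and solve for $\vert E_3(\mathcal{H}_i)\vert$ using Lemma \ref{lemmaCombrd}. The only cosmetic difference is an overall sign in your Euler characteristic convention (the paper writes $\chi_{\mathcal{K}_i}=1-3d+\tfrac{3d(3d-1)}{2}-\vert E_3(\mathcal{H}_i)\vert$), which is immaterial since the expression is set equal to zero.
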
 
\begin{proof} From Theorem  \ref{ThComb}  we know that  $(\mathcal{H}_1,\mathcal{H}_2,\dots, \mathcal{H}_d)$ is pre-homogeneous and $b_k(\mathcal{H}_i)=0$ for all $1\leq i\leq d$, and $-1\leq k\leq 2$. In particular we have that the Euler characteristic $\chi_{\mathcal{K}_i}$ is zero. 

Since the partition $(\mathcal{H}_1,\mathcal{H}_2,\dots, \mathcal{H}_d)$  is pre-homogeneous we have that  $|E_1(\mathcal{H}_i)|=3d$, $|E_2(\mathcal{H}_i)|=\frac{3d(3d-1)}{2}$. From Remark \ref{rem48} we can compute the Euler characteristics $\chi_{\mathcal{K}_i}$ as 
$$\chi_{\mathcal{K}_i}=1-3d+\frac{3d(3d-1)}{2}-|E_3(\mathcal{H}_i)|.$$
By Lemma \ref{lemmaCombrd} for $r=3$ we get $$|E_3(\mathcal{H}_i)|=\frac{1}{d}\binom{3d}{3}=\frac{(3d-1)(3d-2)}{2},$$ in other words $(\mathcal{H}_1,\mathcal{H}_2,\dots, \mathcal{H}_d)$ is a homogeneous $d$-partition.
\end{proof}

\section{Construction and Properties of a Map $det^{S^r}$}
\label{section4}
 In this section, we present a construction for a map $det^{S^r}:V_d^{\otimes\binom{rd}{r}}\to k$ which generalizes the determinant map, the map $det^{S^2}$ given in \cite{dets2}, and the map $det^{S^3}$ given in Section 3. 
Specifically, we will construct a linear map $det^{S^r}:V_d^{\otimes\binom{rd}{r}}\to k$ with the property that $det^{S^r}((v_{i_1,\ldots,i_r})_{1\leq i_1<i_2<\ldots<i_r\leq rd})=0$ if there exists $1\leq x_1<x_2<\ldots<x_{r+1}\leq rd$ such that
 $$v_{x_1,x_2,\ldots,x_r}=v_{x_1,\ldots,x_{r-1},x_{r+1}}=v_{x_1,\ldots,x_{r-2},x_r,x_{r+1}}=\ldots=v_{x_1,x_3,\ldots,x_r,x_{r+1}}=v_{x_2,x_3,\ldots,x_{r+1}}.$$ 
Notice that if a nontrivial linear map with this property is constructed, then we have $dim_k\left(\Lambda^{S^r}_{V_d}[rd]\right)\geq1$ for all $r\geq 1$ and $d\geq 1$. This would partially answer a conjecture from \cite{S3}. It is not clear if the map we propose is always  nontrivial. In Appendix we analyze a few particular cases, and in all of them our map is nontrivial.   

In the second part of this section we  investigate a combinatorial property of the map $det^{S^r}$ similar to the one discussed in  Section \ref{SectionCombS3} for $det^{S^3}$.

 \subsection{Construction of $det^{S^r}$}
Fix $r\geq2$. For each $1\leq m_1<m_2<\ldots<m_{r-1}\leq rd$ and $v_{i_1,\dots,i_r}\in V_d$ for $1\leq i_1<i_2<\ldots<i_r\leq rd$, let the vector equation $\mathcal{E}_{m_1,\dots,m_{r-1}}$ be defined as
\begin{align}
    \sum_{s=1}^{m_1-1}(-1)^{s-1}&\lambda_{s,m_1,m_2,\dots,m_{r-1}}v_{s,m_1,m_2,\dots,m_{r-1}}+\sum_{s=m_1+1}^{m_2-1}(-1)^s\lambda_{m_1,s,m_2,\dots,m_{r-1}}v_{m_1,s,m_2\dots,m_{r-1}}+\dots\nonumber\\ \nonumber\\
    &+\sum_{s=m_{r-2}+1}^{m_{r-1}-1}(-1)^{(s-1)+(r-2)}\lambda_{m_1,m_2,\dots,s,m_{r-1}}v_{r,m_1,\dots,s,m_{r-1}}\label{relEqSr}\\ 
    \nonumber\\
    &+\sum_{s=m_{r-1}+1}^{rd}(-1)^{(s-1)+(r-1)}\lambda_{m_1,m_2,\dots,m_{r-1},s}v_{m_1,m_2\dots,m_{r-1},s}=0.\nonumber
\end{align}
By abuse of notation, we take $\mathcal{E}_{m_1,\dots,m_{r-1}}$ to also denote the left side of Equation (\ref{relEqSr}). Let $\mathcal{S}((v_{i_1,\dots,i_r})_{1\leq i_1<i_2<\dots<i_r\leq rd})$ be the system of equations given by $\mathcal{E}_{m_1,\dots,m_{r-1}}$ for all $1\leq m_1<\dots<m_{r-1}\leq rd$. Take $A((v_{i_1,\dots,i_r})_{1\leq i_1<i_2<\dots<i_r\leq rd})$ be the associated matrix.
\begin{remark}\label{NewMatRemark}
 Similar to Remark \ref{MatrixNotationRemark}, we order the rows and columns of $A((v_{i_1,\dots,i_r})_{1\leq i_1<i_2<\dots<i_r\leq rd})$ by the dictionary order on the indices of $\mathcal{E}_{m_1,\ldots,m_{r-1}}$ and $\lambda_{i_1,\ldots,i_r}$, respectively.
\end{remark}

Fix $n_1,\dots,n_{r-2}$. Similar to the case $r=2$ with Equation (\ref{dets2Eq}), and the case $r=3$ with Equation (\ref{relations}), we have the following relation $\mathcal{R}_{n_1,\dots,n_{r-2}}$:
\begin{equation}\label{Srrelations}
\begin{aligned}
    \sum_{s=1}^{n_1-1}(-1)^s\mathcal{E}_{s,n_1,\dots,n_{r-2}}&+\sum_{s=n_1+1}^{n_2-1}(-1)^{s+1}\mathcal{E}_{n_1,s,n_2,\dots,n_{r-2}}+\dots\aspace
    +\sum_{s=n_{r-3}+1}^{n_{r-2}-1}(-1)^{s+r-3}\mathcal{E}_{n_1,\dots,n_{r-3},s,n_{r-2}}&+\sum_{s=n_{r-2}+1}^{rd}(-1)^{s+r-2}\mathcal{E}_{n_1,\dots,n_{r-3},n_{r-2},s}=0
\end{aligned}
\end{equation}
In particular,  the equation $\mathcal{E}_{m_1,\ldots,m_{r-2},rd}$, for $1\leq m_1<m_2<\ldots<m_{r-2}<rd$,
is a consequence of the equations $\mathcal{E}_{n_1,\ldots,n_{r-2},n_{r-1}}$ for $1\leq n_1<n_2<\ldots<n_{r-1}<rd$. 
 So, we need not study the equations $\mathcal{E}_{m_1,\ldots,m_{r-2},rd}$ for all $1\leq m_1<m_2<\ldots<m_{r-2}<rd$ to study the system $\mathcal{S}((v_{i_1,\dots,i_r})_{1\leq i_1<i_2<\dots<i_r\leq rd})$. This leads to the following definition.

\begin{definition}\label{mainSrDefs}
 Let $r\geq 2$ and $v_{i_1,\dots,i_r}\in V_d$ for all $1\leq i_1<\dots<i_r\leq rd$. Take  the system $\mathcal{S}_{rd}((v_{i_1,\dots,i_r})_{1\leq i_1<\dots<i_r\leq rd})$ consisting from the  equations $\mathcal{E}_{m_1,m_2,\dots,m_{r-1}}$, where $1\leq m_1<m_2<\ldots<m_{r-1}<rd$. Denote the associated $d\binom{rd-1}{r-1}\times d\binom{rd-1}{r-1}$  matrix as $A_{rd}((v_{i_1,\dots,i_r})_{1\leq i_1<\dots<i_r\leq rd})$. Define the map $det^{S^r}:V_d^{\binom{rd}{r}}\to k$ by 
 \begin{equation}
     det^{S^r}((v_{i_1,\dots,i_r})_{1\leq i_1<\dots<i_r\leq rd})=det(A_{rd}((v_{i_1,\dots,i_r})_{1\leq i_1<\dots<i_r\leq rd})).
 \end{equation}
\end{definition}
\begin{remark} Note that when $r=2$, we get the $det^{S^2}$ map from \cite{dets2}, and when $r=3$, we get the map $det^{S^3}$ described in Section \ref{dets3Construct}.
\end{remark}
\begin{remark}
    The map $det^{S^r}$ is multilinear by properties of determinants. By abuse of notation, we will also denote the induced linear map $V_d^{\otimes\binom{rd}{r}}\to k$ by $det^{S^r}$.
\end{remark}

\subsection{Properties of $det^{S^r}$}

First, we generalize Lemma \ref{universality}.

\begin{lemma}\label{SrUniversality}
 The map $det^{S^r}$ has the property that $det^{S^r}((v_{i_1,\ldots,i_r})_{1\leq i_1<i_2<\ldots<i_r\leq rd})=0$ if there exists $1\leq x_1<x_2<\ldots<x_{r+1}\leq rd$ such that
 $$v_{x_1,x_2,\ldots,x_r}=v_{x_1,\ldots,x_{r-1},x_{r+1}}=v_{x_1,\ldots,x_{r-2},x_r,x_{r+1}}=\ldots=v_{x_1,x_3,\ldots,x_r,x_{r+1}}=v_{x_2,x_3,\ldots,x_{r+1}}.$$
\end{lemma}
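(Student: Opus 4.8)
The plan is to mimic exactly the argument for Lemma \ref{universality} (the case $r=3$), replacing the four-term column relation by an $(r+1)$-term one. First I would fix $1\leq x_1<x_2<\ldots<x_{r+1}\leq rd$ satisfying the chain of equalities in the hypothesis, and write $v$ for the common value of all $r+1$ vectors $v_{x_1,\ldots,\widehat{x_p},\ldots,x_{r+1}}$, where $\widehat{x_p}$ means $x_p$ is omitted. For each $1\leq p\leq r+1$ let $c_p$ denote the column of $A((v_{i_1,\dots,i_r})_{1\leq i_1<\dots<i_r\leq rd})$ indexed by the $r$-subset $\{x_1,\ldots,x_{r+1}\}\setminus\{x_p\}$. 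The key computation is to show that
\begin{equation*}
\sum_{p=1}^{r+1}(-1)^{\epsilon_p}\,c_p=0
\end{equation*}
for an appropriate choice of signs $\epsilon_p$ (for $r=3$ the signs were $(-1)^t,-(-1)^z,(-1)^y,-(-1)^x$, i.e. built from the position of the omitted index). Concretely, I would check this row by row: a row corresponds to an equation $\mathcal{E}_{m_1,\dots,m_{r-1}}$, and its entry in column $c_p$ is nonzero only when $\{m_1,\dots,m_{r-1}\}\subseteq\{x_1,\ldots,x_{r+1}\}\setminus\{x_p\}$, i.e. when $\{m_1,\dots,m_{r-1}\}$ is obtained from $\{x_1,\ldots,x_{r+1}\}$ by deleting two of the $x$'s, say $x_p$ and $x_q$ with $p<q$. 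For such a row there are exactly two columns among the $c_p$'s with a nonzero entry, namely $c_p$ and $c_q$, and both entries are $\pm v$ times a sign determined by the position of the inserted index within the $r$-tuple (from the definition of $\mathcal{E}_{m_1,\dots,m_{r-1}}$ in Equation (\ref{relEqSr})). I would verify that with the choice $\epsilon_p$ = (number of indices of $\{x_1,\ldots,x_{r+1}\}$ less than or equal to $x_p$) these two contributions cancel; this is a purely combinatorial sign bookkeeping, entirely analogous to the three cases in the proof of Lemma \ref{eqRelationsLemma} and to Equation (\ref{S3Cols}).

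Once this column relation is established for the full matrix $A((v_{i_1,\dots,i_r})_{1\leq i_1<\dots<i_r\leq rd})$, I would note, exactly as in Lemma \ref{universality}, that passing to the square submatrix $A_{rd}((v_{i_1,\dots,i_r})_{1\leq i_1<\dots<i_r\leq rd})$ only deletes the rows corresponding to the equations $\mathcal{E}_{m_1,\ldots,m_{r-2},rd}$, so the same linear combination of the (truncated) columns still vanishes. Hence the columns of $A_{rd}$ are linearly dependent, $\det(A_{rd})=0$, and therefore $det^{S^r}((v_{i_1,\ldots,i_r}))=0$.

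The only real obstacle is the sign verification in the displayed column relation: one must pin down the sign attached to $v_{m_1,\dots,s,\dots,m_{r-1}}$ in $\mathcal{E}_{m_1,\dots,m_{r-1}}$ as a function of where the ``new'' index $s$ sits among $m_1,\dots,m_{r-1}$ (the exponents in Equation (\ref{relEqSr}) are $(s-1)+(\text{block number})$), and then show that when a row $\mathcal{E}$ arises from $\{x_1,\dots,x_{r+1}\}$ by deleting $x_p$ and $x_q$, the product of this insertion-sign with $\epsilon_p$ for the pair $(p)$ is the negative of the corresponding product for $(q)$. This is a finite, mechanical parity argument; I would organize it by cases according to whether each of $m_1,\dots,m_{r-1}$ appearing in a given summand of $\mathcal{E}$ lies before $x_p$, between $x_p$ and $x_q$, or after $x_q$, just as Lemma \ref{eqRelationsLemma} splits into Cases 1--3. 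Everything else is a direct transcription of the $r=3$ proof.
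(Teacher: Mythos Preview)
Your proposal is correct and follows essentially the same route as the paper's proof: exhibit an $(r+1)$-term linear dependence among the columns $c_{x_1,\ldots,\widehat{x_p},\ldots,x_{r+1}}$ of the full matrix $A$, and then observe that passing to $A_{rd}$ only deletes rows, so the dependence (and hence the vanishing of the determinant) survives. The paper states the relation with the explicit coefficients $(-1)^{r+1-p}(-1)^{x_p}$ (compare Equation~(\ref{SrMatrixRel}) with the $r=3$ relation~(\ref{S3Cols})); your choice $\epsilon_p=p$ is not quite the right exponent---the sign must also record the actual value $x_p$, since the entries of $\mathcal{E}_{m_1,\dots,m_{r-1}}$ carry signs $(-1)^{(s-1)+(\text{block})}$ depending on $s$ itself---but this is a minor bookkeeping correction and does not affect the structure of your argument.
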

\begin{proof}
The proof is similar with the one for Lemma \ref{universality}. Take $(v_{i_1,\ldots,i_r})_{1\leq i_1<i_2<\ldots<i_r\leq rd}\in V_d^{{\binom{rd}{r}}}$ such that 
\begin{equation}
    v_{x_1,x_2,\ldots,x_r}=v_{x_1,\ldots,x_{r-1},x_{r+1}}=v_{x_1,\ldots,x_{r-2},x_r,x_{r+1}}=\ldots=v_{x_1,x_3,\ldots,x_r,x_{r+1}}=v_{x_2,x_3,\ldots,x_{r+1}}=v,
\end{equation}
for some $1\leq x_1<x_2<\ldots<x_{r+1}\leq rd$. Consider the system $\mathcal{S}((v_{i_1,\ldots,i_r})_{1\leq i_1<i_2<\ldots<i_r\leq rd})$.

We note that the columns of $A((v_{i_1,\ldots,i_r})_{1\leq i_1<i_2<\ldots<i_r\leq rd})$ are linearly dependent. Indeed, if we denote the column corresponding to $\lambda_{i_1,\ldots,i_r}$ in the matrix $A((v_{i_1,\ldots,i_r})_{1\leq i_1<i_2<\ldots<i_r\leq rd})$ as $c_{i_1,\ldots,i_r}$, then we have that
\begin{align}
    (-1)^{x_{r+1}}c_{x_1,x_2,\ldots,x_{r-1},x_r}&-(-1)^{x_r}c_{x_1,x_2,\ldots,x_{r-1},x_{r+1}}+\ldots\nonumber\\
    &+(-1)^{r-1}(-1)^{x_2}c_{x_1,x_3,x_4,\ldots,x_{r+1}}+(-1)^{(r+1)-1}(-1)^{x_1}c_{x_2,x_3,\ldots,x_{r+1}}=0.\label{SrMatrixRel}
\end{align}
Further, since $\mathcal{S}_{rd}((v_{i_1,\ldots,i_r})_{1\leq i_1<\ldots<i_r\leq rd})$ is obtained by excluding vector equations from $\allowbreak\mathcal{S}((v_{i_1,\ldots,i_r})_{1\leq i_1<\ldots<i_r\leq rd})$, we have that the columns of $A_{rd}((v_{i_1,\ldots,i_r})_{1\leq i_1<i_2<\ldots<i_r\leq rd})$ are linearly dependent as well. Hence $$det^{S^r}((v_{i_1,\ldots,i_r})_{1\leq i_1<i_2<\ldots<i_r\leq rd})=det\left(A_{rd}((v_{i_1,\ldots,i_r})_{1\leq i_1<i_2<\ldots<i_r\leq rd})\right)=0.$$ 
\end{proof}

Next, notice that  there is a $GL_d(k)$ action on $\mathcal{T}^{S^r}_{V_d}[rd]$ given by
\[T\ast(\otimes_{1\leq i_1<\ldots<i_r\leq rd}(v_{i_1,\ldots,i_r}))=\otimes_{1\leq i_1<\ldots<i_r\leq rd}(T(v_{i_1,\ldots,i_r})).\]
This action leads to the following result, which has a similar proof as Proposition \ref{InvarProps}.
\begin{lemma}\label{SrSLInvar}
    Let $r\geq1$, $T\in GL_d(k)$ and $v_{i_1,i_2,\ldots,i_r}\in V_d$ for $1\leq i_1<\ldots<i_r\leq rd$. Then,
    \begin{equation}\label{SrGLRel}
        det^{S^r}(\otimes_{1\leq i_1<\ldots<i_r\leq rd}(T(v_{i_1,i_2,\ldots,i_r})))=det(T)^{\binom{rd-1}{r-1}}det^{S^r}(\otimes_{1\leq i_1<\ldots<i_r\leq rd}(v_{i_1,i_2,\ldots,i_r})).
    \end{equation}
    In particular, $det^{S^r}$ is invariant under the action of $SL_d(k)$.
\end{lemma}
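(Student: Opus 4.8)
The plan is to mimic the proof of Proposition \ref{InvarProps} almost verbatim, since the only structural fact used there is that the matrix $A_{rd}$ depends on the vectors $v_{i_1,\dots,i_r}$ linearly through a single column-block for each multi-index, and that applying $T$ to every vector amounts to left-multiplying $A_{rd}$ by a block-diagonal matrix with $T$ repeated on the diagonal. First I would observe that, by the definition of the vector equation $\mathcal{E}_{m_1,\dots,m_{r-1}}$ in Equation (\ref{relEqSr}), each entry of $A_{rd}((v_{i_1,\dots,i_r})_{1\le i_1<\dots<i_r\le rd})$ is either $\mathbf{0}$ or $\pm v_{i_1,\dots,i_r}$ for the multi-index $(i_1,\dots,i_r)$ labeling that column (viewing $A_{rd}$ as a $d\binom{rd-1}{r-1}\times d\binom{rd-1}{r-1}$ matrix whose $d\times d$ blocks are obtained by writing each $v_{i_1,\dots,i_r}$ as a column vector in the basis $\mathcal{B}_d$). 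Because of this, replacing every $v_{i_1,\dots,i_r}$ by $T(v_{i_1,\dots,i_r})$ replaces each such block-column by $T$ applied to it.

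The key step is then the matrix identity
\[
A_{rd}((T(v_{i_1,\dots,i_r}))_{1\le i_1<\dots<i_r\le rd})=I_{r,d,T}\,A_{rd}((v_{i_1,\dots,i_r})_{1\le i_1<\dots<i_r\le rd}),
\]
where $I_{r,d,T}$ is the $d\binom{rd-1}{r-1}\times d\binom{rd-1}{r-1}$ block-diagonal matrix with $T$ in each of the $\binom{rd-1}{r-1}$ diagonal blocks and $\mathbf{0}$ elsewhere, exactly as in Proposition \ref{InvarProps} with $\binom{3d-1}{2}$ replaced by $\binom{rd-1}{r-1}$. To justify this, I would note that left multiplication by $I_{r,d,T}$ acts on the rows grouped by the equation index $\mathcal{E}_{m_1,\dots,m_{r-1}}$: within the block of $d$ rows corresponding to a fixed equation, $I_{r,d,T}$ applies $T$, and since each such block of rows of $A_{rd}$ is exactly the list of coefficient-vectors $\pm v_{i_1,\dots,i_r}$ occurring in $\mathcal{E}_{m_1,\dots,m_{r-1}}$ (written in coordinates), applying $T$ to them gives precisely the coefficient-vectors of the same equation with each $v$ replaced by $T(v)$. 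Then $\det(I_{r,d,T})=\det(T)^{\binom{rd-1}{r-1}}$ by the block-diagonal determinant formula, and multiplicativity of the determinant gives
\[
\det^{S^r}(\otimes(T(v_{i_1,\dots,i_r})))=\det(I_{r,d,T})\det(A_{rd}((v_{i_1,\dots,i_r})))=\det(T)^{\binom{rd-1}{r-1}}\det^{S^r}(\otimes(v_{i_1,\dots,i_r})),
\]
which is Equation (\ref{SrGLRel}). The case $r=1$ is just the classical transformation law for the determinant, so it can be stated separately or absorbed into the general argument.

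The $SL_d(k)$-invariance is then immediate: if $\det(T)=1$ then $\det(T)^{\binom{rd-1}{r-1}}=1$, so $\det^{S^r}$ is unchanged. I do not anticipate a genuine obstacle here — the argument is a routine adaptation — the only point requiring care is bookkeeping the correspondence between the ordering of rows/columns fixed in Remark \ref{NewMatRemark} and the block structure of $I_{r,d,T}$, so that the block-diagonal form is literally correct rather than merely correct up to a permutation (a permutation would not affect the determinant, but it is cleaner to note that grouping rows by the equation $\mathcal{E}_{m_1,\dots,m_{r-1}}$ and columns by the multi-index $(i_1,\dots,i_r)$, as in the dictionary order, already puts the transformation in exact block-diagonal form).
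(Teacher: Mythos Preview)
Your proposal is correct and follows exactly the same approach as the paper, which simply states that the result ``has a similar proof as Proposition \ref{InvarProps}.'' The block-diagonal matrix $I_{r,d,T}$ with $\binom{rd-1}{r-1}$ copies of $T$, the identity $A_{rd}((T(v_{i_1,\dots,i_r})))=I_{r,d,T}\,A_{rd}((v_{i_1,\dots,i_r}))$, and the determinant computation are precisely the intended argument.
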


\begin{remark}
    Since $det^{S^r}$ is multilinear and $SL_d(k)$ invariant, it would be interesting to see $det^{S^r}(\otimes_{1\leq i_1<\ldots<i_r\leq rd}(v_{i_1,i_2,\ldots,i_r}))$ written as a sum of products of determinants of $d\times d$ matrices whose columns are give by $v_{i_1,i_2,\dots, i_r}$. 
\end{remark}

\subsection{The $d$-partitions of $K_{rd}^r$ that have zero Betti Numbers}

\label{SectionCombSR}
Next, we investigate a combinatorial property of the map $det^{S^r}$. The results are presented in parallel with those for the case $r=3$ from Section \ref{SectionCombS3}.

We will show that if $(\mathcal{H}_1,\mathcal{H}_2,\dots, \mathcal{H}_d)$ is a $d$-partition of the complete hypergraph $K_{rd}^r$ then $det^{S^r}(\omega_{\mathcal{H}_1,\mathcal{H}_2,\dots, \mathcal{H}_d})\neq 0$ if and only if $(\mathcal{H}_1,\mathcal{H}_2,\dots, \mathcal{H}_d)$ is pre-homogeneous and the 
$b_{r-1}(\mathcal{H}_i)=0$ for all $1\leq i\leq d$.

Just like in Section \ref{SectionCombS3}, take $\{l_1,l_2,\dots,l_n\}$ to be a basis for $\mathbb{R}^n$.  To a sub-hypergraph $\mathcal{H}$ of $K_{n}^r$ one can associate an $(r-1)$-dimensional CW complex $X(\mathcal{H})\subseteq \mathbb{R}^n$, where for each $0\leq s\leq r$ we take
\begin{eqnarray*}
X_{s-1}&=&\{t_1l_{a_1}+t_2l_{a_2}+\dots +t_{s}l_{a_s}|\; t_1,t_2,\dots,t_s\in [0,1],~t_1+t_2+\dots+t_s=1,~{\rm and }~ \\
&&\{a_1,a_2,\dots,a_s\}\in E_s(\mathcal{H})\}.
\end{eqnarray*}

\begin{definition} Let  $\mathcal{H}$ be a sub-hypergraph of $K_{n}^r$. We define the Betti numbers of $\mathcal{H}$   as $b_i(\mathcal{H})=b_i(X(\mathcal{H}))$ (i.e. the  reduced Betti numbers of its corresponding CW complex $X(\mathcal{H})$). 
\end{definition}
\begin{remark} Notice that if $\mathcal{H}$ is a sub-hypergraph of $K_{n}^r$ then the CW complex $X(\mathcal{H})$ has dimension $r-1$, and so $b_{i}(\mathcal{H})=0$ for all $i\geq r$. 
\end{remark}

\begin{definition}
We say that a $d$-partition $(\mathcal{H}_1,\mathcal{H}_2,\dots, \mathcal{H}_d)$ of $K_{rd}^r$ is pre-homogeneous if  $$\vert E_k(\mathcal{H}_i)\vert=\binom{rd}{k}$$ for all $1\leq i\leq d$  and $1\leq k\leq r-1$.

We say that a pre-homogeneous  $d$-partition $(\mathcal{H}_1,\mathcal{H}_2,\dots, \mathcal{H}_d)$ of $K_{rd}^r$ is homogeneous if  for all $1\leq i\leq d$ we have $$\vert E_r(\mathcal{H}_i)\vert=\binom{rd-1}{r-1}=\frac{1}{d}\binom{rd}{r}.$$
\end{definition}

\begin{remark}
Notice that $\vert E_k(K_{rd}^r)\vert=\binom{rd}{k}$ for all $1\leq k\leq r-1$,  and $\vert E_r(K_{rd}^r)\vert=d\binom{rd-1}{r-1}$. So, in a homogeneous $d$-partition of $K_{rd}^r$, each  $\mathcal{H}_i$ will have the same $r-2$-skeleton  as $K_{rd}^r$, and the hyperedges of $K_{rd}^r$ will be divided evenly among the hypergraphs $\mathcal{H}_i$. 
\end{remark}

\begin{lemma} Let $(\mathcal{H}_1,\mathcal{H}_2,\dots, \mathcal{H}_d)$ be a $d$-partition of $K_{rd}^r$ that is not pre-homogeneous, then $det^{S^r}(\omega_{(\mathcal{H}_1,\mathcal{H}_2,\dots, \mathcal{H}_d)})=0$. \label{lemmaPreH1R} 
\end{lemma}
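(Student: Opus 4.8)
The strategy is to mimic the proof of Lemma \ref{lemmaPreH1} (the case $r=3$) verbatim, replacing the pair of subscripts $(m,n)$ by the $(r-1)$-tuple $(m_1,\dots,m_{r-1})$ and the role of the equation $\mathcal{E}_{m,n}$ by $\mathcal{E}_{m_1,\dots,m_{r-1}}$. First I would unpack the hypothesis: if $(\mathcal{H}_1,\dots,\mathcal{H}_d)$ is not pre-homogeneous, then there is some $1\leq k\leq r-1$, some $1\leq i\leq d$, and some $k$-subset $\{m_1,\dots,m_k\}$ with $\{m_1,\dots,m_k\}\notin E_k(\mathcal{H}_i)$; and by downward induction on $k$ (if a $k$-set is missing from $E_k(\mathcal{H}_i)$ then every $(k{+}1)$-superset lies outside $E_{k+1}(\mathcal{H}_i)$, exactly as in the last paragraph of the proof of Lemma \ref{lemmaPreH1}) one reduces to the case $k=r-1$: there exist $1\leq m_1<\dots<m_{r-1}\leq rd$ and $1\leq i\leq d$ with $\{m_1,\dots,m_{r-1}\}\notin E_{r-1}(\mathcal{H}_i)$.

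Next I would look at the vector equation $\mathcal{E}_{m_1,\dots,m_{r-1}}$ from Equation (\ref{relEqSr}). Every vector appearing in it is of the form $v_{j_1,\dots,j_r}$ where $\{j_1,\dots,j_r\}$ is an $r$-set containing $\{m_1,\dots,m_{r-1}\}$; since $\{m_1,\dots,m_{r-1}\}\notin E_{r-1}(\mathcal{H}_i)$, no such hyperedge lies in $\mathcal{H}_i$, hence none of these vectors equals $e_i$. Writing $M_{m_1,\dots,m_{r-1}}(\omega_{(\mathcal{H}_1,\dots,\mathcal{H}_d)})$ for the block of rows of the associated matrix coming from $\mathcal{E}_{m_1,\dots,m_{r-1}}$ (the $d$ rows indexed by the basis vectors $e_1,\dots,e_d$), this means the $i$-th row of that block is identically zero.

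Finally I would split into two cases according to whether $m_{r-1}=rd$. If $m_{r-1}<rd$, then the block $M_{m_1,\dots,m_{r-1}}$ is literally one of the row-blocks of $A_{rd}(\omega_{(\mathcal{H}_1,\dots,\mathcal{H}_d)})$ (the matrix from Definition \ref{mainSrDefs}), so $A_{rd}$ has a zero row and $det^{S^r}(\omega_{(\mathcal{H}_1,\dots,\mathcal{H}_d)})=\det(A_{rd})=0$. If $m_{r-1}=rd$, then $\mathcal{E}_{m_1,\dots,m_{r-1}}$ itself does not appear in $A_{rd}$, but by the relation $\mathcal{R}_{m_1,\dots,m_{r-2}}$ of Equation (\ref{Srrelations}) the equation $\mathcal{E}_{m_1,\dots,m_{r-2},rd}$ is a signed sum of the equations $\mathcal{E}_{n_1,\dots,n_{r-1}}$ with $n_{r-1}<rd$ that do occur in $A_{rd}$; restricting this relation to the $i$-th coordinate (using that the $i$-th row of $\mathcal{E}_{m_1,\dots,m_{r-2},rd}$ is zero) exhibits a nontrivial linear dependence among the $i$-th rows of those row-blocks of $A_{rd}$, so again $\det(A_{rd})=0$ and $det^{S^r}(\omega_{(\mathcal{H}_1,\dots,\mathcal{H}_d)})=0$.

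The only mildly delicate point — and the place to be careful — is the bookkeeping in the $m_{r-1}=rd$ case: one must check that in the relation $\mathcal{R}_{m_1,\dots,m_{r-2}}$ the equations other than $\mathcal{E}_{m_1,\dots,m_{r-2},rd}$ all have last index strictly less than $rd$, so that they genuinely live in $A_{rd}$, and that the coefficient of $\mathcal{E}_{m_1,\dots,m_{r-2},rd}$ in $\mathcal{R}_{m_1,\dots,m_{r-2}}$ is $\pm1$ (nonzero), which is immediate from Equation (\ref{Srrelations}). Everything else is a routine transcription of the $r=3$ argument, so I would keep the write-up short and refer back to the proof of Lemma \ref{lemmaPreH1} for the details.
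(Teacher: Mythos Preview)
Your proposal is correct and follows exactly the approach the paper intends: the paper's own proof simply reads ``The proof is similar with that for Lemma \ref{lemmaPreH1}'', and your write-up is precisely the straightforward transcription of that $r=3$ argument to general $r$, with the same case split on whether $m_{r-1}=rd$ and the same use of the relation $\mathcal{R}_{m_1,\dots,m_{r-2}}$ from Equation (\ref{Srrelations}) in the boundary case. The only quibble is terminological: your reduction from a missing $k$-set to a missing $(r-1)$-set is an \emph{upward} induction on $k$ (each missing $k$-set forces a missing $(k{+}1)$-superset), not a downward one, but the substance is right.
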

\begin{proof} 
The proof is similar with that for Lemma \ref{lemmaPreH1} 
\end{proof}

\begin{remark}  Let  $(\mathcal{H}_1,\mathcal{H}_2,\dots, \mathcal{H}_d)$ be a pre-homogeneous $d$-partition of $K_{rd}^r$.   We will denote by $\mathcal{K}_i$ the reduced-homology complex associated to the CW complex  $X(\mathcal{H}_i)$. More precisely, for all $1\leq i\leq d$ and $0\leq k\leq r-2$ we  denote 
\begin{itemize}
\item for $0\leq k\leq r-1$ we denote the generators in degree  $k-1$  by  $f_i^{a_1,a_2,\dots,a_k}$ for $1\leq a_1<a_2<\dots<a_k\leq rd$,
\item we denote the generators in degree $r-1$ by $f_{i}^{a_1,a_2,\dots,a_r}$ for $1\leq a_1<a_2<\dots<a_r\leq rd$ such that $\{a_1,a_2,\dots,a_r\}\in E_{r}(\mathcal{H}_i)$.
\end{itemize} With this notation the complex $\mathcal{K}_i$ is given by
\[ 0\to \bigoplus_{\{a_1,\dots,a_r\}\in E_{r}(\mathcal{H}_i)}kf_{i}^{a_1,\dots,a_r}\stackrel{\partial_{r-1}^i}{\longrightarrow} \bigoplus_{1\leq a_1<\dots<a_{r-1}\leq rd}kf_{i}^{a_1,\dots,a_{r-1}} \stackrel{\partial_{r-2}^i}{\longrightarrow} \dots \bigoplus_{1\leq a_1\leq rd}kf_{i}^{a_1}\stackrel{\partial_0^i}{\longrightarrow} kf_{i}^{\emptyset}\to 0\]
where  $$\partial_k^i(f_{i}^{a_1,a_2,\dots,a_k})=f_{i}^{a_2,a_3,\dots,a_k}-f_{i}^{a_1,a_3,\dots,a_k}+\dots+
(-1)^{k-1}f_{i}^{a_1,a_2,\dots,a_{k-1}}.$$   
\end{remark}

\begin{lemma}  \label{lemmaPreH2R} Let  $(\mathcal{H}_1,\mathcal{H}_2,\dots, \mathcal{H}_d)$ be a pre-homogeneous 
$d$-partition of $K_{rd}^r$. Then  $$rank (\bigoplus_{1\leq i\leq d}\partial_{r-1}^i)=rank (A(\omega_{(\mathcal{H}_1,\mathcal{H}_2,\dots, \mathcal{H}_d)})).$$
\end{lemma}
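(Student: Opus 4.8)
The plan is to mimic the proof of Lemma \ref{lemmaPreH2} essentially verbatim, showing that up to permutation of rows and columns and sign changes on individual rows and columns, the matrix $A(\omega_{(\mathcal{H}_1,\dots,\mathcal{H}_d)})$ coincides with the matrix of $\bigoplus_{1\leq i\leq d}\partial_{r-1}^i$. First I would verify that the two matrices have the same shape: since the partition is pre-homogeneous, $\dim_k\bigl(\bigoplus_i\bigoplus_{1\leq a_1<\dots<a_{r-1}\leq rd}kf_i^{a_1,\dots,a_{r-1}}\bigr)=d\binom{rd}{r-1}$ rows and $\dim_k\bigl(\bigoplus_i\bigoplus_{\{a_1,\dots,a_r\}\in E_r(\mathcal{H}_i)}kf_i^{a_1,\dots,a_r}\bigr)=\binom{rd}{r}$ columns, which match the dimensions of $A(\omega_{(\mathcal{H}_1,\dots,\mathcal{H}_d)})$ (the full system $\mathcal{S}$, not the truncated $\mathcal{S}_{rd}$, so the row count is $d\binom{rd}{r-1}$ indexed by all $(m_1,\dots,m_{r-1})$). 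Then I would identify the index sets: columns on both sides are naturally indexed by $r$-subsets $\{a_1,\dots,a_r\}$ of $\{1,\dots,rd\}$ (with the column on the $\partial$ side living in the $\mathcal{H}_j$-block where $\{a_1,\dots,a_r\}\in E_r(\mathcal{H}_j)$), and rows on both sides are indexed by pairs consisting of an $(r-1)$-subset $\{m_1,\dots,m_{r-1}\}$ together with a block index $1\leq i\leq d$. A permutation of rows and columns aligns the two orderings.

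Next I would compare entries. In $A(\omega_{(\mathcal{H}_1,\dots,\mathcal{H}_d)})$, by the definition of $\mathcal{E}_{m_1,\dots,m_{r-1}}$ in Equation (\ref{relEqSr}), the column $\mathcal{C}_{a_1,\dots,a_r}$ has a nonzero entry exactly in the $r$ rows obtained by deleting one of $a_1,\dots,a_r$; if we delete $a_\ell$, the entry is $\pm v_{a_1,\dots,a_r}$ with a sign depending only on $\ell$ and on the positions $a_1,\dots,a_r$ (reading off the signs $(-1)^{s-1}$, $(-1)^s$, $(-1)^{(s-1)+(r-2)}$, $(-1)^{(s-1)+(r-1)}$ in (\ref{relEqSr})). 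On the boundary-map side, $\partial_{r-1}^i(f_i^{a_1,\dots,a_r})=\sum_{\ell=1}^r(-1)^{\ell-1}f_i^{a_1,\dots,\widehat{a_\ell},\dots,a_r}$, so the matrix of $\bigoplus_i\partial_{r-1}^i$ has in column $\{a_1,\dots,a_r\}$ (in block $j$, where $\{a_1,\dots,a_r\}\in E_r(\mathcal{H}_j)$) the entry $(-1)^{\ell-1}$ in the row $(\{a_1,\dots,\widehat{a_\ell},\dots,a_r\},j)$ and zero elsewhere. I would then exhibit, as in Lemma \ref{lemmaPreH2}, a choice of signs $\epsilon_{\mathcal{C}}$ for each column $\mathcal{C}_{a_1,\dots,a_r}$ and $\delta_{\mathcal{E}}$ for each row $\mathcal{E}_{m_1,\dots,m_{r-1}}$ (products of signs of the form $(-1)^{m_1+m_2+\dots}$ and $(-1)^{a_1+\dots+a_r}$) such that after these rescalings every nonzero entry of $A$ becomes exactly $(-1)^{\ell-1}$ for the row obtained by deleting the $\ell$-th index; since $v_{a_1,\dots,a_r}=e_j$ precisely when $\{a_1,\dots,a_r\}\in E_r(\mathcal{H}_j)$, this rescaled matrix is literally the matrix of $\bigoplus_i\partial_{r-1}^i$ after the row/column permutation. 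Rescaling rows and columns by nonzero scalars and permuting them does not change the rank, which yields $\operatorname{rank}\bigl(\bigoplus_{1\leq i\leq d}\partial_{r-1}^i\bigr)=\operatorname{rank}\bigl(A(\omega_{(\mathcal{H}_1,\dots,\mathcal{H}_d)})\bigr)$.

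The main obstacle is the bookkeeping of signs: in the case $r=3$ the paper could display all the relevant rows explicitly (the $\mathcal{E}_{x,y},\mathcal{E}_{x,z},\mathcal{E}_{y,z}$ submatrix) and check by hand that multiplying $\mathcal{C}_{x,y,z}$ by $(-1)^{x+y+z+1}$ and $\mathcal{E}_{x,y}$ by $(-1)^{x+y}$ does the job, but for general $r$ one must argue uniformly that the sign attached in (\ref{relEqSr}) to the occurrence of $\lambda_{a_1,\dots,a_r}$ in $\mathcal{E}_{a_1,\dots,\widehat{a_\ell},\dots,a_r}$ differs from $(-1)^{\ell-1}$ by a factor that factors as $(\text{a function of }\{a_1,\dots,a_r\})\cdot(\text{a function of }\{a_1,\dots,\widehat{a_\ell},\dots,a_r\})$. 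Concretely one checks that the sign in (\ref{relEqSr}) for the term where the "inserted" index $s$ sits in position $p$ among $m_1<\dots<m_{r-1}<s<\dots$ is $(-1)^{(s-1)+(p-1)}$; writing $s=a_\ell$ and noting $p-1$ equals the number of $a_t<a_\ell$, i.e. $p=\ell$, one gets sign $(-1)^{a_\ell-1+\ell-1}$, so dividing by the desired $(-1)^{\ell-1}$ leaves $(-1)^{a_\ell}$, which is absorbed by a column scaling $(-1)^{a_1+\dots+a_r}$ on $\mathcal{C}_{a_1,\dots,a_r}$ together with a compensating row scaling $(-1)^{m_1+\dots+m_{r-1}}$ on $\mathcal{E}_{m_1,\dots,m_{r-1}}$, since $a_1+\dots+a_r - (m_1+\dots+m_{r-1})=a_\ell$. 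This is routine but must be done carefully; everything else is immediate from pre-homogeneity and Remark \ref{NewMatRemark}.
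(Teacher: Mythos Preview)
Your approach is correct and is exactly what the paper intends: its proof of Lemma~\ref{lemmaPreH2R} consists of the single line ``similar to that for Lemma~\ref{lemmaPreH2},'' and you have carried out precisely that program for general $r$, including the explicit sign analysis that the paper omits.

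One minor bookkeeping point: the quotient of the sign $(-1)^{(a_\ell-1)+(\ell-1)}$ in~(\ref{relEqSr}) by the target sign $(-1)^{\ell-1}$ is $(-1)^{a_\ell-1}$, not $(-1)^{a_\ell}$. Correspondingly, the column scaling should be $(-1)^{a_1+\cdots+a_r+1}$ (matching the $(-1)^{x+y+z+1}$ used in the $r=3$ proof) rather than $(-1)^{a_1+\cdots+a_r}$; with your scalings the resulting entry is $-(-1)^{\ell-1}$. This is harmless for the conclusion, since a global sign on all columns does not affect rank, but you may want to adjust the constants so the two matrices literally coincide.
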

\begin{proof}
The proof is similar to that for Lemma \ref{lemmaPreH2}.
\end{proof}

\begin{theorem} \label{ThCombR} Let $(\mathcal{H}_1,\mathcal{H}_2,\dots, \mathcal{H}_d)$ be a $d$-partition of $K_{rd}^r$. The following are equivalent
\begin{enumerate} 
\item $det^{S^r}(\omega_{(\mathcal{H}_1,\mathcal{H}_2,\dots, \mathcal{H}_d)})\neq 0$.
\item  $(\mathcal{H}_1,\mathcal{H}_2,\dots, \mathcal{H}_d)$ is pre-homogeneous, and  $b_k(\mathcal{H}_i)=0$ for every $1\leq i\leq d$, and $-1\leq k\leq r-1$.
\item  $(\mathcal{H}_1,\mathcal{H}_2,\dots, \mathcal{H}_d)$ is pre-homogeneous, and  $b_{r-1}(\mathcal{H}_i)=0$ for every $1\leq i\leq d$.
\end{enumerate}
\end{theorem}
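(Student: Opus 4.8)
The plan is to mimic the proof of Theorem \ref{ThComb} verbatim, with $3$ replaced everywhere by $r$ and the degree-$2$ homology replaced by degree-$(r-1)$ homology. First I would observe that $(3)\Rightarrow(2)$ and $(2)\Rightarrow(1)$ are the easy implications, so the substance is $(1)\Rightarrow(2)$ and the converse $(2)\Rightarrow(1)$ (equivalently $(3)\Rightarrow(1)$), and these are handled exactly as before. Starting from $det^{S^r}(\omega_{(\mathcal{H}_1,\dots,\mathcal{H}_d)})\neq 0$, Lemma \ref{lemmaPreH1R} forces the partition to be pre-homogeneous, whence for each $i$ the $(r-2)$-skeleton of $X(\mathcal{H}_i)$ coincides with that of $\Delta_{rd-1}$; since that skeleton is $(r-2)$-connected (it is the full skeleton of a simplex), we immediately get $b_k(\mathcal{H}_i)=0$ for $-1\leq k\leq r-2$. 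Next, Lemma \ref{lemmaPreH2R} identifies $rank\bigl(\bigoplus_{1\leq i\leq d}\partial_{r-1}^i\bigr)$ with $rank\bigl(A(\omega_{(\mathcal{H}_1,\dots,\mathcal{H}_d)})\bigr)$; since $A_{rd}$ is square of size $d\binom{rd-1}{r-1}$ and $\bigoplus_i(\bigoplus_{\{a_1,\dots,a_r\}\in E_r(\mathcal{H}_i)}kf_i^{a_1,\dots,a_r})$ has dimension $\sum_i|E_r(\mathcal{H}_i)|=|E_r(K_{rd}^r)|=d\binom{rd-1}{r-1}$, the nonvanishing of $det^{S^r}$ says this map has full rank, i.e. is injective, hence each $\partial_{r-1}^i$ is injective and $b_{r-1}(\mathcal{H}_i)=0$ for all $i$.

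For the remaining statement in $(2)$, that $b_k(\mathcal{H}_i)=0$ also for the middle range is already covered except we must still rule out the possibility implicit in counting: here we instead argue as in Theorem \ref{ThComb} via the Euler characteristic. Form the total complex $\mathcal{K}=\bigoplus_{1\leq i\leq d}\mathcal{K}_i$, which is precisely the (reduced) chain complex of $X(K_{rd}^r)$ truncated in degree $r-1$; its Euler characteristic is
\[
\chi_{\mathcal{K}}=\sum_{k=0}^{r-1}(-1)^{k-1}\,d\binom{rd}{k}\;+\;(-1)^{r-1}\cdot d\binom{rd-1}{r-1}
= -d\Bigl(\sum_{k=0}^{r-1}(-1)^{k}\binom{rd}{k}+\frac{(-1)^{r}}{d}\binom{rd}{r}\Bigr),
\]
which vanishes by Lemma \ref{lemmaCombrd}. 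On the other hand $\chi_{\mathcal{K}}=\sum_{i=1}^d\sum_{k=-1}^{r-1}(-1)^{k}\,b_k(\mathcal{H}_i)$ — note the sign convention in this paper makes all listed Betti numbers enter with the same sign, as in Section \ref{SectionCombS3} — and having already established $b_k(\mathcal{H}_i)=0$ for $k\neq$ the one remaining degree, nonnegativity of Betti numbers forces that degree's contribution to vanish too, giving $b_k(\mathcal{H}_i)=0$ for all $-1\leq k\leq r-1$ and all $i$. (One must be a little careful that for $r\geq 4$ there may be more than one "middle" degree $1\leq k\leq r-2$; but all of those already vanish by $(r-2)$-connectivity of the common skeleton, so in fact only $k=r-1$ is at issue, and that was handled above — the Euler-characteristic step is then what the converse direction needs.)

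Conversely, assume $(\mathcal{H}_1,\dots,\mathcal{H}_d)$ is pre-homogeneous with $b_{r-1}(\mathcal{H}_i)=0$ for all $i$. Pre-homogeneity lets us compute $b_{r-1}(\mathcal{H}_i)$ from the two-term complex $0\to\bigoplus_{\{a_1,\dots,a_r\}\in E_r(\mathcal{H}_i)}kf_i^{a_1,\dots,a_r}\xrightarrow{\partial_{r-1}^i}\bigoplus_{1\leq a_1<\dots<a_{r-1}\leq rd}kf_i^{a_1,\dots,a_{r-1}}$, so $b_{r-1}(\mathcal{H}_i)=0$ means $\partial_{r-1}^i$ is injective; hence $\bigoplus_i\partial_{r-1}^i$ is injective and, by Lemma \ref{lemmaPreH2R}, $rank\bigl(A(\omega_{(\mathcal{H}_1,\dots,\mathcal{H}_d)})\bigr)=d\binom{rd-1}{r-1}$ is full, so $det^{S^r}(\omega_{(\mathcal{H}_1,\dots,\mathcal{H}_d)})\neq 0$, proving $(3)\Rightarrow(1)$. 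The only genuinely delicate point — and the place I expect the write-up to need the most care — is the bookkeeping in Lemma \ref{lemmaPreH2R}: one must check that after the sign renormalizations (multiplying column $\mathcal{C}_{a_1,\dots,a_r}$ by $(-1)^{a_1+\dots+a_r+1}$ and the block of rows for $\mathcal{E}_{a_1,\dots,a_{r-1}}$ by a suitable sign) the matrix of $A_{rd}$ coincides entry-for-entry with the matrix of $\bigoplus_i\partial_{r-1}^i$, the signs in Equation (\ref{relEqSr}) being precisely arranged so that the incidence coefficients $\pm1$ come out as the simplicial boundary signs $(-1)^{j-1}$; everything else is a direct transcription of the $r=3$ argument.
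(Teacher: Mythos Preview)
Your approach is the paper's approach, and the overall architecture is right; but there is one substantive error. You assert that the $(r-2)$-skeleton of $\Delta_{rd-1}$ is ``$(r-2)$-connected'' and conclude from this that $b_k(\mathcal{H}_i)=0$ for $-1\le k\le r-2$. That is false: the $(r-2)$-skeleton of a simplex of dimension $\ge r-1$ is homotopy equivalent to a wedge of $(r-2)$-spheres, so its $\tilde H_{r-2}$ is nonzero. What the skeleton coincidence actually buys you is only $b_k(\mathcal{H}_i)=0$ for $-1\le k\le r-3$. (Compare the $r=3$ case in Theorem \ref{ThComb}: the common $1$-skeleton being that of a simplex gives $b_{-1}=b_0=0$, but \emph{not} $b_1=0$.)

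Consequently the Euler-characteristic step is not relegated to the converse direction as your parenthetical suggests: it is precisely what is needed in $(1)\Rightarrow(2)$ to kill the remaining degree $b_{r-2}$. Once you have $b_k=0$ for $-1\le k\le r-3$ (from the skeleton) and $b_{r-1}=0$ (injectivity of $\bigoplus_i\partial_{r-1}^i$ via Lemma \ref{lemmaPreH2R}), the identity $\chi_{\mathcal K}=0$ from Lemma \ref{lemmaCombrd} forces $\sum_i b_{r-2}(\mathcal H_i)=0$, hence each $b_{r-2}(\mathcal H_i)=0$. With this correction your write-up matches the paper's argument exactly. (Also drop the remark that ``all listed Betti numbers enter with the same sign''; they alternate as usual, but since only one term survives the sign is immaterial.)
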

\begin{proof} 
If  $det^{S^r}(\omega_{(\mathcal{H}_1,\mathcal{H}_2,\dots \mathcal{H}_d)})\neq 0$, then by Lemma \ref{lemmaPreH1R} we know that the partition $(\mathcal{H}_1,\mathcal{H}_2,\dots, \mathcal{H}_d)$  must be pre-homogeneous. In particular it follows that  for each $1\leq i\leq d$ the $r-2$-skeleton of $X(\mathcal{H}_i)$ coincide with the $r-2$-skeleton of $\Delta_{rd-1}$, and so we get that $b_{k-1}(\mathcal{H}_i)=0$ for all $0\leq k\leq r-2$ and $1\leq i\leq d$. 

Since $(\mathcal{H}_1,\mathcal{H}_2,\dots, \mathcal{H}_d)$  is pre-homogeneous, to compute the homology of $X(\mathcal{H}_i)$ we can use the complex $\mathcal{K}_i$.

Moreover, because  $det^{S^3}(\omega_{(\mathcal{H}_1,\mathcal{H}_2,\dots, \mathcal{H}_d)})\neq 0$, it follows by Lemma \ref{lemmaPreH2R}
that $$rank (\bigoplus_{1\leq i\leq d}\partial_{r-1}^i)=rank (A(\omega_{(\mathcal{H}_1,\mathcal{H}_2,\dots, \mathcal{H}_d)}))=d\binom{rd-1}{r-1},$$ in other words the map $\bigoplus_{1\leq i\leq d}\partial_{r-1}^i$ is one to one.  This implies that each map $$\partial_{r-1}^i:\bigoplus_{\{a_1,\dots,a_r\}\in E_{r}(\mathcal{H}_i)}kf_{i}^{a_1,\dots,a_r}
\to \bigoplus_{1\leq a_1<\dots<a_{r-1}\leq rd}kf_{i}^{a_1,\dots,a_{r-1}}$$ is one to one, in particular we get that $b_{r-1}(\mathcal{H}_i)=0$. 

Finally, notice that if we take $\mathcal{K}$ the direct sum $\bigoplus_{1\leq i\leq d}(\mathcal{K}_i)$ we get that the 
 Euler characteristic is 
\begin{eqnarray*}
\chi_{\mathcal{K}}&=&d(\sum_{k=0}^{r-1}(-1)^k\binom{rd}{k})+(-1)^{r}\binom{rd}{r},
\end{eqnarray*}
and so, by Lemma \ref{lemmaCombrd}, we have  $\chi_{\mathcal{K}}=0$. 

On the other hand, we can use Betti numbers to compute 
$$\chi_{\mathcal{K}}=\sum_{i=1}^d(\sum_{k=0}^{r}(-1)^kb_{k-1}(\mathcal{H}_i))).$$ 
We already proved that  $b_{k-1}(\mathcal{H}_i)=0$ for all $1\leq i\leq d$ and $k\neq r-1$ (i.e. $k-1\neq r-2$), and so we get $$\sum_{i=1}^db_{r-2}(\mathcal{H}_i)=0.$$ Since the Betti numbers are positive we have have $b_{r-2}(\mathcal{H}_i)=0$ for all $1\leq i\leq d$. 

The converse is similar to the proof for Theorem \ref{ThComb}. 
\end{proof}

\begin{corollary} Let $(\mathcal{H}_1,\mathcal{H}_2,\dots, \mathcal{H}_d)$ be a $d$-partition of $K_{rd}^r$ such that  $det^{S^r}(\omega_{(\mathcal{H}_1,\mathcal{H}_2,\dots, \mathcal{H}_d)})\neq 0$. Then $(\mathcal{H}_1,\mathcal{H}_2,\dots, \mathcal{H}_d)$ is a homogeneous $d$-partition.
\end{corollary}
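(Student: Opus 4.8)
The plan is to repeat, now for general $r$, the argument that established the corresponding corollary after Theorem \ref{ThComb} for $r=3$, invoking Theorem \ref{ThCombR} and Lemma \ref{lemmaCombrd} in place of their $r=3$ special cases.

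First I would feed the hypothesis $det^{S^r}(\omega_{(\mathcal{H}_1,\mathcal{H}_2,\dots,\mathcal{H}_d)})\neq 0$ into Theorem \ref{ThCombR}. Its condition (2) then tells us that $(\mathcal{H}_1,\mathcal{H}_2,\dots,\mathcal{H}_d)$ is pre-homogeneous and that $b_k(\mathcal{H}_i)=0$ for all $1\leq i\leq d$ and all $-1\leq k\leq r-1$. Combined with the observation that $b_k(\mathcal{H}_i)=0$ automatically for $k\geq r$ (since the CW complex $X(\mathcal{H}_i)$ has dimension $r-1$), this says each reduced complex $\mathcal{K}_i$ is acyclic, so its Euler characteristic vanishes: $\chi_{\mathcal{K}_i}=0$ for every $i$.

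Next I would compute $\chi_{\mathcal{K}_i}$ from the sizes of the chain groups of $\mathcal{K}_i$. Since the partition is pre-homogeneous, $|E_k(\mathcal{H}_i)|=\binom{rd}{k}$ for $1\leq k\leq r-1$, while the top chain group has $|E_r(\mathcal{H}_i)|$ generators; exactly as in the $r=3$ case this gives $\chi_{\mathcal{K}_i}=\sum_{k=0}^{r-1}(-1)^k\binom{rd}{k}+(-1)^r|E_r(\mathcal{H}_i)|$. Setting this equal to zero and using Lemma \ref{lemmaCombrd} to rewrite $\sum_{k=0}^{r-1}(-1)^k\binom{rd}{k}=-\frac{(-1)^r}{d}\binom{rd}{r}$, we obtain $|E_r(\mathcal{H}_i)|=\frac{1}{d}\binom{rd}{r}=\binom{rd-1}{r-1}$ for every $i$, which is precisely the condition defining a homogeneous $d$-partition. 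This completes the proof.

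I do not expect a genuine obstacle: the only point needing care is the bookkeeping of degrees and signs in the reduced complex $\mathcal{K}_i$ (that the $\binom{rd}{k}$ generators $f_i^{a_1,\dots,a_k}$ sit in degree $k-1$ and contribute to $\chi_{\mathcal{K}_i}$ with sign $(-1)^k$ in the paper's convention), so that the equation $\chi_{\mathcal{K}_i}=0$ lines up exactly with the identity of Lemma \ref{lemmaCombrd} and pins $|E_r(\mathcal{H}_i)|$ down to $\frac{1}{d}\binom{rd}{r}$ on the nose.
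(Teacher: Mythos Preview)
Your proposal is correct and follows essentially the same argument as the paper: invoke Theorem \ref{ThCombR} to get pre-homogeneity and vanishing Betti numbers, deduce $\chi_{\mathcal{K}_i}=0$, compute $\chi_{\mathcal{K}_i}$ from the chain groups using pre-homogeneity, and apply Lemma \ref{lemmaCombrd} to solve for $|E_r(\mathcal{H}_i)|=\frac{1}{d}\binom{rd}{r}$.
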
 
\begin{proof} From Theorem  \ref{ThCombR}  we know that  $(\mathcal{H}_1,\mathcal{H}_2,\dots, \mathcal{H}_d)$ is pre-homogeneous and $b_k(\mathcal{H}_i)=0$ for all $1\leq i\leq d$, and $-1\leq k\leq r-1$. In particular we have that the Euler characteristic $\chi_{\mathcal{H}_i}$ is zero. 

Since the partition $(\mathcal{H}_1,\mathcal{H}_2,\dots, \mathcal{H}_d)$  is pre-homogeneous we have that  $|E_k(\mathcal{H}_i)|=\binom{rd}{k}$ for all $0\leq k\leq r-1$. We can compute the Euler characteristics $\chi_{\mathcal{K}_i}$ as 
$$\chi_{\mathcal{K}_i}=\sum_{k=0}^{r-1}(-1)^k\binom{rd}{k}+(-1)^r|E_r(\mathcal{H}_i)|.$$
By Lemma \ref{lemmaCombrd} we get $$|E_r(\mathcal{H}_i)|=\frac{1}{d}\binom{rd}{r}=\binom{rd-1}{r-1},$$ in other words $(\mathcal{H}_1,\mathcal{H}_2,\dots, \mathcal{H}_d)$ is a homogeneous $d$-partition.
\end{proof}

\subsection{Some Remarks}

\begin{remark} As mentioned at the beginning of this section, it is not clear if the map $det^{S^r}$ is always nontrivial. In the Appendix we introduce an element $E_d^{(r)}\in V_d^{\otimes\binom{rd}{r}}$, and using MATLAB we check that in certain particular cases $det^{S^r}(E_d^{(r)})\neq 0$. We believe that this is always the case, but do not have a proof yet.  
\end{remark}
 
\begin{remark} 
The uniqueness (up to a constant) of the linear map $det^{S^r}:V_d^{\otimes\binom{rd}{r}}\to k$ with the property that $det^{S^r}((v_{i_1,\ldots,i_r})_{1\leq i_1<i_2<\ldots<i_r\leq rd})=0$ if there exists $1\leq x_1<x_2<\ldots<x_{r+1}\leq rd$ such that
 $$v_{x_1,x_2,\ldots,x_r}=v_{x_1,\ldots,x_{r-1},x_{r+1}}=v_{x_1,\ldots,x_{r-2},x_r,x_{r+1}}=\ldots=v_{x_1,x_3,\ldots,x_r,x_{r+1}}=v_{x_2,x_3,\ldots,x_{r+1}},$$ 
was conjectured in \cite{sta2} for $r=2$, and in \cite{S3} for general $r$. At this point uniqueness is known only for $(r,d)\in\{(2,2),(2,3),(3,2)\}$. 
\end{remark}

\begin{remark}
The maps $det^{S^r}$ are generalizations of the determinant and have connections to hypergraphs. However, while similar in spirit, they are not related to the  hyperdeterminant construction from \cite{gkz}. 
\end{remark}

\begin{remark} It is well known that the determinant map can be written as 
$$det(\otimes_{1\leq i\leq d}(v_i))=\sum_{\sigma \in S_d}\varepsilon(\sigma)\prod_{s=1}^dv^{s}_{\sigma(s)},$$
where  $S_d$ is the symmetric group, $\varepsilon:S_d\to \{-1,1\}$ is the signature map, and  $v_i=v_i^1e_1+v_i^2e_2+\dots+v_i^de_d\in V_d$. 

One can identify $S_d$ with the set of homogeneous $d$-partitions of the complete $1$-uniform hypergraph $K_d^1$. More precisely, we associate to every permutation  $\sigma\in S_d$ the homogeneous $d$-partitions $(\Pi_1,\Pi_2,\dots,\Pi_d)=(\{\sigma(1)\}, \{\sigma(2)\},\dots, \{\sigma(d)\})\in \mathcal{P}_{d}^{h}(K_{d}^1)$. With this identification, the above formula can be rewritten as
$$det(\otimes_{1\leq i\leq d}(v_i))=\sum_{(\Pi_1,\dots,\Pi_d)\in \mathcal{P}_{d}^{h}(K_{d}^1)}\varepsilon((\Pi_1,\dots,\Pi_d))\prod_{s=1}^d (\prod_{i\in \Pi_s} v_{i}^{s}).$$

It is known from \cite{sta2} that if the map $det^{S^2}$  exists and it is unique (with the above mentioned property), then one can write it as 
$$det^{S^2}(\otimes_{1\leq i<j\leq 2d}(v_{i,j}))=\sum_{(\Gamma_1,\dots,\Gamma_d)\in \mathcal{P}_{d}^{h,cf}(K_{2d}^2)}\varepsilon^{S^2}_d((\Gamma_1,\dots,\Gamma_d))\prod_{s=1}^d (\prod_{(i,j)\in \Gamma_s} v_{i,j}^{s}),$$
where $\mathcal{P}_{d}^{h,cf}(K_{2d}^2)$ is the set of homogeneous cycle free $d$-partition of the complete graph $K_{2d}^2=K_{2d}$, $\varepsilon_d^{S^2}:\mathcal{P}_{d}^{h,cf}(K_{2d}^2)\to \{-1,1\}$, and  $v_{i,j}=v_{i,j}^1e_1+v_{i,j}^2e_2+\dots+v_{i,j}^de_d\in V_d$.  

The analog  statement for $r\geq 3$ is the following. Suppose that there exists a unique (up to a constant) linear map $det^{S^r}:V_d^{\otimes\binom{rd}{r}}\to k$ with property that $det^{S^r}(\otimes_{1\leq i_1<i_2<\ldots<i_r\leq rd}(v_{i_1,\ldots,i_r}))=0$ if there exists $1\leq x_1<x_2<\ldots<x_{r+1}\leq rd$ such that
 $$v_{x_1,x_2,\ldots,x_r}=v_{x_1,\ldots,x_{r-1},x_{r+1}}=v_{x_1,\ldots,x_{r-2},x_r,x_{r+1}}=\ldots=v_{x_1,x_3,\ldots,x_r,x_{r+1}}=v_{x_2,x_3,\ldots,x_{r+1}}.$$ Then 
$$det^{S^r}(\otimes_{1\leq i_1<\dots <i_r\leq rd}(v_{i_1,\dots,i_r}))=\sum_{(\mathcal{H}_1,\dots,\mathcal{H}_d)\in \mathcal{P}_{d}^{h,zB}(K_{rd}^r)}\varepsilon^{S^r}_d((\mathcal{H}_1,\dots,\mathcal{H}_d))\prod_{s=1}^d (\prod_{(i_1,\dots,i_r)\in \mathcal{H}_s} v_{i_1,\dots,i_r}^{s}),$$
where $\mathcal{P}_{d}^{h,zB}(K_{rd})$ is the set of homogeneous $d$-partition of the complete $r$-uniform hypergraph $K_{rd}^r$ that have zero Betti numbers, $\varepsilon_d^{S^r}:\mathcal{P}_{d}^{h,zB}(K_{rd}^r)\to \mathbb{Z}^*$, and  $v_{i_1,\dots,i_r}=v_{i_1,\dots,i_r}^{1}e_1+v_{i_1,\dots,i_r}^{2}e_2+\dots+v_{i_1,\dots,i_r}^{d}e_d\in V_d$.  

Notice that for $r\geq 3$ the  map $\varepsilon_d^{S^r}$ does not take only the values $\pm 1$. For example, it was show in  \cite{S3} that if $d=2$ and  $r=3$ then the map $det^{S^3}:V_2^{\otimes 20}\to k$ exists and it is unique, and the corresponding function is $\varepsilon_2^{S^3}:\mathcal{P}_{2}^{h,zB}(K_{6}^3)\to \{-4,-1,1\}$. 
\end{remark}
\begin{remark}
It is reasonable to ask what is the interpretation of the  map $\varepsilon_d^{S^r}$. For the case $r=2$ this was briefly discussed in \cite{edge}. It is likely that the value of  $\varepsilon^{S^r}_d((\mathcal{H}_1,\dots,\mathcal{H}_d))$ is related to the orders of torsion groups of $X(\mathcal{H}_i)$. This is a problem that we plan to investigate in a future paper. 
\end{remark}

\appendix

\maketitle

\section{Cases for which the map $det^{S^r}$ is nontrivial (Joint work with Tony Passero)}

In  this section we show that the map $det^{S^r}:V_d^{\otimes\binom{rd}{r}}\to k$ is nontrivial for certain values of $r\geq 4$ and $d\geq 2$.  These results were obtained  using MATLAB. First we need to introduce an element $E^{(r)}_d \in V_d^{\otimes\binom{rd}{r}}$. 

For every  $1\leq a\leq d$ take $${\bf S}_a = \{ra - (r-1),ra-(r-2), \dots, ra\}.$$ Let $1 \le i_1 < i_2 < \cdots < i_r \le rd$, for each $1\leq k\leq r$ we take $a_k\{1,2,\dots,d\}$ such that $i_k \in {\bf S}_{a_k}$. Notice that $1\leq a_1\leq a_2\leq \dots \leq a_r\leq d$. Define 
$$e_{i_1,i_2,\dots,i_r}= e_{a_{t}},$$ where $t\in\{1,2,\dots,r\}$ and 
$$t-1 = \sum \limits_{j=1}^r i_j \; (mod \; r).$$ With this notation we define
 $$E^{(r)}_d = \otimes_{1\leq i_1<i_2<\dots <i_r\leq rd}(e_{i_1,i_2,\dots,i_r}) \in V_d^{\otimes \binom{rd}{r}}.$$ 

\begin{example} If we take $r=5$ and $d=6$ we have $e_{1,2,7,14,28}=e_2$. That is because  because $1+2+7+14+28=52=2=3-1\; (mod \; 5)$ (i.e. $t=3$), and $i_3=7\in {\bf S}_{2}$ (i.e. $a_t=a_3=2$). 
Similarly, $e_{26,27,28,29,30}=e_6$. That is because $26+27+28+29+30=0=1-1\; (mod \; 5)$ (i.e. $t=1$), and $i_1=26\in {\bf S}_6$ (i.e. $a_t=a_1=6$).
\end{example}

\begin{proposition} In Table \ref{tablerd} we have a list of $(r,d)$ for which $det^{S^r} (E^{(r)}_d) \neq 0$. In particular, for those values $(r,d)$ we get that $det^{S^r}$ is nontrivial, and so $dim_k(\Lambda_{V_d}^{S^{r}}[rd])\geq 1$. 
\end{proposition}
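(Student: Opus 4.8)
The plan is to reduce the statement to a finite collection of exact, finite-dimensional linear-algebra computations, one for each pair $(r,d)$ appearing in Table \ref{tablerd}. By Definition \ref{mainSrDefs},
\[ det^{S^r}(E^{(r)}_d)=\det\!\left(A_{rd}\big((e_{i_1,\dots,i_r})_{1\leq i_1<\dots<i_r\leq rd}\big)\right), \]
where $A_{rd}$ is the $d\binom{rd-1}{r-1}\times d\binom{rd-1}{r-1}$ matrix of the system $\mathcal{S}_{rd}$ built from the equations $\mathcal{E}_{m_1,\dots,m_{r-1}}$ with $1\leq m_1<\dots<m_{r-1}<rd$. The key point is that, by the very construction of $E^{(r)}_d$, every coefficient $e_{i_1,\dots,i_r}$ is one of the basis vectors $e_1,\dots,e_d$; so expanding each vector equation $\mathcal{E}_{m_1,\dots,m_{r-1}}$ in the coordinates of $\mathcal{B}_d$ turns $A_{rd}$ into an honest scalar matrix $M^{(r)}_d$ with entries in $\{-1,0,1\}$. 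Its rows are indexed by the pairs $\big(\mathcal{E}_{m_1,\dots,m_{r-1}},a\big)$ with $1\leq a\leq d$, its columns by the unknowns $\lambda_{i_1,\dots,i_r}$, and the corresponding entry is the sign prescribed in (\ref{relEqSr}) exactly when $\{m_1,\dots,m_{r-1}\}\subset\{i_1,\dots,i_r\}$ and $e_{i_1,\dots,i_r}=e_a$, and is $0$ otherwise. In particular each column of $M^{(r)}_d$ has at most $r$ nonzero entries, so the matrix is extremely sparse; the first thing I would do is write down this combinatorial recipe for $M^{(r)}_d$ precisely.

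Next I would make the verification rigorous by working modulo a single prime. If, for some prime $p$, the matrix $M^{(r)}_d$ has full rank $d\binom{rd-1}{r-1}$ over $\mathbb{F}_p$, then $p\nmid\det M^{(r)}_d$, hence $\det M^{(r)}_d\neq 0$ in $\mathbb{Z}$, and therefore $det^{S^r}(E^{(r)}_d)=\det M^{(r)}_d\neq 0$ in $k$, since $k$ has characteristic zero. Thus for each $(r,d)$ in Table \ref{tablerd} the task becomes: (i) assemble the sparse integer matrix $M^{(r)}_d$ from the description above and the definition of $E^{(r)}_d$; (ii) reduce it modulo a convenient prime $p$; (iii) compute its rank over $\mathbb{F}_p$ by sparse Gaussian elimination. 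In each listed case this rank is $d\binom{rd-1}{r-1}$, proving $det^{S^r}(E^{(r)}_d)\neq 0$ and hence that $det^{S^r}$ is nontrivial. Finally, by Lemma \ref{SrUniversality} the linear map $det^{S^r}$ vanishes on the generating simple tensors of $\mathcal{E}^{S^r}_{V_d}[rd]$, hence on all of $\mathcal{E}^{S^r}_{V_d}[rd]$ by linearity, so it factors through a nonzero linear functional on $\Lambda^{S^r}_{V_d}[rd]=\mathcal{T}^{S^r}_{V_d}[rd]/\mathcal{E}^{S^r}_{V_d}[rd]$, which yields $dim_k\!\left(\Lambda^{S^r}_{V_d}[rd]\right)\geq 1$.

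The main obstacle is purely computational: for the larger entries of the table (for instance $r=8$, $d=2$, or $r=4$, $d=9$) the matrix $M^{(r)}_d$ has tens of thousands of rows and columns, so a dense rational determinant computation is infeasible and, more to the point, a floating-point rank estimate would not constitute a proof. Both difficulties are resolved by combining exact arithmetic over $\mathbb{F}_p$ with the column-sparsity bound $\leq r$, which keeps fill-in manageable during elimination and keeps each arithmetic step cheap and exact; the one residual subtlety — that an unlucky $p$ could divide $\det M^{(r)}_d$ — is harmless (any single prime certifying full rank suffices) and is in any case guarded against by repeating the computation for a few primes. It would be more satisfying to prove $det^{S^r}(E^{(r)}_d)\neq 0$ uniformly in $(r,d)$ by imitating the block-by-block elimination of the unknowns $\lambda_{i_1,\dots,i_r}$ carried out for $r=3$ in Lemma \ref{nontrivial}, but finding an elimination order that works for general $r$ is precisely the difficulty that this case-by-case computation sidesteps. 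The MATLAB scripts implementing steps (i)--(iii), together with the resulting full-rank certificates for all pairs $(r,d)$ in Table \ref{tablerd}, are what establish the Proposition.
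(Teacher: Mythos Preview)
Your proposal is correct and is essentially the same approach as the paper's: both reduce the claim to a finite computation by assembling the integer matrix $A_{rd}(E^{(r)}_d)$ for each listed $(r,d)$ and verifying it is nonsingular with a computer (the paper uses MATLAB). The one difference is methodological rather than mathematical: the paper simply computes the determinants directly and reports their exact values (all are $\pm 1$, as shown in Table~\ref{tablerd}), whereas you certify nonsingularity via a rank computation over $\mathbb{F}_p$; your route is more careful about numerical rigor for the larger cases, while the paper's route yields the extra information that $det^{S^r}(E^{(r)}_d)\in\{-1,1\}$ for every tabulated pair.
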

\begin{proof} A MATLAB script was created to generate the matrix corresponding to the system of equations $\mathcal{S}(E^{(r)}_d)$ described in \ref{relEqSr}. Using the definition of $det^{S^r}$ we calculated  $det^{S^r}(E^{(r)}_d)$. The values obtained are presented in Table \ref{tablerd}.

\begin{table}[hbt!]
        \centering
        \label{determinant}
        \begin{tabular}{|c|c|c|c|c|c|c|c|c|c|c|c|}
            \cline{3-11}
            \multicolumn{2}{c}{}& \multicolumn{9}{|c|}{$d$}\\
            \cline{3-11}
            \multicolumn{2}{c|}{}& 2 & 3 & 4 & 5 & 6 & 7 & 8 & 9 &10\\
            \hline
            \multirow{7}{*}{$r$} & 2 & -1 & 1 & 1 & 1 & -1 & 1 & 1 & 1 & -1\\
            \cline{2-11}
            & 3 & -1 & -1 & 1 & 1 & 1 & 1 & 1 & 1 & -1\\
            \cline{2-11}
            & 4 & 1 & -1 & 1 & 1 & 1 & -1 & 1 & 1 & -\\
            \cline{2-11}
            & 5 & -1 & -1 & 1 & 1 & - & - & - & - & -\\
            \cline{2-11}
            & 6 & 1 & -1 & - & - & - & - & - & - & -\\
            \cline{2-11}
            & 7 & 1 & - & - & - & - & - & - & - & -\\
            \cline{2-11}
            & 8 & 1 & - & - & - & - & - & - & - & -\\
            \hline
        \end{tabular}
				    \vspace{1em}
				        \caption{Values of $det^{S^r}(E^{(r)}_d)$}\label{tablerd}
\end{table}

\end{proof}

\begin{remark} The empty spots in Table \ref{tablerd} are due to limited computing resources. For example, if $r=5$ and $d=6$ one has to compute the determinant of a square matrix of size \num{142506}. This is more than one can do on a regular laptop. 
\end{remark}

\begin{remark}  We expect that $det^{S^r}(E^{(r)}_d)\neq 0$ for all $r$ and $d$, however we do not have a proof for this statement. One possible approach would be to use the result from Theorem \ref{ThCombR}. It is easy to see that the $d$-partition associated to $E^{(r)}_d$ is homogeneous. In order to get the general result one has to prove that its Betti numbers are zero.  
\end{remark}

\section*{Declarations of interest} 
There is no conflict of interest.



\section*{Acknowledgment}
We thank Ben Ward for comments on an earlier version of this paper. Some of the results from this paper are part of the first author's Ph.D. thesis at Bowling Green State University.

\bibliographystyle{amsalpha}

 \end{document}